\newcommand\markedvertex[1]{node[circle,inner sep=0mm, minimum size=1mm, fill=white,draw]{}}
\tikzset{vertex/.style={circle,inner sep=0mm, minimum size=1mm, fill=black, prefix after command= {\pgfextra{\tikzset{every label/.style={font=\scriptsize}}}}}}
\tikzset{mvertex/.style={circle,inner sep=0mm, minimum size=1mm, fill=white, draw, prefix after command= {\pgfextra{\tikzset{every label/.style={font=\scriptsize}}}}}}
\definecolor{cadmiumgreen}{rgb}{0.0, 0.42, 0.24}
\pgfplotsset{compat=1.13}
\newtheorem{thm}{Theorem}[section]
\newtheorem{prop}[thm]{Proposition}
\newtheorem{cor}[thm]{Corollary}
\newtheorem{lem}[thm]{Lemma}
\newtheorem{lemma}[thm]{Lemma}
\theoremstyle{remark}
\newtheorem{remark}[thm]{Remark}
\newtheorem{example}[thm]{Example}
\theoremstyle{definition}
\newtheorem{definition}[thm]{Definition}
\def\i{\mathrm{i}}
\def\cG{\mathcal{G}}
\def\cM{\mathcal{M}}
\def\cJ{\mathcal{J}}
\def\tR{\mathcal{R}}
\def\cB{\mathcal{B}}
\def\cP{\mathcal{P}}
\def\cX{\mathcal{X}}
\def\cC{\mathcal{C}}
\def\cT{\mathcal{T}}
\def\vv{\mathrm{v}}
\def\cO{\mathcal{O}}
\def\cS{\mathcal{S}}
\def\ee{\mathrm{e}}
\renewcommand{\Im}{\operatorname{Im}}
\newcommand{\R}{\mathbb{R}}
\newcommand{\C}{\mathbb{C}}
\newcommand{\Z}{\mathbb{Z}}
\newcommand{\Q}{\mathbb{Q}}
\newcommand{\inp}[1]{\ensuremath{{\left\langle #1 \right\rangle}}}
\newcommand{\abs}[1]{\ensuremath{{\left\vert #1 \right\vert}}}
\newcommand{\divisor}{\operatorname{div}}
\DeclareMathOperator{\id}{id}
\newcommand{\imto}{\xrightarrow{\sim}}
\newcommand{\pair}[1]{\langle#1\rangle}
\newcommand{\Pic}{\operatorname{Pic}}
\newcommand{\CH}{CH}
\newcommand{\Sp}{\operatorname{Sp}}
\newcommand{\del}{\partial}
\newcommand{\delbar}{\overline{\del}}
\newcommand{\p}{\mathfrak{p}}
\newcommand\xto\xrightarrow
\newcommand\bb\mathbb
\newcommand{\cat}[1]{\mathbf{#1}}
\newcommand{\isoto}{\xrightarrow{\sim}}
\numberwithin{equation}{section}
\subjclass[2010]{
\href{https://mathscinet.ams.org/msc/msc2020.html?t=32G15}{32G15},
\href{https://mathscinet.ams.org/msc/msc2020.html?t=14D23}{14D23},
\href{https://mathscinet.ams.org/msc/msc2020.html?t=14H10}{14H10},
\href{https://mathscinet.ams.org/msc/msc2020.html?t=14H15}{14H15}}
\date{\today}
\begin{document}

\title[Rings of tautological forms on moduli spaces of curves]{Rings of tautological forms on moduli spaces of curves}

\author{Robin de Jong}
\email{\href{mailto:rdejong@math.leidenuniv.nl}{rdejong@math.leidenuniv.nl}}

\author{Stefan van der Lugt}
\email{\href{mailto:math@svdlugt.nl}{math@svdlugt.nl}}

\begin{abstract} We define and study a natural system of tautological rings on the moduli spaces of marked curves at the level of differential forms. We show that certain $2$-forms obtained from the natural normal functions on these moduli spaces are tautological. Also we show that rings of  tautological forms are always  finite dimensional. Finally we characterize the Kawazumi--Zhang invariant as essentially the only smooth function on the moduli space of curves whose Levi form is a tautological form.
\end{abstract}

\dedicatory{To the memory of Bas Edixhoven}
\maketitle

\setcounter{tocdepth}{1}

%\tableofcontents

\thispagestyle{empty}

\section{Introduction} 

In the last four decades, tautological rings on the moduli spaces of curves have been intensively studied, starting with a fundamental paper by Mumford \cite{Mumford1983}. One usually defines these rings as subrings of Chow rings, or of cohomology rings. The aim of the present work is to initiate a study of natural tautological rings at the level of \emph{differential forms}.

\subsection{Tautological rings in Chow and cohomology}

We start by recalling briefly the usual tautological rings. 
Let \(g\geq 2\) be an integer, and let $\cM_g$ be the moduli space of complex connected genus \(g\) curves. 
Let \(p: \cC_g \to \cM_g\) be the universal family. We are interested in $\cM_g$ and the fiber products \(\cC_g^r\) for $r \in \Z_{> 0}$ classifying genus \(g\) curves with $r$ (not necessarily distinct) marked points on them. To each pair of integers \(r,s\geq 0\) and each map of sets \(\phi: \{1,\dots,s\}\to \{1,\dots,r\}\) we have associated a \emph{tautological morphism} 
$ f^\phi: \cC_g^r \to \cC_g^s $
given by sending a marked curve \( (C,x_1,\dots,x_r) \) to the marked curve \( (C,x_{\phi(1)},\dots,x_{\phi(s)}) \). 

In \cite{LooijengaTautological} Looijenga initiated the study of the  \emph{tautological rings} $R^*(\cC_g^r)$ of the moduli spaces of $r$-marked curves. The rings $R^*(\cC_g^r)$ are characterized as the smallest $\Q$-sub-algebras of the Chow rings $\CH^*(\cC_g^r)$ with rational coefficients that are closed under pushforward and pullback along tautological morphisms. At level $r=0$ the system of tautological rings $R^*(\cC_g^r)$ specializes to give the tautological ring $R^*(\cM_g)$ introduced by Mumford \cite{Mumford1983}. Among other things, Looijenga proves in  \cite{LooijengaTautological}  that the tautological rings $R^d(\cC_g^r)$ vanish in degree $d > g-r+2$.

As a variant on the above, one can consider the tautological rings $RH^*(\cC_g^r)$ in \emph{cohomology} with rational coefficients. These rings are obtained as the image of the canonical ring map $ R^*(\cC_g^r) \to H^{2*}(\cC_g^r,\Q)$. Our goal in this paper is to propose a lift of the $\R$-algebras $RH^*(\cC_g^r) \otimes_\Q \R \subset H^*(\cC_g^r,\R)$ to the level of smooth differential forms.  

 \subsection{The rings of tautological forms} \label{def:tautring}

Unfortunately,  the characterization of tautological rings as given above does not  immediately generalize to the setting of differential forms. One issue is that pushforwards of differential forms are  in general not defined. They are defined though for \emph{submersions}, using the process of  \emph{integrating along the fiber}. We therefore decide to restrict pushforwards to the cases where the tautological morphism $f^\phi$ is submersive, i.e., when the map $\phi$ is \emph{injective}. The price we pay for this restriction is that we will give ourselves one specific $2$-form $h$ representing the diagonal class on $\cC_g^2$ as a starting point, and declare it to be tautological. Our choice of the $2$-form $h$ is motivated by the work of Kawazumi \cites{kawazumi2008johnson, kawazumi2009canonical}. 

Let $\Delta$ denote the diagonal class on $\cC_g^2$. We also use $\Delta$ to denote the diagonal morphism $\cC_g \to \cC_g^2$, i.e., the tautological morphism corresponding to the unique map $\{1,2\} \to \{1\}$. The identity $\Delta_*(1)=\Delta$ of classes shows that $\Delta$ is a tautological class. Let $G \colon \cC_g^2 \to \R$ be the canonical Green's function as introduced by Arakelov \cite{Arakelov1974}, see Section~\ref{sec:arakmetric}. The function $G$ defines a smooth Hermitian metric $\|\cdot\|$ on the line bundle $\cO(\Delta)$ on $\cC_g^2$ by setting $\|1\|=G$, where $1$ denotes the canonical global section of the line bundle $\cO(\Delta)$. 

We take the $2$-form $h$ to be the Chern form of the line bundle $\cO(\Delta)$ with the given metric,
\[ h := c_1\left( \cO(\Delta), \|\cdot\| \right). \]
This leads to the following definition for the tautological rings at the level of forms. When \(f:\cC_g^r \to \cC_g^s\) is a tautological submersion we denote by $\int_f \colon A^*(\cC_g^r) \to A^*(\cC_g^s)$ the integration along the fiber operating on differential forms.  \\

\noindent \textbf{Definition.} 
	The \emph{rings of tautological forms} \(\tR^*(\cC_g^r)\) (\(r\geq 0\)) are the unique sub-\(\R\)-algebras \(\tR^*(\cC_g^r) \subseteq A^*(\cC_g^r)\) such that the following holds:
	\begin{enumerate}
		\item \(h\in \tR^*(\cC_g^2)\);
		\item If \(f:\cC_g^r \to \cC_g^s\) is a tautological morphism, then \(f^*(\tR^*(\cC_g^s)) \subseteq \tR^*(\cC_g^r)\);
		\item If \(f:\cC_g^r \to \cC_g^s\) is a tautological submersion, then \(\int_f(\tR^*(\cC_g^r)) \subseteq \tR^*(\cC_g^s)\);
		\item \(\tR^*(\cC_g^r)\) are minimal: if \(S^*(\cC_g^r)\subseteq A^*(\cC_g^r)\) (\(r\geq 0\)) is a collection of sub-\(\R\)-algebras that satisfies (1)--(3), then \(\tR^*(\cC_g^r) \subseteq S^*(\cC_g^r)\) for all \(r\geq 0\).
	\end{enumerate}
Elements of the rings \(\tR^*(\cC_g^r)\) are called \emph{tautological (differential) forms}. \\

As any system satisfying (1)--(3) above can always be reduced to a smaller system satisfying (1)--(3) by removing all odd-degree forms, it follows
that all odd-degree subspaces \(\tR^{2d+1}(\cC_g^r)\) vanish. A similar argument shows that the rings of tautological forms consist entirely of \emph{closed} forms.

From the definition we readily obtain the tautological differential forms
\begin{equation} \label{eqn:e^A}
 e^A := \Delta^*h  \in \tR^2(\cC_g) 
\end{equation}
 as well as
\begin{equation} \label{eqn:e_d^A}
 e_d^A := \int_{\cC_g/\cM_g} (e^A)^{d+1} \in \tR^{2d}(\cM_g) \, , \quad d \in \Z_{> 0} . 
\end{equation}
The notation $e^A$ is borrowed from \cites{kawazumi2008johnson,  kawazumi2009canonical}.

Let $r \in \Z_{\ge 0}$. Based on the examples \eqref{eqn:e^A} and \eqref{eqn:e_d^A}, it is easy to see that (as desired) the canonical map
\( \tR^*(\cC_g^r) \to H^{2*}(\cC_g^r;\R) \) obtained by taking cohomology classes surjects onto the sub-$\R$-algebra $RH^*(\cC_g^r) \otimes_\Q \R$. Indeed, let $p \colon \cC_g^r \to \cM_g$ denote the projection map, for $i=1,\ldots,r$ denote by $p_i \colon \cC_g^r \to \cC_g$ the projection onto the $i$-th coordinate, and for $1 \leq i < j \leq r$ denote by $p_{ij} \colon \cC_g^r \to \cC_g^2$ the projection onto the $i$-th and $j$-th coordinate. Let $K$ denote the cohomology class of the relative cotangent bundle of $\cC_g$ over $\cM_g$, and let $\kappa_d = p_* K^{d+1}$ denote the kappa-classes on $\cM_g$. The surjectivity claim follows immediately from the following observations: 
\begin{itemize}
\item the tautological ring $RH^*(\cC_g^r)$ is generated by the classes $p^*\kappa_d$,  $p_i^*K$ and $p_{ij}^*\Delta$ -- this follows immediately from how the tautological rings are defined in Chow rings in \cite{LooijengaTautological}; 
\item the tautological form $e^A$ represents the class $K$ up to a sign, the tautological form $ e_d^A$ represents the class $\kappa_d$ up to a sign, and the tautological form $h$ represents the class $\Delta$; 
\item the projections $p$, $p_i$ and $p_{ij}$ are tautological morphisms.
\end{itemize}
It follows from Looijenga's result in \cite{LooijengaTautological} that all tautological forms of degree larger than \( 2(g+r-2)\) are exact. We will show in this paper that certain $2$-forms obtained from natural normal functions on the moduli spaces $\cC_g^r$ are tautological (Theorem~\ref{thm:jacobian}), and in fact generate the tautological rings in a suitable sense (Theorem~\ref{thm:char_alt}). Also we show that  rings of  tautological forms are finite dimensional (Theorem~\ref{thm:finite_dim}). Finally,  we  describe a basis of the degree-two part $\tR^2(\cC_g^r)$ of the ring of tautological forms (Theorem~\ref{thm:degree_two}), as well as a basis of the space of exact tautological $2$-forms on $\cM_g$ (Theorem~\ref{thm:ZK_inv}).

 \subsection{Statement of the main results} \label{subsec:statement_main}
 
 \renewcommand*{\thethm}{\Alph{thm}}

We assume that $g \ge 2$.
Let $\cJ_g \to \cM_g$ denote the universal Jacobian. Let $\cP$ denote the Poincar\'e bundle on $\cJ_g \times_{\cM_g} \cJ^\lor_g$, equipped with the tautological rigidification along the projection onto the second coordinate. Let $\lambda \colon \cJ_g \imto \cJ_g^\lor$ denote the canonical principal polarization, and write $\cB$ for the rigidified line bundle $(\id,\lambda)^*\cP$ on $\cJ_g$. We remark that the restriction of $\cB$ to a fiber of $\cJ_g \to \cM_g$ represents \emph{twice} the principal polarization~$\lambda$.

By for instance \cite[Sections 6--7]{HainReed} the Chern class $c_1(\cB) \in H^2(\cJ_g,\R)$ contains a canonical $(1,1)$-form $2 \omega_0$, uniquely characterized by the following two properties:
\begin{itemize}
\item the form vanishes along the given rigidification;
\item the form is fiberwise translation invariant.
\end{itemize}
Following \cite[Section~2]{GeomBogom} we call $\omega_0$ the \emph{Betti form} on $\cJ_g$. 

Let \(r\geq 0\) be any integer, let $n \in \Z$, and let \(m = (m_1,\dots,m_r)\) be an \(r\)-tuple of integers whose sum equals~$(2g-2)n$. These data give rise to a natural map
\[ F_m \colon \cC_g^r \to \cJ_g  \]
given by sending an $r$-marked curve $(C,x_1,\ldots,x_r)$ to the class of the degree zero line bundle $\cO_C(m_1x_1 + \cdots + m_rx_r)\otimes \omega_C^{\otimes -n}$ in the Jacobian of~$C$. Here $\omega_C$ denotes the canonical line bundle on $C$.

Our first main result says that the canonical $2$-forms $F_m^*\omega_0$  are all tautological.
\begin{thm} \label{thm:jacobian} Each of the $2$-forms \(2F_m^*\omega_0\) is an integral linear combination of the tautological $2$-forms $p^*e_1^A$, $p_i^*e^A$ and $p_{ij}^*h$. 
\end{thm}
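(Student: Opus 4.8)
The plan is to compute the pullback $F_m^*\omega_0$ by decomposing $F_m$ and exploiting the characterization of $\omega_0$ (and hence of $\mathcal B$) in terms of the Poincaré bundle. First I would reduce to a universal situation. Consider the difference map on $\mathcal C_g^r \times_{\mathcal M_g} \mathcal C_g^r$; more efficiently, observe that $F_m = \sum_i m_i \, A_i - n\, K_{\mathrm{AJ}}$, where $A_i \colon \mathcal C_g^r \to \mathcal J_g$ is the Abel--Jacobi map sending $(C,x_1,\ldots,x_r)$ to the class of $\mathcal O_C\big((2g-2)x_i - \omega_C\big)$ composed with a rescaling, and $K_{\mathrm{AJ}}$ accounts for the canonical class; one must be slightly careful because $A_i$ lands in $\mathcal J_g$ only after dividing by $2g-2$, so it is cleaner to work with the map $(C,x_i) \mapsto$ class of $\mathcal O_C((2g-2)x_i)\otimes\omega_C^{-1}$, which is the restriction of $F_m$ to the case $r=1$, $m_1 = 2g-2$, $n=1$. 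Since $\omega_0$ is fiberwise translation invariant and a $(1,1)$-form, $F_m^*\omega_0$ is a $\Z$-bilinear expression (via the polarization pairing) in the pullbacks of the "coordinate" sections, so it suffices to handle the building-block maps and then expand the bilinear form.

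The key computation is therefore: for the single Abel--Jacobi type map $a \colon \mathcal C_g \to \mathcal J_g$, $(C,x)\mapsto [\mathcal O_C((2g-2)x)\otimes\omega_C^{-1}]$, identify $2 a^*\omega_0$ as a tautological $2$-form on $\mathcal C_g$. The rigidified line bundle $\mathcal B$ on $\mathcal J_g$ has $2\omega_0 = c_1(\mathcal B, \|\cdot\|_{\mathrm{Ar}})$ for the metric coming from the Arakelov/theta data (this is exactly the Hain--Reed normalization recalled in the excerpt). Its pullback $a^*\mathcal B$ is a metrized line bundle on $\mathcal C_g$ whose underlying class we can read off from the standard formula for pulling back the theta/Poincaré bundle along an Abel--Jacobi map: it is a combination of $K = $ (relative cotangent class), the diagonal class, and a $\kappa$-type correction. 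Concretely, $a^*\mathcal B$ should be isomorphic, as a line bundle, to $\mathcal O((2g-2)^2 \Delta|_{\mathcal C_g})\otimes$ (canonical-bundle twists), and on the level of Chern forms this becomes a combination of $\Delta^* h = e^A$, the vertical pullback of $e_1^A$, and $h$. The input here is Proposition/Section on the Arakelov metric (the excerpt's Section~\ref{sec:arakmetric}) together with the fact, recalled in the statement of Theorem~\ref{thm:jacobian}'s setup, that $\mathcal B$ restricted to a fiber represents twice the polarization; matching metrics — not just classes — is what pins down the form rather than a cohomology class.

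Having the single-map formula, I would assemble the general case: $F_m = (\text{combination of the } a\text{'s pulled back via } p_i) $, and since $\omega_0$ is translation-invariant one has $F_m^*\omega_0 = \sum_{i,j} c_{ij}\, p_{ij}^*(\text{pairing term}) + \sum_i c_i\, p_i^*(a^*\omega_0) + c_0\, p^*(\text{base term})$, where the $c$'s are explicit quadratic expressions in the $m_i$ and $n$ coming from the bilinearity of the polarization form and the constraint $\sum m_i = (2g-2)n$. The cross terms $p_{ij}^*(\cdots)$ are governed by the biextension/Poincaré-bundle structure and reduce to $p_{ij}^*h$; the diagonal terms to $p_i^*e^A$; and the purely base contribution — coming from the $\omega_C^{-n}$ twist and the self-intersection corrections — to $p^*e_1^A$. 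The main obstacle I expect is this last bookkeeping step: getting the integral coefficients exactly right, including the sign and normalization conventions relating $\omega_0$, $\mathcal B$, the factor of $2$, and the Arakelov Green's function metric defining $h$. One has to track carefully (i) the $(2g-2)$-scaling built into landing in $\mathcal J_g$, (ii) that $e^A = \Delta^* h$ and $e_1^A = \int_{\mathcal C_g/\mathcal M_g}(e^A)^2$, and (iii) the Hain--Reed normalization of $\omega_0$; an off-by-$(2g-2)$ or sign error anywhere propagates into the final linear combination. Everything else is a formal consequence of translation-invariance of the Betti form plus the already-established fact that $h$, $e^A$ and $e_1^A$ are tautological.
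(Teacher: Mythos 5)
Your outline follows the route that the paper explicitly sets aside (``Theorem~\ref{thm:jacobian} can in principle be proved by following the lines of \cite[Section~11]{HainNormal}''): decompose $F_m$ into elementary Abel--Jacobi maps, use fiberwise translation invariance of $\omega_0$ to get a quadratic dependence on $(m,n)$ at the level of forms, and evaluate the resulting bilinear form on building blocks. That strategy is viable in principle, and your identification of where the difficulty sits is accurate. But as written there is a genuine gap at exactly that point: the entire content of the theorem is the claim that the pullback of the \emph{metrized} bundle $\cB$ along an Abel--Jacobi type map is, as a Hermitian line bundle (or at least at the level of Chern \emph{forms}), the Arakelov-metrized combination of $\cO(\Delta)$, $\omega$ and $\pair{\omega,\omega}$. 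You assert that $a^*\cB$ ``should be'' such a combination and that ``matching metrics --- not just classes --- is what pins down the form,'' but you give no mechanism for matching the canonical (rigidified, fiberwise translation-invariant) metric on $\cB$ with the Arakelov--Green's function metric defining $h$ and the Arakelov metric defining $e^A$. Without that input you only recover Hain's statement in cohomology, not the refinement for forms. The missing ingredient is precisely the paper's Theorem~\ref{thm:deligne_poinc} (Moret-Bailly's isometry $([L],[M])^*\cP_0^{\otimes -1}\simeq\pair{L,M}$ for fiberwise admissible degree-zero bundles), together with Proposition~\ref{prop:fiberwise} and the adjunction isometry~\eqref{adj_isometry}; some such comparison theorem between the N\'eron/theta metric and the Arakelov metric has to be invoked, and your proposal never names one.

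Two smaller issues. First, the decomposition $F_m=\sum_i m_iA_i - nK_{\mathrm{AJ}}$ does not literally make sense since the individual summands are not degree-zero classes; your proposed fix (replacing $A_i$ by $x\mapsto[\cO_C((2g-2)x)\otimes\omega_C^{-1}]$) only expresses $F_m$ as an integral combination when $2g-2$ divides the $m_i$, so to get \emph{integral} coefficients in the final answer you would need to argue on the sublattice $\sum m_i=(2g-2)n$ with a suitable set of generators (e.g.\ the maps $F_{(1,-1)}$ and $F_{((2g-2),0,\dots,0)}$ with $n=1$), which is additional bookkeeping you have not done. Second, for comparison: the paper avoids both problems at once by setting $L_m=\cO(m_1\sigma_1+\cdots+m_r\sigma_r)\otimes\omega^{\otimes-n}$ (already degree zero, with integral exponents built in) and computing $F_m^*\cB^{\otimes-1}\simeq\pair{L_m,L_m}$ isometrically, after which bimultiplicativity of the Deligne pairing does the entire quadratic expansion and the metric comparison simultaneously; Corollary~\ref{cor:pullbackomega} is then just taking Chern forms. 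If you want to complete your version, the cleanest repair is to import that same isometry as your ``single-map formula.''
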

Let $2\phi_0$ denote the Chern class of $\cB$ in $H^2(\cJ_g,\Q)$. It follows from a result of Hain in \cite[Theorem~11.5]{HainNormal} that each of the classes $2F_m^* \phi_0$ is an integral linear combination of the tautological classes $p^*\kappa_1$, $p_i^*K$ and $p_{ij}^*\Delta$. Theorem~\ref{thm:jacobian} can be viewed as a refinement of Hain's result at the level of forms. Theorem~\ref{thm:jacobian} can in principle be proved by following the lines of \cite[Section~11]{HainNormal}, however we will follow here a slightly different route using Deligne pairings, a tool that we will need anyway. 

Let $\delta=F_{(1,-1)}$ denote the difference map $\cC_g^2 \to \cJ_g$. Our next result gives perhaps a more canonical description of the rings of tautological forms.
\begin{thm} \label{thm:char_alt} The rings of tautological forms $\tR^*(\cC_g^r)$ are the smallest $\R$-subalgebras of $A^*(\cC_g^r)$ stable under pullback along tautological morphisms and fiber integration along tautological submersions, and containing the  $2$-form   \(2\delta^*\omega_0\).
\end{thm}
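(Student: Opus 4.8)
The plan is to show that $\tR^*$ coincides with the system $\cS^*$ ``generated'' by $2\delta^*\omega_0$: let $\cS^*$ be the smallest collection of sub-$\R$-algebras of the $A^*(\cC_g^r)$ that is stable under pullback along tautological morphisms and fibre integration along tautological submersions and contains $2\delta^*\omega_0\in\cS^2(\cC_g^2)$ (such a smallest collection exists, by the same argument as for $\tR^*$, as the intersection of all collections with these properties). One inclusion is immediate: since $\delta=F_{(1,-1)}$ (with $n=0$), Theorem~\ref{thm:jacobian} exhibits $2\delta^*\omega_0$ as an integral combination of $p^*e_1^A$, $p_i^*e^A$ and $p_{12}^*h=h$, so $2\delta^*\omega_0\in\tR^2(\cC_g^2)$, and minimality of $\cS^*$ gives $\cS^*\subseteq\tR^*$. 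For the reverse inclusion, by the minimality clause in the definition of $\tR^*$ it is enough to check that $\cS^*$ satisfies conditions (1)--(3); conditions (2) and (3) hold by construction, so the whole statement reduces to showing $h\in\cS^2(\cC_g^2)$.

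First I would sharpen Theorem~\ref{thm:jacobian} for $m=(1,-1)$ into an identity of forms. Restricting cohomology classes to a fibre $C\times C$ and combining the Poincar\'e formula with $\Delta\cdot\Delta=2-2g$ gives $[\delta^*\Theta]|_{C\times C}=(g-1)\bigl(p_1^*[\mathrm{pt}]+p_2^*[\mathrm{pt}]\bigr)+[\Delta]$, and since $[\Delta],K_1,K_2$ are independent in $H^2(C\times C)$ the coefficients of $h$, $p_1^*e^A$, $p_2^*e^A$ in Theorem~\ref{thm:jacobian} are forced to be $2,-1,-1$. Writing the resulting identity as $2\delta^*\omega_0=2h-p_1^*e^A-p_2^*e^A+c\,p^*e_1^A$ and pulling it back along the diagonal $\Delta\colon\cC_g\to\cC_g^2$ — using that $\delta\circ\Delta$ factors through the zero section of $\cJ_g$, along which $\omega_0$ vanishes — forces $c\,p^*e_1^A=0$, so in all cases
\[
2h \;=\; 2\delta^*\omega_0 + p_1^*e^A + p_2^*e^A
\]
as forms on $\cC_g^2$. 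It therefore suffices to produce $e^A\in\cS^2(\cC_g)$, because then $p_i^*e^A\in\cS^2(\cC_g^2)$ by pullback along the tautological morphisms $p_i$.

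To extract $e^A$ I would run wedge-powers-then-fibre-integration. Put $w:=\int_{p_1}\bigl((\delta^*\omega_0)^{\wedge2}\bigr)\in\cS^2(\cC_g)$. Expanding $(\delta^*\omega_0)^{\wedge2}=\bigl(h-\tfrac12(p_1^*e^A+p_2^*e^A)\bigr)^{\wedge2}$ and computing each term with the projection formula, base change over $\cM_g$, $\int_{p_1}h=1$ (since $h$ restricts to the Arakelov form $\mu_C$ on a fibre of $p_1$), $\int_{p_1}(p_2^*e^A)=2-2g$, $\int_{p_1}\bigl(p_2^*(e^A)^{\wedge2}\bigr)=p^*e_1^A$, and the equality of currents $h=\delta_\Delta+dd^c(-\log G^2)$ together with the normalization $\int_C\log G(x,y)\,\mu_C(y)=0$ (which kills the $dd^c$-correction terms, $h$ and $e^A$ restricting to multiples of $\mu_C$ on fibres), I expect the \emph{exact} identity $w=-g\,e^A+\tfrac14\,p^*e_1^A$ in $A^2(\cC_g)$. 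Applying $\int_{\cC_g/\cM_g}$ to $w^{\wedge2}$ and using $\int_{\cC_g/\cM_g}(e^A)^{\wedge2}=e_1^A$, $\int_{\cC_g/\cM_g}e^A=2-2g$ then gives $\int_{\cC_g/\cM_g}\bigl(w^{\wedge2}\bigr)=g(2g-1)\,e_1^A$, a nonzero multiple of $e_1^A$ as $g\ge2$. Hence $e_1^A\in\cS^2(\cM_g)$, so $p^*e_1^A\in\cS^2(\cC_g)$, then $e^A=\tfrac1g\bigl(\tfrac14 p^*e_1^A-w\bigr)\in\cS^2(\cC_g)$, and $h\in\cS^2(\cC_g^2)$ follows.

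The main obstacle is making the third step rigorous: fibre integration of a product of smooth forms only returns the ``expected'' tautological form up to an exact form, which must be pinned down since exact forms are nonzero in $A^*$. The key point is that every error term is $dd^c$ of a function of the shape $x\mapsto\int_C(-\log G(x,y)^2)\,\eta(y)$ with $\eta$ the fibrewise restriction of $h$ or $e^A$; these restrictions are multiples of the admissible form $\mu_C$, so the integrals vanish identically by the defining normalization of the Arakelov--Green's function. Carefully establishing these vanishings — and that $h$, $e^A$ do restrict to multiples of $\mu_C$ on fibres — is where the real work is; Deligne pairings, in play for Theorem~\ref{thm:jacobian} in any case, give a natural language for it. Pleasantly, the argument is uniform in $g\ge2$, the numerical factors $g$ and $g(2g-1)$ never vanishing.
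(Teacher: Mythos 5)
Your proposal is correct and follows essentially the same route as the paper: reduce to producing $h$ from $2\delta^*\omega_0$, use the exact identity $2\delta^*\omega_0 = 2h - p_1^*e^A - p_2^*e^A$ (the paper's Equation~\ref{eqn:pullback_diff}, a special case of Corollary~\ref{cor:pullbackomega}), and then recover $e_1^A$, $e^A$, $h$ by squaring and fiber integrating, with the same constants $-g\,e^A+\tfrac14 p^*e_1^A$ and $g(2g-1)e_1^A$ appearing (up to the paper's overall factor of $4$ resp.\ $16$). The only cosmetic difference is that you pin down the coefficients of that identity by restricting to a fiber and pulling back along the diagonal, whereas the paper reads them off directly from the Deligne-pairing isometry of Theorem~\ref{prop:fmisometry}, and the exact fiber-integral identities you flag as ``the real work'' are precisely the content of Lemma~\ref{lem:inteAc1L}, proved there via Proposition~\ref{prop:fiberwise} and the Green's function normalization, as you anticipate.
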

For the proof, we need to show that we can obtain the form $h$ from the form $2\delta^*\omega_0$ using pullbacks and fiber integrations.

We next have the following finiteness result for our tautological rings.
 \begin{thm} \label{thm:finite_dim}
	For each \(r\geq 0\) and \(g\geq 2\), the ring of tautological forms \(\tR^*(\cC_g^r)\) is finite-dimensional as an $\R$-vector space.
\end{thm}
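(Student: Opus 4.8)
The plan is to reduce the finite-dimensionality of $\tR^*(\cC_g^r)$ to the finite-dimensionality of $\tR^*(\cC_g^{r+1})$ for an appropriate range of $r$, and then to bound things directly. First I would record the two structural facts that make the tautological rings manageable. One: every tautological morphism factors as a composition of the basic "forgetful/diagonal" morphisms $\cC_g^{r+1}\to\cC_g^r$ together with coordinate permutations, so the algebras $\tR^*(\cC_g^r)$ are generated, as an $\R$-algebra, by pullbacks along projections $p^*$, $p_i^*$, $p_{ij}^*$ of the distinguished forms $e_1^A$ (and more generally $e_d^A$), $e^A$, $h$ — this is exactly the surjectivity discussion in the introduction, but now one must check it holds at the level of forms and not just cohomology classes, using the projection formula for fiber integration and the fact that $\int_{\cC_g/\cM_g}(e^A)^{d+1}=e_d^A$ is tautological. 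Two: by Looijenga's vanishing (invoked at the end of the introduction) every tautological form of degree $> 2(g+r-2)$ is \emph{exact}; since the tautological rings consist of closed forms, this does not immediately bound them, but it does say their images in de Rham cohomology vanish in high degree.

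The heart of the argument is therefore to show that a closed tautological form which is exact is in fact forced to be part of a finite-dimensional family — i.e., to control the "extra" information in a form beyond its cohomology class. Here I would use the Deligne-pairing / normal-function machinery already set up for Theorems~\ref{thm:jacobian} and~\ref{thm:char_alt}: the generating forms $e^A$, $e_d^A$, $h$ (equivalently $2\delta^*\omega_0$) are pulled back from the universal Jacobian, and on an abelian scheme the Betti form $\omega_0$ is \emph{fiberwise translation invariant} and vanishes along the rigidification, so its powers and their fiber integrals satisfy strong rigidity constraints. Concretely I would argue that the subalgebra of $A^*(\cJ_g^{\times_k})$ generated by pullbacks of $\omega_0$ under the various addition/difference maps, and closed under fiber integration along the projections to $\cM_g$, is finite-dimensional — because translation-invariant forms on an abelian scheme of relative dimension $g$ form a finite-dimensional module over $A^*(\cM_g)$-ish data, and $A^*(\cM_g)$ enters only through the finitely many classes $e_d^A$ which are themselves nilpotent of bounded order by Looijenga's bound $d\le g-2$ on $\cM_g$. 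Pulling this back along the (finitely many, up to the combinatorial data $m$) maps $F_m$ then controls $\tR^*(\cC_g^r)$.

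More carefully, the inductive step I would set up is: assume $\tR^*(\cC_g^s)$ is finite-dimensional for all $s\le r$; since $\tR^*(\cC_g^{r})$ is generated by $p^*\tR^*(\cM_g)$, the $p_i^*e^A$, and the $p_{ij}^*h$ — finitely many generators — it suffices to bound the multiplicities with which monomials in these generators can appear, i.e. to show the generators satisfy enough algebraic relations (at the level of \emph{forms}, not just cohomology) to cut the polynomial algebra down to finite rank. The relations I would exploit are: (a) $h^2$ is a tautological multiple of $p_1^*e^A\cdot h$ plus $p_2^*e^A\cdot h$-type terms (the Arakelov adjunction-type identity $\Delta^*h=e^A$ together with the fact that $h$ restricted to the diagonal controls $h^2$), giving a rewriting rule that reduces powers of each $p_{ij}^*h$ to degree $\le 1$ in that variable modulo lower-complexity monomials; (b) on $\cM_g$, $\tR^*(\cM_g)$ is generated by the $e_d^A$ with $d\le g-2$ (Looijenga), hence finite-dimensional; (c) a fiber-integration computation showing $\int_{\cC_g^{r+1}/\cC_g^r}$ of any monomial in the generators lands in the span of previously-listed monomials. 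Combining (a)–(c) shows the monomial basis is finite. The main obstacle, and the step I expect to be delicate, is (a)/(c): proving that the necessary quadratic relations among $h$, $e^A$ hold \emph{on the nose as differential forms} and not merely cohomologically — this requires genuine input about the Arakelov Green's function $G$ (e.g. that $h$ is the curvature of a metric whose restriction to $\Delta$ is tied to the Arakelov metric on $\omega$), rather than soft functoriality. That is precisely where the Deligne-pairing formalism, rather than pure tautological-ring bookkeeping, will have to do the work.
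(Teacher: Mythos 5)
Your argument rests on the claim that $\tR^*(\cC_g^r)$ is generated as an $\R$-algebra by the pullbacks of the finitely many distinguished forms $e_d^A$, $e^A$, $h$, so that finite-dimensionality would follow from degree bounds and relations among these generators. That claim is false at the level of forms, and its failure is exactly the point of the theorem. Closure under fiber integration produces forms that are not polynomials in those generators: the form $\nu=\int_{\cC_g^2/\cM_g}h^3$ is tautological and, by Theorem~\ref{thm:degree_two}, is linearly independent from $e_1^A$ in $\tR^2(\cM_g)$ for $g\ge 3$ (they differ by the nonzero exact form $\frac{1}{\pi\i}\del\delbar\varphi$). So already the degree-two part of $\tR^*(\cM_g)$ exceeds what your generators produce, and a priori each further iteration of ``multiply, pull back, fiber integrate'' could create new forms. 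For the same reason your step (b) (that $\tR^*(\cM_g)$ is generated by the $e_d^A$ with $d\le g-2$) imports Looijenga's theorem from Chow/cohomology to forms, where it does not hold. Your relation (a) is also not true as stated: there is no identity expressing $h^2$ in terms of $p_i^*e^A\wedge h$ as forms on $\cC_g^2$; what is true (and what the paper uses) are the fiber-integral identities $\int_{p_1}h^2=e^A$, $\int_{p_1}h\wedge p_i^*e^A=e^A$ and $\int_{p_{12}}p_{13}^*h\wedge p_{23}^*h=h$ of Lemmas~\ref{lem:inteAc1L} and~\ref{lem:p13hp23h}, which only let you simplify a product of $h$'s \emph{after} integrating out a coordinate.

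The paper's proof instead organizes every form obtainable from $h$ by products, pullbacks and fiber integrals into the forms $\alpha_\Gamma$ indexed by $r$-marked graphs $\Gamma$ (one extra coordinate of $\cC_g^{r+u}$ per unmarked vertex, one factor of $h$ per edge, then integrate out the unmarked coordinates); Theorem~\ref{thm:taut-forms-generated-by-graphs} shows these span $\tR^*(\cC_g^r)$. The identities above are then packaged as contraction moves on graphs (Proposition~\ref{prop:contractions}), reducing the spanning set to \emph{contracted} graphs, of which there are only finitely many in each Euler characteristic by the degree count $2e\ge 3u$ (Theorem~\ref{thm:finmanycontrgraphs}). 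Finally, the top degree is bounded not by Looijenga's vanishing (which, as you correctly note, only gives exactness) but by the fact that $A^d(\cC_g^r)$ injects into $A^d(\cX_g^r)$ with $\cX_g^r$ a manifold of real dimension $6g-6+2r$; your proposal leaves this last point unaddressed. You are right that the essential analytic input is a collection of identities among $h$ and $e^A$ that hold on the nose via the Deligne pairing and admissibility, but without a device such as the graph formalism to enumerate, and then collapse, the output of iterated fiber integration, the argument does not close.
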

The proof proceeds in a few steps. First, we develop a graphical formalism that allows to attach tautological forms on $\cC_g^r$ to what we call \emph{$r$-marked graphs}. We will see that the forms associated to $r$-marked graphs span the tautological ring $\tR^*(\cC_g^r)$ as an $\R$-vector space. In fact, we shall see that it suffices to take only \emph{contracted} $r$-marked graphs; and these will be seen to be classified by a finite set.

Our next result gives an explicit description of the degree-two part $\tR^2(\cC_g^r)$ of the ring of tautological forms.
We set
\begin{equation} \label{eq:def_nu}
 \nu := \int_{\cC_g^2/\cM_g} h^3 \in \tR^2(\cM_g).
\end{equation}
\begin{thm} \label{thm:degree_two} Let $r \ge 0$. If \(g\geq 3\) a basis of $\tR^2(\cC_g^r)$ is given by the \(2\)-forms
	\[\{p_{ij}^* h: 1\leq i<j\leq r\} \cup \{p_i^*e^A: 1\leq i\leq r\} \cup \{e_1^A, \nu\}.\]
	If \(g=2\) a basis of $\tR^2(\cC_g^r)$ is given by the \(2\)-forms
	\[\{p_{ij}^* h: 1\leq i<j\leq r\} \cup \{p_i^*e^A: 1\leq i\leq r\} \cup \{e_1^A\}.\]
\end{thm}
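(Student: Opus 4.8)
The plan is to prove the theorem in two movements: first that the listed $2$-forms span $\tR^2(\cC_g^r)$, and second that they are linearly independent as differential forms (with, for $g=2$, the extra fact that $\nu$ lies in $\R\cdot e_1^A$ and so drops out).

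\textbf{Spanning.} Here I would invoke the graphical formalism set up in the proof of Theorem~\ref{thm:finite_dim}: every class in $\tR^*(\cC_g^r)$ is an $\R$-linear combination of the tautological forms $\omega(\Gamma)$ attached to contracted $r$-marked graphs $\Gamma$, and the degree of $\omega(\Gamma)$ is read off combinatorially from $\Gamma$. Enumerating the finitely many contracted $r$-marked graphs whose associated form has degree $2$, one checks that the resulting forms $\omega(\Gamma)$ are, up to scalars, exactly: $p_{ij}^*h$ (two distinct legs joined by an edge), $p_i^*e^A$ (a single leg carrying an $e^A$-decoration), and the pullbacks from $\cM_g$ of the two legless contracted graphs of the relevant weight, which give $p^*e_1^A$ and $p^*\nu$. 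Thus for every $g\ge 2$ the space $\tR^2(\cC_g^r)$ is spanned by $\{p_{ij}^*h\}\cup\{p_i^*e^A\}\cup\{p^*e_1^A,\,p^*\nu\}$, where as usual $e_1^A$ and $\nu$ in the statement abbreviate these pullbacks.

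\textbf{Reduction to $\cM_g$.} Suppose a relation $\sum_{i<j} b_{ij}\,p_{ij}^*h + \sum_i a_i\,p_i^*e^A + c\,p^*e_1^A + d\,p^*\nu = 0$ holds in $A^2(\cC_g^r)$. Restricting to a fibre $C^r$ of $p\colon \cC_g^r\to\cM_g$ kills the last two terms, being pulled back from the base. Passing to cohomology in $H^2(C^r,\R)$ and using the Künneth decomposition, the class of $p_{ij}^*h|_{C^r}$ has nonzero component in the summand $H^1(C)_i\otimes H^1(C)_j$ (this component is, up to sign, the class of the nondegenerate cup-product pairing on $H^1(C)$), whereas the classes $p_i^*e^A|_{C^r}$ live in $H^2(C)_i\otimes H^0$. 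Comparing Künneth components in distinct slots forces $b_{ij}=0$ for all $i<j$, and then $\sum a_i\,(2g-2)[\mathrm{pt}]_i=0$ gives $a_i=0$ for all $i$. Since pullback of differential forms along the submersion $p$ is injective, we are reduced to studying relations $c\,e_1^A + d\,\nu = 0$ in $A^2(\cM_g)$: we must show these force $c=d=0$ when $g\ge 3$, and we must show $\nu\in\R\cdot e_1^A$ when $g=2$.

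\textbf{The forms $e_1^A$ and $\nu$ on $\cM_g$.} Both represent $\kappa_1$ in $H^2(\cM_g,\R)$: since $e^A$ represents $K$ up to sign, $e_1^A$ represents $p_*K^2=\kappa_1$; and since $h$ represents the diagonal class, the identities $[\Delta]^2=\Delta_*(-K)$ and $[\Delta]^3=\Delta_*(K^2)$ give $[\nu]=\int_{\cC_g^2/\cM_g}[\Delta]^3=\int_{\cC_g/\cM_g}K^2=\kappa_1$. Hence $\nu-e_1^A$ is an exact tautological $2$-form on $\cM_g$, and the crux is to pin it down. For $g\ge 3$ I would show $\nu-e_1^A$ is a nonzero multiple of the Levi form of the Kawazumi--Zhang invariant — this is the form-level statement underlying Theorem~\ref{thm:ZK_inv}, obtained from the Deligne-pairing computations of the earlier sections together with the non-constancy of that invariant — so that $e_1^A$ and $\nu$ are independent as forms and the displayed set is a basis. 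For $g=2$ the same universal identity degenerates (reflecting that $\dim\cM_2=3$ and $\cM_2$ is rationally acyclic): the Kawazumi--Zhang contribution is then forced to be proportional to $e_1^A$, so $\nu$ is a scalar multiple of $e_1^A$ and the basis shrinks accordingly.

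\textbf{Main obstacle.} The spanning step and the Künneth argument are routine; the real work is the last step — identifying the exact $2$-form $\nu-e_1^A$ on $\cM_g$ with enough precision to see that it is nonzero for $g\ge 3$ and lies in $\R\cdot e_1^A$ for $g=2$. This is exactly where the Deligne-pairing machinery and the analysis of the Kawazumi--Zhang invariant developed elsewhere in the paper have to be brought to bear.
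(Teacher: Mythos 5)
Your spanning step coincides with the paper's (Theorem~\ref{thm:generate_deg_two}, via contracted $r$-marked graphs), and your Künneth argument for killing the coefficients $b_{ij}$ and $a_i$ is a legitimate alternative to what the paper actually does: the paper instead runs an induction on $r$, comparing the subspaces $W_1=\Im p_{(r)}^*$ and $W_2=\Im p_{(r-1)}^*$ and showing $p_{r-1,r}^*h\notin W_1+W_2$ by pulling back along the partial diagonal and fiber integrating. Your route is shorter; the paper's stays inside the tautological formalism and feeds the remark that for large $r$ no new forms appear. Either way, both arguments correctly reduce the theorem to the case $r=0$, i.e.\ to determining $\dim\tR^2(\cM_g)$.

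It is in that base case that your proposal has genuine gaps. For $g\ge 3$ you argue that since $\nu-e_1^A$ is a nonzero exact form, ``$e_1^A$ and $\nu$ are independent''; this does not follow. From a relation $c\,e_1^A+d\,\nu=0$ one gets $(c+d)e_1^A=-d(\nu-e_1^A)$, and to conclude $c+d=0$ you must additionally know that $e_1^A$ is \emph{not exact}, i.e.\ that $\kappa_1\neq 0$ in $H^2(\cM_g,\Q)$ for $g\ge 3$ (the paper's Lemma~\ref{prop:some_not_exact}, via $\kappa_1=12\lambda_1$ and the structure of $\Pic(\cM_g)$). This input is missing from your write-up. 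The $g=2$ case is worse: rational acyclicity of $\cM_2$ only tells you that $e_1^A$ and $\nu$ are both exact, and two exact $2$-forms need not be proportional, so nothing ``forces'' $\nu\in\R\cdot e_1^A$ by that reasoning. The paper instead produces the explicit form-level relation $8\nu+12e_1^A=0$ in $\tR^2(\cM_2)$ by cubing the identity $-2\delta^*\omega_0=-2h+p_1^*e^A+p_2^*e^A$ and using that $\omega_0^{g+1}$ vanishes identically on $\cJ_g$ (a Randal-Williams-type relation); no soft cohomological argument replaces this. Finally, for $g=2$ you still owe the nonvanishing $e_1^A\neq 0$ (the class $\kappa_1$ vanishes in $H^2(\cM_2,\Q)$, so this must be seen at the level of forms); the paper deduces it from $\del\delbar\varphi\neq 0$, which for $g=2$ rests on the eigenvalue equation $(\Delta-5)\varphi=0$ rather than on the pluriharmonicity lemma, which is only available for $g\ge 3$.
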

We note that the forms $e_1^A$ and $\nu$ have the same class in cohomology. In particular, the difference $\nu - e_1^A$ is an example of an \emph{exact} tautological form. As it turns out, this exact form is intimately connected with the \emph{Kawazumi--Zhang invariant}  $\varphi \colon \cM_g \to \R$ introduced in \cites{kawazumi2008johnson, kawazumi2009canonical} and in \cite{zhang2010gross}, independently. Namely, the Kawazumi--Zhang invariant is given as the fiber integral
 \begin{equation} \label{eqn:def_ZK_inv}
  \varphi := \int_{\cC_g^2/\cM_g} \log G \, h^2  \in A^0(\cM_g) . 
  \end{equation}
A small calculation, using that $\partial \overline{\partial}$ commutes with fiber integration, shows that
\begin{equation} \label{eqn:lin_comb}
\nu - e_1^A = \frac{1}{\pi\i} \del\delbar\varphi \, . 
\end{equation}
We refer to \cite[Proposition~5.3]{dejongtorus} for details. Our final result says that the Kawazumi--Zhang invariant exactly explains all exact tautological $2$-forms over $\cM_g$.

\begin{thm} \label{thm:ZK_inv}	 The subspace of exact 2-forms in \( \tR^2(\cM_g)\) is one-dimensional,  spanned by the form $ \frac{1}{\pi \i} \partial \overline{\partial} \varphi = \nu -e_1^A$.
        When $g \ge 3$, the Kawazumi--Zhang invariant can be characterized as the only $C^\infty$-function on  \(\cM_g\), up to additive and multiplicative constants, whose Levi form is a tautological form.
\end{thm}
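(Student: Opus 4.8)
The proof naturally splits into two assertions. For the first, I would start from Theorem~\ref{thm:degree_two}: when $g \ge 3$, the space $\tR^2(\cM_g)$ is two-dimensional with basis $\{e_1^A, \nu\}$, and when $g=2$ it is one-dimensional with basis $\{e_1^A\}$. A tautological $2$-form on $\cM_g$ is exact precisely when its cohomology class vanishes. Since $e_1^A$ and $\nu$ represent the same nonzero class $\kappa_1$ in $H^2(\cM_g;\R)$ (as remarked after the statement of Theorem~\ref{thm:degree_two}), the cohomology-class map $\tR^2(\cM_g) \to H^2(\cM_g;\R)$ has one-dimensional image, hence one-dimensional kernel spanned by $\nu - e_1^A$, which by \eqref{eqn:lin_comb} equals $\frac{1}{\pi\i}\del\delbar\varphi$. (In the case $g=2$ there are no exact tautological $2$-forms, consistent with the fact that $\kappa_1 \ne 0$ also there; this is why the characterization statement is restricted to $g \ge 3$, and I would flag this explicitly.) One should double check that $\kappa_1 \neq 0$ in $H^2(\cM_g;\R)$ for $g \geq 2$, which is classical.

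For the characterization of $\varphi$: suppose $\psi \colon \cM_g \to \R$ is a $C^\infty$ function whose Levi form $\frac{1}{\pi\i}\del\delbar\psi$ lies in $\tR^2(\cM_g)$. Write $\frac{1}{\pi\i}\del\delbar\psi = a\, e_1^A + b\, \nu$ for real constants $a,b$. Taking cohomology classes, the left side is exact (it is $\del\delbar$ of a global function), so $(a+b)\kappa_1 = 0$ in $H^2(\cM_g;\R)$; since $\kappa_1 \neq 0$ we get $b = -a$, hence $\frac{1}{\pi\i}\del\delbar\psi = a(\nu - e_1^A) = \frac{a}{\pi\i}\del\delbar\varphi$ by \eqref{eqn:lin_comb}. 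Therefore $\del\delbar(\psi - a\varphi) = 0$, i.e.\ $\psi - a\varphi$ is pluriharmonic on $\cM_g$. The final step is to argue that a pluriharmonic function on $\cM_g$ is constant: this follows because $\cM_g$ (as a complex orbifold / coarse space, or after passing to a suitable finite level cover such as $\cM_g[\ell]$) is a quasi-projective variety for which any bounded or suitably controlled pluriharmonic function is constant — more robustly, one invokes that $H^1(\cM_g;\cO) = 0$ or that $\cM_g$ has no non-constant pluriharmonic functions because $b_1(\cM_g) = 0$ (Mumford) so every closed real $1$-form is exact, and the real $1$-form $d^c(\psi - a\varphi)$ is then $df$ for some function $f$, forcing $\psi - a\varphi$ harmonic and then constant by an argument using properness of a suitable exhaustion or compactification. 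Thus $\psi = a\varphi + c$, which is the claimed characterization up to additive and multiplicative constants.

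The main obstacle is the last step: vanishing of non-constant pluriharmonic functions on $\cM_g$. Since $\cM_g$ is non-compact one cannot invoke the maximum principle directly. The cleanest route is to use $b_1(\cM_g) = 0$: the form $d^c(\psi - a\varphi) = \frac{1}{2\i}(\del - \delbar)(\psi - a\varphi)$ is a closed real $1$-form (closedness is exactly pluriharmonicity), hence exact, say $= dh$ for a real function $h$; then $\psi - a\varphi$ and $h$ are harmonic-conjugate in the sense that $\psi - a\varphi + \i h$ is holomorphic (locally), so it descends to a holomorphic function on $\cM_g$, which is constant because $\cM_g$ admits no non-constant holomorphic functions (e.g.\ $\Gamma(\cM_g, \cO) = \C$, since a finite cover is a normal quasi-projective variety whose smooth locus has this property, or because $\cM_g$ contains complete curves for $g$ large — though one should use a cleaner argument valid for all $g \ge 3$, such as the fact that $\cM_g$ is unirational-adjacent / has only constant global functions by Mumford--Deligne). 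I would present this carefully, citing the standard references for $b_1(\cM_g) = 0$ and for $\Gamma(\cM_g,\cO_{\cM_g}) = \C$, and note that everything should be interpreted orbifold-theoretically or on a fixed level cover, which does not affect the conclusion since $\varphi$ and $\psi$ are pulled back from the coarse space.

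Finally, I would remark that the restriction $g \ge 3$ is essential: for $g = 2$, $\tR^2(\cM_2)$ is one-dimensional spanned by $e_1^A$ with nonzero cohomology class, so it contains no nonzero exact form, and the Levi form of any function in $\tR^2(\cM_2)$ would have to be $0$, recovering only the pluriharmonic (hence constant) functions — so there is no distinguished invariant to characterize in that case.
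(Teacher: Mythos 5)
Your argument for $g \ge 3$ is essentially the paper's: identify the exact forms with the kernel of the class map on the two-dimensional space $\tR^2(\cM_g)$ using $\kappa_1 \neq 0$, and reduce the characterization of $\varphi$ to the statement that pluriharmonic functions on $\cM_g$ are constant (the paper gets this directly from the Satake compactification having boundary of codimension $\ge 2$, rather than via $b_1(\cM_g)=0$ and $\Gamma(\cM_g,\cO)=\C$; your route is workable but you would still end up invoking essentially the same compactification/extension input to kill the resulting holomorphic function).

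There is, however, a genuine error in your treatment of $g=2$, and the first assertion of the theorem is \emph{not} restricted to $g \ge 3$. You claim that $\kappa_1 \neq 0$ in $H^2(\cM_2;\R)$ and conclude that $\tR^2(\cM_2)$ contains no nonzero exact form. This is false: $\lambda_1$ is torsion in $\Pic(\cM_2)\otimes\Q$ (from the relation $10\lambda = \delta_0+2\delta_1$ on $\overline{\cM}_2$), so $\kappa_1 = 12\lambda_1 = 0$ in $H^2(\cM_2;\Q)$ --- consistently with the paper's relation \eqref{eq:special_rel_g=2}, $8\nu + 12 e_1^A = 0$ in $\tR^2(\cM_2)$, which forces $\nu - e_1^A = -\tfrac52 e_1^A$ and hence $20\kappa_1=0$ in cohomology. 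Thus for $g=2$ the one-dimensional space $\tR^2(\cM_2)=\R\cdot e_1^A$ consists \emph{entirely} of exact forms, and the content of the theorem in this case is that $\nu - e_1^A = \tfrac{1}{\pi\i}\del\delbar\varphi$ is \emph{nonzero}, so that it spans this line. Your cohomological argument gives no information here (the class map is identically zero), and the nonvanishing cannot be deduced from Lemma~\ref{prop:exist_unique_potential} either, since that lemma is only available for $g\ge3$. The paper supplies the missing input separately (Lemma~\ref{prop:non-zero}): $\varphi$ is nonconstant by its boundary asymptotics, and for $g=2$ one uses the eigenvalue equation $(\Delta-5)\varphi=0$ to rule out $\del\delbar\varphi=0$. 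Without some such argument your proof of the first assertion fails for $g=2$, and your closing remark about $g=2$ contradicts the statement being proved.
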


\subsection{Future directions}
Using a counting argument on contracted graphs, it is possible to show that for all $d \in \Z_{\ge 0}$ there exists a polynomial $f_{d}$ of degree $2d$ such that  for all $g \geq 2$, $r \geq 0$ the bound $\dim \tR^{2d}(\cC_g^r) \leq f_d(r)$ holds. The polynomials $f_d$ can in principle be computed. We refer to \cite{thesis} for details.

Theorem~\ref{thm:degree_two} shows that $\dim \tR^2(\cC_g^r)$ is essentially given by a quadratic polynomial in~$r$. Unfortunately, we have found it complicated to obtain precise information about (the growth behavior of the dimensions of) the $\tR^*(\cC_g^r)$ in higher cohomological degrees. 

A possible starting point might be to first obtain a better understanding of the homogeneous ideal $I^*(\cC_g^r) \subset \tR^*(\cC_g^r)$ of exact tautological forms, as our knowledge of the usual tautological rings gives us information about the quotient rings \(RH^*(\cC_g^r)\otimes_\Q \R\). For example, it would be interesting  to find generators for the ideals $I^*(\cC_g^r)$.

Other possible directions for future research could be to extend our constructions and results to the setting of \emph{cochains} for suitable mapping class groups, and to the setting of the moduli spaces of marked \emph{stable} curves.
 
 \renewcommand*{\thethm}{\arabic{section}.\arabic{thm}}

\subsection{Overview of the paper} 

Sections~\ref{sec:prelim}--\ref{sec:delignepairing} are used to set notations and to review known results. In Section~\ref{sec:canonical_h} we properly introduce the forms $h$ and $e^A$ and discuss some of their basic properties. In Section~\ref{sec:proof_AB} we prove Theorems~\ref{thm:jacobian} and~\ref{thm:char_alt}. In Sections~\ref{sec:marked_graphs}--\ref{sec:taut_forms_contr} we carry out foundational work for our proofs of Theorems~\ref{thm:finite_dim}, \ref{thm:degree_two} and~\ref{thm:ZK_inv}, which are then presented in Sections~\ref{sec:proof_fin_dim} and~\ref{sec:2forms}. \\

\noindent \textbf{Acknowledgments.} We thank Nariya Kawazumi for a question and discussion that initiated the research done for this paper. We thank Carel Faber, David Holmes and Dan Petersen for helpful remarks. The research for this project was financed by TOP grant 613.001.401 of the Dutch Research Council (NWO).

\section{Preliminaries} \label{sec:prelim}

In this paper we work with stacks over the category \textbf{CMan} of complex manifolds. For a thorough discussion of this notion, and of the results below, we refer to \cite[Chapter~2]{thesis} and the references therein.

\subsection{Stacks} Roughly speaking, a \emph{stack} (over \textbf{CMan}) is a category $\cX$ equipped with a functor $F \colon \cX \to \mathbf{CMan}$ that allows base changes, gluing of isomorphisms, and gluing of objects. A complex manifold $S$ becomes itself naturally a stack by considering the category of complex manifolds over $S$; the structure functor $F$ in this case is the functor that forgets the base manifold $S$. 

Stacks form a $2$-category. One has a natural notion of $2$-cartesian diagrams of stacks. When $\cX$, $\cS$ are stacks the category of morphisms from $\cX$ to $\cS$ is denoted $\cS(\cX)$. A morphism of stacks \(f:\cX\to\cS\) is called \emph{representable} if for each complex manifold $S$ and each morphism of stacks \(\Phi \colon S\to \cS\)  there exists a 2-cartesian diagram of the form
\[
	\begin{tikzcd}
		X \arrow[d] \arrow[r] \arrow[rd,phantom,"\square"] & \cX \arrow[d, "f"] \\
		S \arrow[r, "\Phi"']   & \cS               
	\end{tikzcd}
\]
with \(X\)  a complex manifold. 

Let P be a property of morphisms of complex manifolds that is compatible with base change. Then we say a morphism  \(f:\cX\to\cS\) of stacks has property P if it is representable and for each $2$-cartesian diagram as above the morphism of complex manifolds \(X\to S\) has property P. In particular, one can talk about   a morphism of stacks being a \emph{submersion}, being \emph{proper}, being \emph{surjective}, or being a \emph{family of Riemann surfaces}.

\subsection{Moduli stacks of curves} Stacks that are central in this paper are the stack $\cM_g$ of families $f \colon \cC \to S$ of compact connected Riemann surfaces of genus~$g$ and the stack $\cC_g$ of such families $f \colon \cC \to S$ together with a section $\sigma \colon S \to \cC$. The functor $p \colon \cC_g \to \cM_g$ that forgets the section is a representable morphism of stacks, called the \emph{universal family} of compact connected Riemann surfaces of genus~$g$. 

For each $r \in \Z_{>0}$ the stack $\cC_g^r$ is defined to be the $r$-fold fiber product of the morphism $p$ with itself. The tautological morphisms $f^\phi \colon \cC_g^r \to \cC_g^s$ considered in the introduction are proper morphisms of stacks. 

\subsection{Differential forms on stacks} We denote by $A^*$ the category whose objects are differentiable forms on a complex manifold. If \(\eta\) and \(\omega\) are differential forms on complex manifolds \(T\) and \(S\), respectively, then the morphisms \(\eta\to\omega\) in \(A^*\) are precisely those morphisms \(f:T\to S\) of the underlying manifolds for which \(f^*\omega = \eta\). 

We have a natural functor \(A^* \to \cat{CMan}\) given by sending a differential form to its underlying complex manifold. This functor turns \(A^*\) into a stack over \(\cat{CMan}\). 

When $\cX$ is a stack, a \emph{differential form} on $\cX$ is defined to be a morphism of stacks $\cX \to A^*$. For instance, differential forms on a complex manifold correspond bijectively to differential forms on the associated stack;
differential forms on the stack \(\cM_g\) are differential forms that occur universally on the bases of families of genus \(g\) compact Riemann surfaces. The category $A^*(\cX)$ is a \emph{discrete} category: there are no $2$-morphisms between two differential forms on a given stack, apart from identity morphisms. Hence the notion of equality of differential forms makes sense.

There are natural $d$-, $\partial$- and $\overline{\partial}$-operators on differential forms extending the usual ones, in particular we can talk about \emph{exact} and \emph{closed} differential forms. 

When  \(f:\cX\to\cS\) is a morphism of stacks, we immediately obtain a pullback functor $f^* \colon A^*(\cS) \to A^*(\cX)$. The next lemma says that the pullback functor is well-behaved with respect to submersions.
\begin{lem} \label{lem:submersion_inj} (See \cite[Lemma~2.5.4]{thesis})	Let \(f:\cX\to\cS\) be a surjective submersion of stacks. Then the functor \(f^*: A^*(\cS)\to A^*(\cX)\) is injective.
\end{lem}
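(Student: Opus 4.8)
The plan is to reduce everything to the classical fact that a submersion of complex manifolds admits local sections, after unwinding the definition of a differential form on a stack. Recall that a differential form on a stack $\mathcal{Y}$ is a morphism of stacks $\omega\colon\mathcal{Y}\to A^*$, which amounts to a rule assigning to every complex manifold $T$ and every morphism $\Phi\colon T\to\mathcal{Y}$ a form $\omega_\Phi\in A^*(T)$, compatibly with pullback along morphisms $T'\to T$ of complex manifolds; and two forms on $\mathcal{Y}$ coincide precisely when they agree on all such test manifolds. In this language $(f^*\omega)_\Psi=\omega_{f\circ\Psi}$, so it suffices to show: if $\omega,\omega'$ are forms on $\cS$ with $(f^*\omega)_\Psi=(f^*\omega')_\Psi$ for every $\Psi$ into $\cX$, then $\omega_\Phi=\omega'_\Phi$ for every $\Phi$ into $\cS$.

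So I would fix a complex manifold $S$ and a morphism $\Phi\colon S\to\cS$. As $f$ is representable we may complete it to a $2$-cartesian square with a complex manifold $X$, a morphism $\Psi\colon X\to\cX$ and a morphism $g\colon X\to S$ of complex manifolds with $f\circ\Psi\cong\Phi\circ g$; by the definition of a submersion of stacks, $g$ is a submersion of complex manifolds. Compatibility of the rule $\omega$ with pullback gives $(f^*\omega)_\Psi=\omega_{f\circ\Psi}=\omega_{\Phi\circ g}=g^*\omega_\Phi$, and likewise $(f^*\omega')_\Psi=g^*\omega'_\Phi$, so the hypothesis becomes the equality $g^*\omega_\Phi=g^*\omega'_\Phi$ in $A^*(X)$.

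I would then finish using that a submersion of complex manifolds has a holomorphic section through every point of its source: this produces an open cover $\{U_i\}$ of $S$ (here using that the base change $g$ of our submersion is onto $S$) together with sections $\sigma_i\colon U_i\to X$ of $g$, so that $g\circ\sigma_i$ is the inclusion $U_i\hookrightarrow S$. Applying $\sigma_i^*$ to $g^*\omega_\Phi=g^*\omega'_\Phi$ yields $\omega_\Phi|_{U_i}=\omega'_\Phi|_{U_i}$ for all $i$, hence $\omega_\Phi=\omega'_\Phi$; since $\Phi$ was arbitrary, $\omega=\omega'$, which is the claimed injectivity of $f^*$. I expect no serious obstacle: the geometric input — existence of local sections of a submersion — is standard, and the only thing requiring care is the stack-theoretic bookkeeping, namely forming the base change along an arbitrary test manifold and using that the evaluation of a form on a stack commutes with pullback along morphisms of complex manifolds.
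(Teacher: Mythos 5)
The paper does not actually prove this lemma: it is stated without proof, with the surrounding notions and ``the results below'' deferred to \cite[Chapter~2]{thesis}, so there is no in-paper argument to compare against. Your proof is the natural one and is essentially correct: unwind a form on a stack as a pullback-compatible assignment $\Phi \mapsto \omega_\Phi$ on test manifolds, use representability to produce a $2$-cartesian square with $g \colon X \to S$ a submersion of complex manifolds, observe $(f^*\omega)_\Psi = g^*\omega_\Phi$, and conclude from injectivity of $g^*$ on forms. The one point you should not leave in a parenthesis is surjectivity. A submersion of complex manifolds in the bare sense (surjective differential) need not be surjective on points --- an open immersion is a submersion, and for the corresponding inclusion of an open substack $f^*$ is restriction of forms, which is not injective --- so the lemma is only true, and your open cover $\{U_i\}$ of all of $S$ only exists, if ``submersion'' is read as ``surjective submersion.'' This is clearly the intended convention (every submersion the paper uses, namely the tautological projections and $\cX_g^r \to \cC_g^r$, is surjective, and surjectivity is stable under base change, so it fits the paper's scheme for transporting properties to stacks), but your phrase ``here using that the base change $g$ of our submersion is onto $S$'' is doing real work and should be justified rather than assumed. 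With that convention made explicit, the rest is fine; note also that you could replace the local-sections step by the even more elementary pointwise argument that a surjective submersion is surjective on tangent vectors, so $g^*\eta = 0$ forces $\eta = 0$.
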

When  \(f:\cX\to\cS\) is a proper submersion, we have a natural fiber integral functor $\int_f \colon A^*(\cX) \to A^*(\cS)$ generalizing the usual fiber integral operator \cite[Appendix~II]{Stoll}. In particular we have that the projection formula is satisfied: for all $\omega \in A^*(\cX)$ and $\eta \in A^*(\cS)$ we have the identity
\[ \int_f \left( \omega \wedge f^* \eta \right) = \left( \int_f \omega \right) \wedge \eta \]
in $A^*(\cS)$. Fiber integration satisfies the base change formula for $2$-cartesian diagrams.
\begin{lem} \label{lem:base_change_stacks} (See \cite[Proposition~2.5.9]{thesis})
	Consider a 2-cartesian diagram of stacks
	\[
		\begin{tikzcd}
			\cX' \arrow[d, "f'"'] \arrow[r, "h"] \arrow[rd, "\square", phantom] & \cX \arrow[d, "f"] \\
			\cS' \arrow[r, "g"']                                       & \cS               
		\end{tikzcd}
	\]
	where \(f\) and \(f'\) are proper submersions.
	Let \(\omega\) be a differential form on \(\cX\). Then the following identity holds in $A^*(\cS')$:
	\[g^*\left(\int_f \omega\right) = \int_{f'} h^*\omega.\]
\end{lem}

\subsection{Hermitian line bundles on stacks} Similarly, one has a stack $\cP ic$ of \emph{line bundles} on complex manifolds. Morphisms in $\cP ic$ are cartesian diagrams; when $\cX$ is a stack, a line bundle on $\cX$ is defined to be a morphism of stacks $\cX \to \cP ic$. For instance, a line bundle on the moduli stack \(\cM_g\) is a line bundle that occurs universally on the bases of families of genus \(g\) compact Riemann surfaces. When  \(f:\cX\to\cS\) is a morphism of stacks, we immediately obtain a pullback functor $f^* \colon \cP ic(\cS) \to \cP ic(\cX)$. 

In a very similar vein one has the stack $\overline{\cP ic}$ of \emph{Hermitian line bundles} on complex manifolds. The \emph{Chern form} is realized as a morphism of stacks $c_1 \colon  \overline{\cP ic} \to A^*$. 

\section{Arakelov-Green's function and Arakelov metric} \label{sec:arakmetric}

In this section we introduce the Arakelov-Green's function $G$ of a compact and connected Riemann surface. Also we introduce the Arakelov metric on the holomorphic cotangent line bundle. The main reference for this section is \cite{Arakelov1974}. 

Let $C$ be a compact and connected Riemann surface of genus $g$. We will assume in the sequel that $g \ge 1$. Denote by $\omega_C$ the holomorphic cotangent line bundle of $C$. Then on the space $\omega_C(C)$ of global sections we have a natural  Hermitian inner product, given by the prescription
\begin{equation} \label{defineinnerproduct} ( \eta,\eta' ) \mapsto \frac{\i}{2} \int_C \eta \wedge \bar{\eta}' \, . 
\end{equation}
Let $(\eta_1,\ldots,\eta_g)$ be an orthonormal basis of $\omega_C(C)$. The \emph{Arakelov $(1,1)$-form} of $C$ is defined to be the element
\begin{equation} \label{defArakvolume} \mu := \frac{\i}{2g} \sum_{j=1}^g \eta_j \wedge \overline{\eta}_j \in A^2(C) .
\end{equation}
It follows from the Riemann-Roch theorem that $\mu$ is a volume form on $C$; we clearly have $\int_C \mu =1$. 

When $P \in C$ is a point we denote by $\delta_P$ the Dirac delta current at $P$. The \emph{Arakelov-Green's function} of $C$ is the real-valued generalized function on $C \times C$ uniquely determined by the conditions
\begin{equation} \label{deldelbarg_Ar}
\partial \bar{\partial}_z \, \log G(P,z) =\pi \i \, (\mu(z) - \delta_P(z))
\end{equation}
and
\begin{equation} \label{normalization}
\int_C \log G(P,z) \,\mu(z) = 0
\end{equation}
for all $P  \in C$. An application of Stokes' theorem shows that one has a symmetry property
\begin{equation} \label{symmetry}
G(P,Q) = G(Q,P)
\end{equation}
for all $P$, $Q$ in $C$. Let $U \subset C$ be an open set and let $t \colon U \imto \mathbb{D}$ be a local coordinate where $\mathbb{D}$ denotes a small open disk in $\C$. Then we have a local expansion
\begin{equation}
\log G(P,Q) = \log |t(P) - t(Q)| + O(1)
\end{equation}
for all distinct $P$, $Q \in U$. In this expansion, the $O(1)$-term is a $C^\infty$ function depending on the choice of coordinate. 

Let $\Delta$ denote the diagonal on $C \times C$. The Arakelov-Green's function $G$ induces a natural Hermitian metric $\|\cdot \|$ on the holomorphic line bundle $\cO_{C \times C}(\Delta)$ on $C \times C$ by putting $  \|1\|(P,Q)=  G(P,Q)$ for $P, Q$ in $C$. Here $1$ denotes the canonical global section of $\cO_{C \times C}(\Delta)$. By restriction to vertical or horizontal slices of $C \times C$ we obtain natural Hermitian metrics on the line bundles $\cO_C(P)$ for each $P \in C$.

Let $\Delta$ also denote the diagonal embedding of $C$ into $C \times C$. Recall that we have a canonical  isomorphism
\begin{equation} \label{adjunction}\omega_C^{\otimes -1} \imto \Delta^* \cO_{C \times C}(\Delta)  
\end{equation}
of holomorphic line bundles on $C$ (the adjunction formula). 
By pullback along the isomorphism \eqref{adjunction} and taking the dual one obtains an induced $C^\infty$ metric on $\omega_C$, called the \emph{Arakelov metric}. 
\begin{definition}  \label{def:admissible} A Hermitian line bundle $(L,\|\cdot\|)$ on the Riemann surface $C$ is called \emph{admissible} if its Chern form $c_1(L,\|\cdot\|)$ is a multiple of the Arakelov $(1,1)$-form $\mu$.  
\end{definition}
Equation~\ref{deldelbarg_Ar} can be used to show that each $\cO_C(P)$ with its metric derived from $G$ is admissible. We also have that $\omega_C$ equipped with its Arakelov metric is admissible, as shown in \cite[Section~4]{Arakelov1974}. The dual of an admissible Hermitian line bundle is admissible, and the tensor product of two admissible Hermitian line bundles is admissible.

\section{Deligne pairing and its metric}   \label{sec:delignepairing} \label{sec:metrization}

Let $\cC$ and $S$ be complex manifolds. Let $p \colon \cC \to S$ be a family of compact Riemann surfaces of positive genus. Then following  \cite{Deligne} we have a canonical bi-multiplicative pairing $\pair{L,M}$ for line bundles $L, M$ on $\cC$, with values in line bundles  on $S$. Here and in the following, line bundles are always taken in the holomorphic category. As our construction will show, the formation of the Deligne pairing is compatible with base change. It follows that the notion of Deligne pairing generalizes to the context of families of compact Riemann surfaces over stacks. 

Locally on an open set $U \subset S$ the line bundle $\pair{L,M}$ is generated by symbols $\pair{\ell,m}$ with $\ell$ a nonzero rational section of $L$ on $p^{-1}U$ and $m$ a nonzero rational section of $M$ on $p^{-1}U$, such that the divisors of $\ell, m$ on $p^{-1}U$ have disjoint support. These symbols obey the relations
\begin{equation} \label{relations} \pair{\ell, fm } = f(\divisor \ell)\pair{\ell,m} \, , \quad
\pair{f\ell,m} = f(\divisor m) \pair{\ell, m}  
\end{equation}
for rational functions $f$ on $\cC$. The function $f(\divisor \ell)$ should be interpreted as coming from a norm: when $D$ is an effective relative Cartier divisor on $\cC$, then we put $f(D)=\mathrm{Nm}_{D/S}(f)$. The Weil reciprocity law $f(\divisor g)=g(\divisor f)$ on compact Riemann surfaces can be used to show that this construction by generators and relations indeed gives a line bundle on $S$.  Let $a, b \in H^2(\cC,\Q)$ be the Chern classes of the line bundles $L, M$ on $\cC$. Then the Chern class of $\pair{L,M}$ is equal to $p_*(a \cup b) \in H^2(S,\Q)$.

Let $P \colon S \to \cC$ be a section of $p$. Let $L, M$ be line bundles on $\cC$, and let $N$ be a line bundle on $S$. Then we have canonical isomorphisms
\begin{equation} \label{canisoms} \pair{M,p^*N} \imto N^{\otimes \deg M} \, , \quad \pair{L,M} \imto \pair{M,L} \, , \quad \pair{\cO_\cC(P),L} \imto P^* L 
\end{equation}
of line bundles on $S$. 

Assume now that each of $L, M, N$ are equipped with Hermitian metrics. Then by \cite[Section 6]{Deligne} the Deligne pairing $\pair{L,M}$ has a canonical structure of Hermitian line bundle, which can be given explicitly as follows. Let $\ell$, $m$ be non-zero rational sections of $L$ resp.\ $M$ with disjoint support. Then we set
\begin{equation} \label{defmetricpairing}
\log \| \pair{\ell,m} \|  := (\log\|m\|) [\divisor \ell] + \int_p \log \|\ell\| \, c_1(M)
\end{equation}
as functions on $S$.
We have a symmetry relation $\|\pair{\ell,m}\|=\|\pair{m,\ell}\|$, cf. \cite[Section~6.3]{Deligne}. It follows from (\ref{defmetricpairing}) that for rational functions $f$ on $\cC$ we have
\[ \log\|\pair{\ell,fm}\| = \log \|\pair{\ell,m}\| + (\log|f|)[\divisor \ell]  ,  \]
showing that the norm $\|\cdot \|$ is compatible with the relations (\ref{relations}). 

The canonical isomorphisms
\begin{equation} \label{delignenorm} \pair{M,p^*N} \imto N^{\otimes \deg M} \, , \quad \pair{L,M} \imto \pair{M,L}  
\end{equation}
from (\ref{canisoms}) are easily seen to be isometries. For the third isomorphism from (\ref{canisoms}) we have to be a little careful. First of all, we equip $\cO_\cC(P)$ with the Hermitian metric derived from the Arakelov Green's function in the fibers. 
\begin{prop} \label{prop:fiberwise} Assume that the Hermitian line bundle $L$ is fiberwise admissible (in the sense of Definition~\ref{def:admissible}) with respect to~$p$. Then the canonical isomorphism $\pair{\cO_\cC(P),L} \imto P^* L $ from (\ref{canisoms}) is an isometry.
\end{prop}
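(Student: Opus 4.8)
The plan is to make both the canonical isomorphism and the Deligne metric completely explicit and to check that they agree on a local generator. Work locally over an open subset $U \subseteq S$. After shrinking $U$ I can choose a nonzero rational section $\ell$ of $L$ on $p^{-1}U$ whose fiberwise divisor is disjoint from the section $P(U)$. Writing $1_P$ for the canonical rational section of $\cO_\cC(P)$, whose divisor is the relative Cartier divisor $P(S)$, the symbol $\pair{1_P,\ell}$ then generates $\pair{\cO_\cC(P),L}$ over $U$; and by construction the third canonical isomorphism of \eqref{canisoms} sends $\pair{1_P,\ell}$ to the section $P^*\ell$ of $P^*L$.

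Next I would evaluate the Deligne metric on this generator using \eqref{defmetricpairing}, taking for Deligne's first section the canonical section $1_P$ of $\cO_\cC(P)$ and for the second the chosen $\ell$:
\[ \log\bigl\|\pair{1_P,\ell}\bigr\| = (\log\|\ell\|)[\divisor 1_P] + \int_p \log\|1_P\|\, c_1(L). \]
Since $\divisor 1_P = P(S)$ is the image of the section taken with multiplicity one, and since $P^*L$ carries the pullback metric, the first term is exactly $P^*(\log\|\ell\|) = \log\|P^*\ell\|$. So the whole statement reduces to showing that the fiber integral $\int_p \log\|1_P\|\, c_1(L)$ vanishes identically on $S$.

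This last point is the heart of the argument and the only place the hypotheses are used. Restrict to the fiber $C$ over a point $s \in S$. The metric on $\cO_\cC(P)$ is by assumption the one coming from the Arakelov Green's function in the fibers, so $\log\|1_P\|(z) = \log G(P(s),z)$ on $C$. As $L$ is fiberwise admissible, its fiberwise Chern form $c_1(L|_C)$ is a constant multiple of the Arakelov form $\mu$; integrating over $C$ and using $\int_C \mu = 1$ identifies the constant as $\deg(L|_C)$. Hence
\[ \int_C \log\|1_P\|(z)\, c_1(L|_C)(z) = \deg(L|_C)\int_C \log G(P(s),z)\,\mu(z) = 0 \]
by the normalization \eqref{normalization} of the Arakelov Green's function. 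Therefore $\int_p \log\|1_P\|\, c_1(L) = 0$, so $\|\pair{1_P,\ell}\| = \|P^*\ell\|$, and the canonical isomorphism is an isometry. I do not expect a genuine obstacle here: the only care needed is bookkeeping with conventions — matching the normalization built into \eqref{defmetricpairing}, the normalization $\int_C \mu = 1$, and the chosen metric on $\cO_\cC(P)$ — after which the vanishing is immediate.
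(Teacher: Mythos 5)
Your proposal is correct and follows essentially the same route as the paper: evaluate the Deligne metric \eqref{defmetricpairing} on the generator $\pair{1_P,\ell}$, identify the first term with $\log\|P^*\ell\|$, and kill the fiber integral using admissibility of $L$ together with the normalization \eqref{normalization}. You are in fact slightly more explicit than the paper about where fiberwise admissibility enters (reducing $c_1(L|_C)$ to $\deg(L|_C)\,\mu$ before invoking the normalization), which is a welcome clarification but not a different argument.
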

\begin{proof} Denote by $1_P$ the canonical global section of $\cO_\cC(P)$. Let $\ell$ be any nonzero rational section of $L$ with support away from $P$. Then by the definition of the metric on the Deligne pairing in (\ref{defmetricpairing}) and by the normalization condition (\ref{normalization}) we find
\begin{equation} \begin{split}
\log \| \pair{1_P,\ell} \| & = (\log \|\ell\|)[\divisor 1_P] + \int_p \log \|1_P\| \, c_1(L) \\
 & = \log \|P^*\ell\| +  \int_p \log G(P,-) \, c_1(L) \\
 & =  \log \|P^*\ell\| . \end{split} \end{equation}
The proposition follows.
\end{proof}
We note that $\cO_\cC(P)$ is itself fiberwise admissible with respect to~$p$. Also the relative cotangent bundle $\omega$ of $\cC$ over $S$ is fiberwise admissible with respect to~$p$, if one equips $\omega$ with the fiberwise Arakelov metric. 

We have the following useful expression for the Chern form of the Deligne pairing. 
\begin{prop} \label{c1deligne}
Let $p \colon \cC \to S$ be a family of compact Riemann surfaces, and $L$ and $M$ two Hermitian line bundles on $\cC$. Let $\langle L, M \rangle$ be the Deligne pairing of $L, M$ along $p$, equipped with its Hermitian metric determined by (\ref{defmetricpairing}). Then the equality of differential forms
\[ c_1( \langle L, M \rangle) = \int_p c_1(L) \wedge c_1(M)  \]
holds in $A^2(S)$.
\end{prop}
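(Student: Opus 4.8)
The plan is to check the identity locally on $S$: one shrinks $S$ until $\langle L,M\rangle$ has a convenient nowhere-vanishing section, computes the norm of that section by means of~(\ref{defmetricpairing}), and applies $\tfrac{1}{\pi\i}\partial\overline{\partial}$. Since both sides of the claimed equality are differential forms on $S$, it suffices to work on an arbitrarily small open $S$. Using in addition that the Deligne pairing and the right-hand side are both bi-additive in $L$ and in $M$, that (after shrinking $S$) every line bundle on $\cC$ is a difference of line bundles of the form $\cO_\cC(D)$ for a relative Cartier divisor $D$, and a Bertini-type argument (a generic section of a sufficiently positive twist has simple zeros on a prescribed fibre, hence on nearby fibres), one reduces to the case $L=\cO_\cC(D)$, $M=\cO_\cC(E)$ where $D$ and $E$ are reduced relative Cartier divisors that are disjoint and finite \'etale over $S$. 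Write $q:=p|_D\colon D\to S$ for the resulting finite \'etale map, and $1_D$, $1_E$ for the canonical sections. Then $\langle 1_D,1_E\rangle$ is a nowhere-vanishing section of $\langle L,M\rangle$, so that $c_1(\langle L,M\rangle)=\tfrac{1}{\pi\i}\partial\overline{\partial}\log\|\langle 1_D,1_E\rangle\|$, and~(\ref{defmetricpairing}) yields
\[
\log\|\langle 1_D,1_E\rangle\| \;=\; (\log\|1_E\|)[D] \;+\; \int_p \log\|1_D\|\,c_1(M) ,
\]
where $(\log\|1_E\|)[D]$ is the pushforward along $q$ of the restriction to $D$ of $\log\|1_E\|$ — a function that is smooth near $D$ because $D\cap E=\emptyset$.

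The next step is to apply $\tfrac{1}{\pi\i}\partial\overline{\partial}$ to each of the two summands, reading the identities on $\cC$ at the level of currents. For the first summand: $q$ is \'etale, so $q_*$ commutes with $\partial\overline{\partial}$; and the Poincar\'e--Lelong formula, in the sign normalization of Section~\ref{sec:arakmetric}, gives $\tfrac{1}{\pi\i}\partial\overline{\partial}\log\|1_E\|=c_1(M)-[E]$, whose restriction to $D$ is $c_1(M)|_D$ because $D\cap E=\emptyset$; hence the first summand contributes $q_*\!\left(c_1(M)|_D\right)$. For the second summand: $\partial\overline{\partial}$ commutes with fiber integration along the proper submersion $p$ (a standard property of the fiber integral, valid also for currents), and $\tfrac{1}{\pi\i}\partial\overline{\partial}\log\|1_D\|=c_1(L)-[D]$ by Poincar\'e--Lelong, so the second summand contributes $\int_p\!\left(c_1(L)-[D]\right)\wedge c_1(M)=\int_p c_1(L)\wedge c_1(M)-\int_p [D]\wedge c_1(M)$. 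Finally $\int_p[D]\wedge c_1(M)=q_*\!\left(c_1(M)|_D\right)$ straight from the definition of fiber integration applied to the closed embedding $D\hookrightarrow\cC$ (equivalently, by the projection formula). The two copies of $q_*(c_1(M)|_D)$ cancel, leaving $c_1(\langle L,M\rangle)=\int_p c_1(L)\wedge c_1(M)$, and the general case follows from the bi-additive reductions made above.

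The points that require the most care are the manipulations with currents on $\cC$ — the Poincar\'e--Lelong identity, and the commutation of $\partial\overline{\partial}$ with fiber integration of currents — together with the preliminary reductions, where it is the Bertini-type step that renders $(\log\|1_E\|)[D]$ honestly smooth and keeps the computation transparent. As a consistency check, recall that the Chern \emph{class} of $\langle L,M\rangle$ equals $p_*(c_1(L)\cup c_1(M))$, so the two sides of the proposition are cohomologous a priori; the content of the proposition is the upgrade to an equality of differential forms, and this is exactly what the explicit formula~(\ref{defmetricpairing}) for the metric on the Deligne pairing makes possible.
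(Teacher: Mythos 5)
Your argument is correct, but it is not the route the paper takes: the paper disposes of Proposition~\ref{c1deligne} in a single line by citing \cite[Proposition~6.6]{Deligne}, so what you have written is in effect a self-contained reconstruction of the proof that the paper outsources. Your computation checks out. The reduction by (isometric) bimultiplicativity of the pairing and bi-additivity of $\int_p c_1(L)\wedge c_1(M)$, combined with a Bertini-type argument, to the case $L=\cO(D)$, $M=\cO(E)$ with $D,E$ reduced, disjoint and \'etale over a shrunken base is legitimate, since the identity to be proved is local on $S$ (though you should note that producing such divisors needs a relatively ample twist, available after shrinking via a local section of $p$, and that the metrics on the factors may be chosen arbitrarily subject to their tensor product being the given one). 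Your sign normalization $\tfrac{1}{\pi\i}\partial\overline{\partial}\log\|s\|=c_1-[\divisor s]$ is the one forced by (\ref{deldelbarg_Ar}); the commutation of $\partial\overline{\partial}$ with $q_*$ for \'etale $q$ and with $\int_p$ for the proper submersion $p$ are both standard; and the cancellation of the two copies of $q_*\bigl(c_1(M)|_D\bigr)$ is exactly what one expects from the symmetry of the metric in (\ref{defmetricpairing}). What your route buys is an explicit, hands-on verification directly from the formula (\ref{defmetricpairing}), which makes transparent why the result holds at the level of forms and not merely in cohomology; what the citation buys is brevity, and the fact that Deligne's argument works with arbitrary rational sections at the level of currents without the \'etale/disjointness normalization.
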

\begin{proof} This is \cite[Proposition 6.6]{Deligne}.
\end{proof}
Finally, we briefly discuss the connection with the Poincar\'e bundle on the Jacobian. We refer to \cite{MB} for an extensive discussion of this connection. 

Let $\cJ \to S$ denote the family of Jacobians associated to the family of compact Riemann surfaces $\cC \to S$. Let $\cP$ denote the Poincar\'e bundle on $\cJ \times_S \cJ^\lor$, equipped with its tautological rigidification along the zero section of the projection on the second coordinate. Let $\lambda \colon \cJ \imto \cJ^\lor$ denote the canonical principal polarization, and write $\cP_0$ for the rigidified line bundle $(\id \times \lambda)^*\cP$ on $\cJ \times_S \cJ$. 

The line bundle $\cP$ carries a canonical Hermitian metric, uniquely characterized by the following properties:
\begin{itemize}
\item the metric is compatible with the given rigidification;
\item the Chern form of the metric is translation invariant in all fibers over $S$.
\end{itemize}
Here, the norm on the trivial line bundle is taken to be the canonical one with $\|1\|=1$.
The canonical Hermitian metric on $\cP$ induces by pullback along $(\id \times \lambda)$ a  Hermitian metric on the line bundle $\cP_0$. 

We have the following fundamental result that we shall use in our proof of Theorem~\ref{thm:jacobian}.
\begin{thm} \label{thm:deligne_poinc} Let $L, M$ be two fiberwise admissible line bundles on $\cC$. Assume that $L, M$ have relative degree zero. Let $[L], [M]$ denote the resulting sections of the Jacobian fibration $\cJ \to S$. We have a canonical isometry of Hermitian line bundles
\[ \left( [L], [M] \right)^* \cP_0^{\otimes -1} \imto \pair{L,M} \]
on $S$. Here, the left hand side is equipped with the metric induced by pullback from the canonical metric on $\cP_0$.
\end{thm}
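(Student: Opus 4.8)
The plan is to split the statement into its underlying holomorphic isomorphism, which is classical, and a metric refinement, and to pin down the metric by means of the uniqueness property of the canonical metric on $\cP$ recorded above. First I would reduce to a universal situation. Both constructions entering the theorem are compatible with base change: the Deligne pairing together with the metric \eqref{defmetricpairing} by the discussion of this section, and the Poincar\'e bundle together with its canonical metric by its very characterization. Since a morphism of Hermitian line bundles on $S$ is an isometry exactly when it is one in each fibre over $S$, it suffices to treat the universal case: the base is $\cJ_g\times_{\cM_g}\cJ_g$ (after a harmless localization on $\cM_g$ we may assume a universal relative-degree-zero line bundle exists), the family $\cC\to S$ is the pullback $\pi\colon\cC'\to\cJ_g\times_{\cM_g}\cJ_g$ of the universal curve, and $L,M$ are the two tautological relative-degree-zero line bundles $\mathcal{L}_1,\mathcal{L}_2$ on $\cC'$, each carrying its fibrewise Arakelov metric. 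Here I would also note that, once $L$ and $M$ have relative degree zero, the metric \eqref{defmetricpairing} on $\pair{L,M}$ does not depend on the chosen fibrewise admissible metrics on $L$ and $M$: two such metrics differ by a factor pulled back from the base, and the resulting change in \eqref{defmetricpairing} involves only the (vanishing) relative degrees of $L$ and $M$. In the universal situation $([L],[M])$ is the identity, so the theorem becomes the assertion that a canonical isomorphism $\pair{\mathcal{L}_1,\mathcal{L}_2}\imto\cP_0^{\otimes-1}$ of Hermitian line bundles on $\cJ_g\times_{\cM_g}\cJ_g$ is an isometry.

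The isomorphism of the underlying line bundles is classical. Restricted to line bundles of relative degree zero the Deligne pairing is bi-multiplicative and, using bi-multiplicativity and the first two isomorphisms of \eqref{canisoms}, depends only on the induced points of $\cJ_g$; it therefore defines a biextension of $(\cJ_g,\cJ_g)$ by $\mathbb{G}_m$. Via the principal polarization $\lambda$ this biextension is canonically identified with the pullback along $\id\times\lambda$ of the Poincar\'e biextension, the precise normalization (whether one lands on $\cP_0$ or on its inverse) being dictated by the sign conventions so as to match the statement; this is the content of the description of $\cP$ as a biextension, for which I would refer to \cite{MB}. This identification is moreover compatible with the rigidifications along the loci $[\mathcal{L}_1]=0$ on the two sides.

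It then remains to identify the metrics, and here I would use the uniqueness of the canonical metric. Its proof is fibrewise and rests on the fact that a smooth function on a compact complex torus with translation-invariant $\partial\overline\partial$ is pluriharmonic, hence constant; the same argument shows that a Hermitian metric on $\cP_0$ is the canonical one as soon as (i) it is compatible with the given rigidification and (ii) its Chern form is translation invariant in the fibres over $\cM_g$. For the metric transported from $\pair{\mathcal{L}_1,\mathcal{L}_2}$, property (i) follows from \eqref{defmetricpairing}: over the locus $[\mathcal{L}_1]=0$ one may take the tautological section $1$ of the rigidified --- hence trivialized --- bundle $\mathcal{L}_1$ for the rational section $\ell$, and then $\div\ell=0$ together with $\int_p c_1(\mathcal{L}_2)=0$ yields $\|\pair{1,m}\|=1$, which by \eqref{delignenorm} matches the trivialization provided by \eqref{canisoms}. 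For property (ii) I would combine Proposition~\ref{c1deligne}, which gives $c_1(\pair{\mathcal{L}_1,\mathcal{L}_2})=\int_\pi c_1(\mathcal{L}_1)\wedge c_1(\mathcal{L}_2)$ on $\cJ_g\times_{\cM_g}\cJ_g$, with a see-saw argument: translating a fibre replaces each $\mathcal{L}_i$ by $\mathcal{L}_i$ tensored with a line bundle that, on each curve fibre, is of degree zero with its Arakelov metric --- and hence has vanishing Chern form there --- and that is pulled back from the base in the remaining directions; since $\int_\pi c_1(\mathcal{L}_i)=0$, the projection formula for fibre integration forces all the resulting cross terms to integrate to zero, so the form is unchanged. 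With (i) and (ii) established the transported metric is the canonical one, and the theorem follows.

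The step I expect to be the main obstacle is property~(ii): one has to check that the see-saw isomorphism can be chosen to be an isometry up to a Hermitian line bundle pulled back from the base, which relies on the fibrewise Arakelov metric on the universal degree-zero line bundle behaving well under translation and tensor product --- ultimately a consequence of the smooth variation of the Arakelov--Green's function in families --- and one has to keep track of bidegrees carefully so that the relevant fibre integrals genuinely vanish rather than merely being closed.
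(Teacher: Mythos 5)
The paper's own proof of this theorem is a single line: it invokes \cite[Corollaire 4.14.1]{MB}, so any genuine argument is necessarily a different route, and yours is a sensible reconstruction of what lies behind that citation. Your decomposition --- reduce to the universal pair over $\cJ_g\times_{\cM_g}\cJ_g$, identify the underlying line bundles via the biextension description of the Deligne pairing of relative-degree-zero bundles, then pin down the metric by the uniqueness characterization of the canonical metric (compatibility with the rigidification plus fiberwise translation invariance of the Chern form) --- is essentially the standard argument, and the preliminary observation that the Deligne metric on $\pair{L,M}$ is insensitive to the choice of fiberwise admissible metrics once both relative degrees vanish is correct and genuinely needed for the reduction. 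What your sketch buys is transparency about exactly which properties of the Arakelov/admissible metrics are used (the normalization \eqref{normalization} enters nowhere here, only fiberwise admissibility and degree zero), at the cost of two points that still require real work. First, in checking compatibility with the rigidification you should note explicitly that an admissible metric on the trivial bundle has $\log\|1\|$ fiberwise pluriharmonic, hence fiberwise constant, which is what lets you pull it out of $\int_p\log\|1\|\,c_1(\mathcal{L}_2)$ and kill the term against the vanishing relative degree. Second, and as you yourself flag, the translation invariance of $\int_\pi c_1(\mathcal{L}_1)\wedge c_1(\mathcal{L}_2)$ is the substantive step: note that fiberwise admissibility of a degree-zero bundle forces the purely fiberwise component of each $c_1(\mathcal{L}_i)$ to vanish, so the fiber integral is governed entirely by the mixed Hodge components, and the cleanest way to conclude is an explicit period computation on $J\times J$ rather than the see-saw argument, which as stated leaves the required isometry-up-to-base-twist unverified. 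Finally, the normalization question (why $\cP_0^{\otimes-1}$ rather than $\cP_0$) is not merely conventional --- it is forced by the sign in the N\'eron/Deligne pairing of degree-zero divisors --- and deserves a one-line check on a pair such as $L=\cO(P-Q)$, $M=\cO(R-T)$. With those points supplied your argument would be a complete substitute for the citation.
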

\begin{proof} This follows from \cite[Corollaire 4.14.1]{MB}.
\end{proof}

\section{The universal case} \label{sec:canonical_h}

Let $g \ge 2$ be an integer. The constructions and results from Sections~\ref{sec:arakmetric} and~\ref{sec:delignepairing} generalize to the setting of the universal compact Riemann surface $p \colon \cC_g \to \cM_g$, following the general remarks in Section~\ref{sec:prelim}. To start with, we have a natural Hermitian metric $\|\cdot\|$ on the  line bundle \(\cO(\Delta)\) on \(\cC_g^2\), by setting $\|1\|(P,Q)=G(P,Q)$ for a pair of points $P, Q$ on a compact Riemann surface $C$, with $G$ the Arakelov-Green's function of $C$.
\begin{definition}
We set
 \begin{equation}
 h:=c_1(\cO(\Delta), \|\cdot\|) \in A^2(\cC_g^2),
 \end{equation}
 the Chern form of the Hermitian line bundle \(\cO(\Delta)\) on \(\cC_g^2\). 
 The $2$-form $h$ represents the class $\Delta$ of the diagonal in $H^2(\cC_g^2,\R)$. 
  \end{definition}
 Let \(\omega = \omega_{\cC_g/\cM_g}\) denote the relative holomorphic cotangent bundle of the family of compact Riemann surfaces \(p \colon \cC_g \to \cM_g\). We endow $\omega$ with the fiberwise Arakelov metric; this turns $\omega$ into a Hermitian line bundle on the stack $\cC_g$. Let $\Delta \colon \cC_g \to \cC_g^2$ also denote the diagonal embedding. Then by construction of the Arakelov metric via the adjunction formula \eqref{adjunction} we arrive at a canonical isometry
\begin{equation} \label{adj_isometry}
\omega^{\otimes-1} \simeq \Delta^*\cO(\Delta)
\end{equation}
of Hermitian line bundles on \(\cC_g\). 
\begin{definition}
We set
\begin{equation}
e^A := c_1(\omega^{\otimes -1}, \|\cdot\|) \in A^2(\cC_g),
\end{equation}
the Chern form of $\omega^{-1}$ equipped with the dual of the Arakelov metric. The $2$-form $-e^A$ represents the  class \(K\) of $\omega$ in \(H^2(\cC_g,\R)\), and we have $e^A = \Delta^*h$. 
\end{definition}
Let \(p_1:\cC_g^2\to\cC_g\) be the projection on the first coordinate. One readily finds the identities
\begin{equation} \label{lem:inteA} 
\int_p e^A = 2-2g \in A^0(\cM_g), \qquad \int_{p_1} h = 1 \in A^0(\cC_g).
\end{equation}
We will also need the following results. 
\begin{lemma} \label{lem:inteAc1L}
	Consider the family of compact Riemann surfaces \(p_1:\cC_g^2\to\cC_g\).
	If \(L\) is a Hermitian line bundle on \(\cC_g^2\) which is fiberwise admissible with respect to \(p_1\), then 
	\[\int_{p_1} h \wedge c_1(L) = \Delta^* c_1(L) \in A^2(\cC_g).\]
	In particular, we have:
	\[\int_{p_1} h^2 = e^A\]
	and for \(i=1,2\) we have
	\[\int_{p_1} h \wedge p_i^* e^A = e^A.\]
\end{lemma}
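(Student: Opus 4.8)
The plan is to deduce the displayed identity directly from the two basic facts about Deligne pairings recorded in Section~\ref{sec:delignepairing}, and then to read off the three special cases by feeding in concrete line bundles. Throughout I view the diagonal $\Delta\colon\cC_g\to\cC_g^2$ as a section of the family $p_1\colon\cC_g^2\to\cC_g$, and I use that $\cO(\Delta)$ carries the fiberwise Arakelov--Green metric, so that $c_1(\cO(\Delta))=h$; all of this is available over the stack $\cM_g$ by the remarks at the beginning of Section~\ref{sec:canonical_h}.

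First I would invoke Proposition~\ref{prop:fiberwise} with the section $P=\Delta$: since $L$ is fiberwise admissible with respect to $p_1$, the canonical isomorphism $\pair{\cO(\Delta),L}\imto\Delta^*L$ is an \emph{isometry} of Hermitian line bundles on $\cC_g$. Comparing Chern forms, and using Proposition~\ref{c1deligne} on the left-hand side, yields
\[ \int_{p_1} h\wedge c_1(L)=c_1\bigl(\pair{\cO(\Delta),L}\bigr)=c_1(\Delta^*L)=\Delta^*c_1(L), \]
which is the main assertion of the lemma.

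For the three ``in particular'' identities I would apply this formula to suitable choices of $L$ and then compute $\Delta^*c_1(L)$ by hand; the only thing that needs checking in each case is the fiberwise admissibility of $L$ with respect to $p_1$, which is a condition on the restriction of $L$ to the fibers $\{Q\}\times C\cong C$ of $p_1$. Taking $L=\cO(\Delta)$ with its Arakelov--Green metric (so $c_1(L)=h$), its restriction to $\{Q\}\times C$ is $\cO_C(Q)$ with the metric coming from the Arakelov--Green's function, which is admissible by the remark following Proposition~\ref{prop:fiberwise}; hence $\int_{p_1}h^2=\Delta^*h=e^A$. Taking $L=p_i^*(\omega^{\otimes-1})$ equipped with the pulled-back metric (so $c_1(L)=p_i^*e^A$): for $i=2$ the restriction to $\{Q\}\times C$ is $\omega_C^{\otimes-1}$ with its Arakelov metric, which is admissible; for $i=1$ it is the trivial line bundle with a flat metric, whose Chern form is $0=0\cdot\mu$, hence admissible as well. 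Since $p_i\circ\Delta=\id_{\cC_g}$ in both cases, $\Delta^*c_1(L)=(p_i\circ\Delta)^*e^A=e^A$, giving $\int_{p_1}h\wedge p_i^*e^A=e^A$.

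No step here is deep: once the Deligne-pairing machinery of Section~\ref{sec:delignepairing} is granted, the argument is essentially bookkeeping. The one place that genuinely needs care --- and where I expect the (small amount of) real work to lie --- is the fiberwise-admissibility verifications, in particular recognizing that along the fibers of $p_1$ the bundle $\cO(\Delta)$ restricts to $\cO_C$ of a single point and $p_1^*\omega^{\otimes-1}$ restricts to a trivial bundle, since Propositions~\ref{prop:fiberwise} and~\ref{c1deligne} cannot be invoked without them.
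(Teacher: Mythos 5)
Your proposal is correct and follows essentially the same route as the paper: Proposition~\ref{c1deligne} plus the isometry of Proposition~\ref{prop:fiberwise} give the main identity, and the special cases follow by taking $L=\cO(\Delta)$ and $L=p_i^*\omega^{\otimes-1}$. The only difference is that you spell out the fiberwise-admissibility checks that the paper leaves implicit (having noted them after Proposition~\ref{prop:fiberwise}), and those checks are carried out correctly.
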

\begin{proof}
	From Proposition~\ref{c1deligne} we obtain
	\[
		\int_{p_1} h \wedge c_1(L) = \int_{p_1} c_1(\cO(\Delta))\wedge c_1(L) = c_1(\inp{\cO(\Delta),L}) = c_1(\Delta^* L) = \Delta^* c_1(L),
	\]
	where the third equality follows Proposition~\ref{prop:fiberwise}.
	The other identities now follow from:
	\[h = c_1(\cO(\Delta)),\quad \text{and}\quad p_i^*e^A = p_i^*c_1(\omega^{\otimes-1}) = c_1(p_i^*\omega^{\otimes-1}).\qedhere\]
\end{proof}

\begin{lemma} \label{lem:p13hp23h}
	Let \(p_{12},p_{13},p_{23}:\cC_g^3\to\cC_g^2\) be the three projections.
	Then 
	\[\int_{p_{12}} p_{13}^*h \wedge p_{23}^* h = h \in A^2(\cC_g^2).\]
\end{lemma}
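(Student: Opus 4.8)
The strategy is to follow the proof of Lemma~\ref{lem:inteAc1L}: we rewrite the fiber integral as the Chern form of a Deligne pairing along $p_{12}$, and then identify that pairing through the canonical isomorphisms \eqref{canisoms}. Regard $p_{12}\colon\cC_g^3\to\cC_g^2$ as the family of compact Riemann surfaces whose fiber over $(C,x_1,x_2)$ is $C$ with the third marked point varying, and write $x_1,x_2,x_3$ for the three marked points of $\cC_g^3$. Put $M:=p_{13}^*\cO(\Delta)=\cO_{\cC_g^3}(\{x_1=x_3\})$ and $N:=p_{23}^*\cO(\Delta)=\cO_{\cC_g^3}(\{x_2=x_3\})$, each equipped with the metric pulled back from the Arakelov metric on $\cO(\Delta)$ defining $h$. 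The divisor $\{x_2=x_3\}$ is the image of the section $s\colon\cC_g^2\to\cC_g^3$, $(x_1,x_2)\mapsto(x_1,x_2,x_2)$, of $p_{12}$, so $N=\cO_{\cC_g^3}(s)$; restricting its metric to a fiber of $p_{12}$ yields $G_C(x_2,-)$, so $N$ is $\cO_{\cC_g^3}(s)$ endowed with the fiberwise metric derived from the Arakelov-Green's function, as required in Proposition~\ref{prop:fiberwise}. On the other hand $M$ restricts on each fiber of $p_{12}$ to $\cO_C(x_1)$ with its metric derived from the Arakelov-Green's function, which is admissible; hence $M$ is fiberwise admissible with respect to $p_{12}$.

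Given this, Proposition~\ref{c1deligne} applied to the family $p_{12}$ gives
\[ \int_{p_{12}} p_{13}^*h\wedge p_{23}^*h \;=\; c_1\bigl(\inp{M,N}\bigr), \]
the Deligne pairing being formed along $p_{12}$. The symmetry isometry $\inp{M,N}\imto\inp{N,M}$ from \eqref{delignenorm}, followed by Proposition~\ref{prop:fiberwise} (applicable by the first paragraph), yields isometries
\[ \inp{M,N}\;\imto\;\inp{N,M}\;=\;\inp{\cO_{\cC_g^3}(s),\,M}\;\imto\;s^*M\;=\;(p_{13}\circ s)^*\cO(\Delta). \]
Since $p_{13}\circ s=\id_{\cC_g^2}$, the right-hand side is $\cO(\Delta)$ with its Arakelov metric, whose Chern form is $h$ by definition. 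Comparing Chern forms proves the lemma.

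The one point requiring care --- the main obstacle, modest as it is --- is the metric bookkeeping in the first paragraph: that the pullback along $p_{23}$ of the Arakelov metric on $\cO(\Delta)$ is genuinely the fiberwise metric on $\cO_{\cC_g^3}(s)$ derived from the Arakelov-Green's function demanded by Proposition~\ref{prop:fiberwise}, and that $M$ is fiberwise admissible with respect to $p_{12}$. Both follow immediately from the defining relations \eqref{deldelbarg_Ar} and \eqref{symmetry} of the Arakelov-Green's function, exactly as these facts enter the proof of Lemma~\ref{lem:inteAc1L}.
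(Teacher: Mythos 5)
Your proof is correct and follows essentially the same route as the paper's: both identify $p_{13}^*\cO(\Delta)$ and $p_{23}^*\cO(\Delta)$ with the line bundles attached to the two tautological sections of $p_{12}$, compute the fiber integral as the Chern form of their Deligne pairing via Proposition~\ref{c1deligne}, and then collapse the pairing to $\cO(\Delta)$ using the symmetry isometry, Proposition~\ref{prop:fiberwise}, and $p_{13}\circ s=\id$. Your explicit verification of the fiberwise admissibility and of the metric identification is a point the paper leaves implicit, but the argument is the same.
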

\begin{proof}
	Let \(\sigma_1,\sigma_2:\cC_g^2\to \cC_g^3\) be the two canonical sections of \(p_{12}\), such that \(p_3\circ \sigma_i = p_i: \cC_g^2\to\cC_g\) for \(i=1,2\).
	Notice that \(p_{13}\circ \sigma_2:\cC_g^2\to\cC_g^2\) is the identity.
	Endow the induced line bundles \(\cO(\sigma_1), \cO(\sigma_2)\) on \(\cC_g^3\) with their canonical metrics.
	We use Proposition~\ref{c1deligne}  to obtain
	\begin{align*}
		\int_{p_{12}} p_{13}^* h \wedge p_{23}^* h &= \int_{p_{12}} p_{13}^* c_1(\cO(\Delta)) \wedge p_{23}^* c_1(\cO(\Delta))\\
							   &= \int_{p_{12}} c_1(\cO(\sigma_1)) \wedge c_1(\cO(\sigma_2)) \\
							   &= c_1(\inp{\cO(\sigma_1),\cO(\sigma_2)}) \\
							   &= \sigma_2^* c_1(\cO(\sigma_1))\\
							   &= \sigma_2^* p_{13}^* c_1(\cO(\Delta))\\
							   &= c_1(\cO(\Delta))\\
							   &= h.\qedhere
	\end{align*}
\end{proof}

\begin{definition}
We set
\begin{equation}
e_1^A := \int_p (e^A)^2  \in A^2(\cM_g).
\end{equation}
Note that $e_1^A$ equals the Chern form of $\pair{\omega,\omega}$, when the latter is equipped with the Deligne pairing metric, by Proposition~\ref{c1deligne}.  The $2$-form $e_1^A$ represents the kappa class \(\kappa_1\) in \(H^2(\cM_g,\R)\). 
\end{definition}

\section{Proof of Theorems~\ref{thm:jacobian} and~\ref{thm:char_alt}} \label{sec:proof_AB}

We continue to assume that $g \ge 2$. 
Let $\cJ_g \to \cM_g$ denote the universal Jacobian in genus~$g$, i.e., the Jacobian fibration associated to the universal family  $p \colon \cC_g \to \cM_g$. Let $\cP$ denote the Poincar\'e bundle on $\cJ_g \times_{\cM_g} \cJ^\lor_g$, equipped with its tautological rigidification along the zero section of the projection on the second coordinate. Let $\lambda \colon \cJ_g \imto \cJ^\lor_g$ denote the canonical principal polarization, write $\cP_0$ for the rigidified line bundle $(\id,\lambda)^*\cP$ on $\cJ_g \times_{\cM_g} \cJ_g$ and write $\cB$ for the pullback of the line bundle $\cP_0$ along the diagonal. This is a rigidified line bundle on $\cJ_g$.

Following the discussion at the end of Section~\ref{sec:delignepairing}, we see that the line bundle $\cP$ carries a canonical  Hermitian metric, uniquely characterized by the following properties:
\begin{itemize}
\item the metric is compatible with the given rigidification;
\item the Chern form of the metric is translation invariant in all fibers over $\cM_g$.
\end{itemize}
The canonical Hermitian metric on $\cP$ induces by pullback a Hermitian metric on the line bundles $\cP_0$ and $\cB$. We observe that the Chern form of $\cB$ is equal to the form $2 \omega_0$ on $\cJ_g$ that we introduced in \ref{subsec:statement_main}. 

Let \(r\geq 1\) be any non-negative integer, let $n$ be any integer, and let \(m = (m_1,\dots,m_r)\) be an \(r\)-tuple of integers whose sum equals $(2g-2)n$.
These data give rise to a morphism of stacks
\begin{equation} \label{eqn:F_m}
 F_m \colon \cC_g^r \to \cJ_g 
\end{equation}
given by sending a family $p \colon \cC \to S$ of compact Riemann surfaces of genus $g$ together with an $r$-tuple of sections $(\sigma_1,\ldots,\sigma_r)$  to the section of the associated Jacobian fibration over $S$ given by the relative degree zero line bundle $\cO_\cC(m_1\sigma_1+\cdots+m_r\sigma_r) \otimes \omega_{\cC/S}^{\otimes -n}$ on~$\cC$. Here $\omega_{\cC/S}$ denotes the relative holomorphic cotangent bundle of $\cC$ over $S$.

\begin{thm} \label{prop:fmisometry} Let $\omega=\omega_{\cC_g/\cM_g}$ denote the relative holomorphic cotangent bundle of the universal family of compact Riemann surfaces of genus~$g$, equipped with the fiberwise Arakelov metric.
We have a canonical isometry of Hermitian line bundles on \(\cC_g^r\):
	\[ F_m^*\cB^{\otimes-1} \simeq \bigotimes_{1\leq i<j\leq r} p_{ij}^* \cO(\Delta)^{\otimes 2 m_i m_j} \otimes \bigotimes_{i=1}^r p_i^*\omega^{\otimes-m_i^2 - 2m_i n} \otimes \pair{ \omega, \omega}^{\otimes n^2}. \]
\end{thm}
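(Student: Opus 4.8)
The plan is to realize $F_m$ as the classifying map of an explicit fiberwise admissible line bundle of relative degree zero on the universal curve over $\cC_g^r$, use Theorem~\ref{thm:deligne_poinc} to convert $F_m^*\cB^{\otimes -1}$ into a Deligne pairing, and then expand that pairing by bi-multiplicativity, evaluating the resulting elementary pairings by means of the canonical isometries collected in Section~\ref{sec:delignepairing}.

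First I would set up notation. Let $q\colon\cC_g^{r+1}=\cC_g^r\times_{\cM_g}\cC_g\to\cC_g^r$ be the universal curve over $\cC_g^r$, with tautological sections $\sigma_1,\dots,\sigma_r$, where $\sigma_i$ is the diagonal between the $i$-th and the last factor; write $\omega'=\omega_{\cC_g^{r+1}/\cC_g^r}$ with its fiberwise Arakelov metric, and equip each $\cO(\sigma_i)$ with the fiberwise Arakelov--Green metric. Then $\cL_m:=\cO(m_1\sigma_1+\cdots+m_r\sigma_r)\otimes(\omega')^{\otimes -n}$ is a Hermitian line bundle on $\cC_g^{r+1}$ that is fiberwise admissible with respect to $q$ (products and duals of admissibles are admissible) and has relative degree $\sum_i m_i-n(2g-2)=0$ by the hypothesis on $m$; by the very definition of $F_m$, the classifying morphism $\cC_g^r\to\cJ_g$ associated to $\cL_m$ is $F_m$. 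Applying Theorem~\ref{thm:deligne_poinc} with $L=M=\cL_m$ over the base $\cC_g^r$ (the stacky statement reduces to the manifold one as in Section~\ref{sec:prelim}) gives a canonical isometry $([\cL_m],[\cL_m])^*\cP_0^{\otimes -1}\simeq\pair{\cL_m,\cL_m}$; since $([\cL_m],[\cL_m])$ is $F_m$ composed with the diagonal of $\cJ_g$, and $\cB$ is by definition the pullback of $\cP_0$ along that diagonal, this is an isometry $F_m^*\cB^{\otimes -1}\simeq\pair{\cL_m,\cL_m}$.

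It then remains to expand the right-hand side. Bi-multiplicativity of the Deligne pairing together with the symmetry isometry $\pair{A,B}\simeq\pair{B,A}$ from~\eqref{delignenorm} gives
\begin{align*}
\pair{\cL_m,\cL_m}\simeq{}&\bigotimes_{1\le i<j\le r}\pair{\cO(\sigma_i),\cO(\sigma_j)}^{\otimes 2m_im_j}\otimes\bigotimes_{i=1}^r\pair{\cO(\sigma_i),\cO(\sigma_i)}^{\otimes m_i^2}\\
&\otimes\bigotimes_{i=1}^r\pair{\cO(\sigma_i),\omega'}^{\otimes -2m_in}\otimes\pair{\omega',\omega'}^{\otimes n^2},
\end{align*}
and I would then evaluate the four types of factor. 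Since $\cO(\sigma_j)$ and $\omega'$ are fiberwise admissible, Proposition~\ref{prop:fiberwise} gives isometries $\pair{\cO(\sigma_i),\cO(\sigma_j)}\simeq\sigma_i^*\cO(\sigma_j)$ and $\pair{\cO(\sigma_i),\omega'}\simeq\sigma_i^*\omega'$. As the Arakelov--Green metric is compatible with base change, $\cO(\sigma_j)$ is the pullback of $(\cO(\Delta),\|\cdot\|)$ along the projection $\cC_g^{r+1}\to\cC_g^2$ onto the $j$-th and last coordinates; composing with $\sigma_i$, this projection becomes $\Delta\circ p_i$ (with $\Delta\colon\cC_g\to\cC_g^2$ the diagonal) when $i=j$, and factors through $p_{ij}$, up to the symmetry of $\cO(\Delta)$, when $i\ne j$. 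Hence $\sigma_i^*\cO(\sigma_i)\simeq p_i^*\Delta^*\cO(\Delta)\simeq p_i^*\omega^{\otimes -1}$ by the adjunction isometry~\eqref{adj_isometry}, and $\sigma_i^*\cO(\sigma_j)\simeq p_{ij}^*\cO(\Delta)$ for $i\ne j$. Similarly $\sigma_i^*\omega'\simeq p_i^*\omega$ isometrically (Arakelov metrics are compatible with base change), and $\pair{\omega',\omega'}$ is the pullback along $p\colon\cC_g^r\to\cM_g$ of $\pair{\omega,\omega}$ by base change of the Deligne pairing. Substituting these identifications into the display and collecting the two resulting powers of $p_i^*\omega$ into $p_i^*\omega^{\otimes -m_i^2-2m_in}$ produces exactly the asserted isometry. (When $r=0$ the hypothesis forces $n=0$, $\cL_m$ is trivial, and both sides are canonically $\cO$.)

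The argument is essentially mechanical once Theorem~\ref{thm:deligne_poinc} is available; the one point that requires care, and which I regard as the main obstacle, is that each identification above must be promoted from an isomorphism of line bundles to an \emph{isometry}. This is precisely where Proposition~\ref{prop:fiberwise}, the symmetry isometries of~\eqref{delignenorm}, the adjunction isometry~\eqref{adj_isometry}, and the base-change compatibility of the Arakelov and Arakelov--Green metrics are invoked in turn; keeping this metric bookkeeping straight, rather than anything conceptual, is the real work.
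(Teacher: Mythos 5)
Your proposal is correct and follows essentially the same route as the paper's proof: the same auxiliary bundle $L_m$ on $\cC_g^{r+1}$, the same appeal to Theorem~\ref{thm:deligne_poinc} to identify $F_m^*\cB^{\otimes -1}$ with $\pair{L_m,L_m}$, and the same bimultiplicative expansion evaluated via Proposition~\ref{prop:fiberwise}, the symmetry isometries of \eqref{delignenorm}, and the adjunction isometry \eqref{adj_isometry}. Your closing remark correctly identifies the only real work, namely upgrading each identification to an isometry.
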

\begin{proof} We temporarily write $p \colon \cC_g^{r+1} \to \cC_g^r$ for the projection forgetting the last coordinate. We have canonical sections $\sigma_i \colon \cC_g^r \to \cC_g^{r+1}$ of $p$ for $i=1,\ldots, r$ obtained by repeating the $i$-th coordinate. By a slight abuse of notation we write $\omega$ also for the relative cotangent bundle of $p$. We then set
\[ L_m = \cO(m_1\sigma_1 + \cdots + m_r\sigma_r) \otimes \omega^{\otimes -n} , \]
a line bundle on $\cC_g^{r+1}$ of relative degree zero over $\cC_g^r$. Following the constructions and results from 
Sections~\ref{sec:prelim}--\ref{sec:delignepairing} we have a natural Hermitian metric on $L_m$, obtained from the metric determined by the Arakelov-Green's function on the line bundles $\cO(\sigma_i)$ and from the fiberwise Arakelov metric on~$\omega$. The Hermitian line bundle $L_m$ is fiberwise admissible. By Theorem~\ref{thm:deligne_poinc} we have  a canonical isometry
\[F_m^*\cB^{\otimes-1} \isoto \inp{L_m, L_m},\]
where the Deligne pairing is taken along~$p$.
As the Deligne pairing is bimultiplicative, we find another canonical isometry
\[\inp{L_m,L_m} \isoto \bigotimes_{i=1}^r\bigotimes_{j=1}^r \inp{\cO(\sigma_i),\cO(\sigma_j)}^{\otimes m_i m_j} \otimes \bigotimes_{i=1}^r p_i^* \omega^{\otimes -2nm_i} \otimes \pair{\omega,\omega}^{\otimes n^2} .\]
Now for all \(1\leq j\leq r\) we have a canonical isometry
\[\cO(\sigma_j) \isoto p_{j,r+1}^* \cO(\Delta)\]
so taking the pullback along \(\sigma_i\) yields a string of canonical isometries
\[ \inp{\cO(\sigma_i),\cO(\sigma_j)} \simeq \sigma_i^* \cO(\sigma_j) \simeq \sigma_i^* p_{j,r+1}^* \cO(\Delta) \simeq \begin{cases} p_{ji}^*\cO(\Delta) = p_{ij}^*\cO(\Delta) &\text{if }i\neq j\\p_j^*\Delta^*\cO(\Delta) = p_j^* \omega^{\otimes-1} &\text{if }i=j.\end{cases}\]
Here the first isometry follows from Proposition~\ref{prop:fiberwise}. In the last equality we have used the isometry~\eqref{adj_isometry}.
By combining the above canonical isometries we obtain the result.
\end{proof}
The following corollary has Theorem~\ref{thm:jacobian} as an immediate consequence. It is a refinement of \cite[Theorem~11.5]{HainNormal} at the level of forms.
\begin{cor} \label{cor:pullbackomega}
We have the following equality of 2-forms on \(\cC_g^r\):
	\[-2F_m^*\omega_0 = \sum_{1\leq i<j\leq r} 2m_im_j p_{ij}^* h + \sum_{i=1}^r (m_i^2 + 2m_i n) p_i^* e^A + n^2  e^A_1 \in A^2(\cC_g^r).\]
\end{cor}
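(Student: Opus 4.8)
The plan is simply to apply the Chern-form functor $c_1 \colon \overline{\cP ic} \to A^*$ to the canonical isometry of Hermitian line bundles furnished by Theorem~\ref{prop:fmisometry}. Recall that $c_1$ is additive with respect to tensor products, sends a dual to the negative, and commutes with pullback along morphisms of stacks. Hence, applying $c_1$ to
\[ F_m^*\cB^{\otimes-1} \simeq \bigotimes_{1\leq i<j\leq r} p_{ij}^* \cO(\Delta)^{\otimes 2 m_i m_j} \otimes \bigotimes_{i=1}^r p_i^*\omega^{\otimes-m_i^2 - 2m_i n} \otimes \pair{ \omega, \omega}^{\otimes n^2} \]
yields, for $r \ge 1$, the identity
\[ -F_m^* c_1(\cB) = \sum_{1 \le i < j \le r} 2 m_i m_j \, p_{ij}^* c_1(\cO(\Delta)) + \sum_{i=1}^r (m_i^2 + 2 m_i n)\, p_i^* c_1(\omega^{\otimes -1}) + n^2 \, c_1(\pair{\omega,\omega}) \]
of $2$-forms on $\cC_g^r$.

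It then remains to match each term with the tautological form appearing in the statement. On the left, $c_1(\cB) = 2\omega_0$ by the observation recorded just before Theorem~\ref{prop:fmisometry}, so the left-hand side is $-2F_m^*\omega_0$. On the right, the Hermitian metrics occurring in Theorem~\ref{prop:fmisometry} are precisely those fixed in Section~\ref{sec:canonical_h}: the metric on $\cO(\Delta)$ derived from the Arakelov--Green's function gives $c_1(\cO(\Delta)) = h$; the dual of the fiberwise Arakelov metric on $\omega^{\otimes -1}$ gives $c_1(\omega^{\otimes -1}) = e^A$; and the Deligne pairing metric on $\pair{\omega,\omega}$ gives $c_1(\pair{\omega,\omega}) = e_1^A$, by Proposition~\ref{c1deligne} (this is exactly the remark made in the definition of $e_1^A$). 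Substituting these identities gives the claimed equality. The degenerate case $r = 0$ is immediate: since $g \ge 2$, the condition $\sum m_i = (2g-2)n$ forces $n = 0$, the map $F_m$ is then the zero section of $\cJ_g \to \cM_g$, and both sides of the asserted identity vanish because $\omega_0$ vanishes along the given rigidification.

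Finally, Theorem~\ref{thm:jacobian} drops out at once: the coefficients $2m_im_j$, $m_i^2 + 2m_in$ and $n^2$ are all integers, so $2F_m^*\omega_0$ is an integral linear combination of $p_{ij}^*h$, $p_i^*e^A$ and $p^*e_1^A$. There is no genuine obstacle once Theorem~\ref{prop:fmisometry} is in hand; the only points deserving a word of care are verifying that the metrics entering the isometry coincide with those used to define $h$, $e^A$ and $e_1^A$, and dealing with the $r=0$ corner case, both of which are handled above.
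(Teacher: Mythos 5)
Your proposal is correct and is essentially identical to the paper's proof, which likewise just takes Chern forms of both sides of the isometry in Theorem~\ref{prop:fmisometry} and identifies $c_1(\cO(\Delta))=h$, $c_1(\omega^{\otimes-1})=e^A$, $c_1(\pair{\omega,\omega})=e_1^A$ and $c_1(\cB)=2\omega_0$. The extra care you take with the $r=0$ corner case is harmless but not needed, since the setup in Section~\ref{sec:proof_AB} assumes $r\ge 1$.
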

\begin{proof} Simply take Chern forms on left and right hand side in Theorem~\ref{prop:fmisometry}.
\end{proof}
A key example is obtained by setting $m=(1,-1)$. Writing $\delta=F_{(1,-1)}$ we obtain from Corollary~\ref{cor:pullbackomega}  the  identity
\begin{equation} \label{eqn:pullback_diff}
-2\delta^*\omega_0 = -2h + p_1^*e^A + p_2^*e^A 
\end{equation}
in $A^2(\cC_g^2)$. See also \cite[Theorem~1.4]{dejongtorus}.
\begin{proof}[Proof of Theorem~\ref{thm:char_alt}]
We need to show that we can obtain the $2$-form \(h\) from the $2$-form \(2\delta^*\omega_0\) by using pullbacks and fiber integrals. This is not difficult using the results from Section~\ref{sec:canonical_h}; we refer to \cite[Section~4.3]{thesis} for details of the following computation. 
Squaring left and right hand side of the identity in (\ref{eqn:pullback_diff}) and integrating the result along the fibers of \(p_1:\cC_g^2\to\cC_g\)  first of all yields
\[ \int_{p_1}(-2\delta^*\omega_0)^2 = -4ge^A + p^*e_1^A. \]
Next, squaring the latter form and integrating it along the fibers of \(p:\cC_g\to\cM_g\) gives
\[ \int_p (-4ge^A + p^*e_1^A)^2 =16g(2g-1) e_1^A.\]
We conclude that we can obtain \(e_1^A\), then \(e^A\), and finally \(h\) from \(2\delta^*\omega_0\) by taking fiber integrals and pullbacks.
\end{proof}
\begin{remark}
It is not hard to see using the explicit description of the Betti form $\omega_0$ in e.g.\  \cite[Section~2]{GeomBogom}  that the $(2g+2)$-form $ \omega_0^{g+1}  $ \emph{vanishes identically} on $\cJ_g$. We see that raising the right hand side in Corollary~\ref{cor:pullbackomega} to the $(g+1)$-st power gives rise to a polynomial relation among the tautological forms $p_{ij}^*h$, $p_i^*e^A$ and $e_1^A$; one gets further relations by fiber integrating these down to $\cC_g^r$'s with smaller $r$, and/or by using the fact that we get a polynomial in the \emph{variables} $m_i$ that vanishes identically. This method to generate relations among tautological forms refines a powerful idea due to Randal-Williams \cite{RandalWilliams} to obtain relations among tautological classes in cohomology.  

We present here only one example: setting $g=2$, and working with $2\delta^*\omega_0$ we get from Equation \ref{eqn:pullback_diff} the identity
\[ \left( -2h + p_1^*e^A + p_2^*e^A \right)^3 = 0 \in \tR^6(\cC_2^2) \, . \]
Expanding the parentheses, and fiber integrating the resulting ten tautological $6$-forms down to $\cM_2$ turns out to yield the interesting relation
\begin{equation} \label{eq:special_rel_g=2}
 8\nu + 12 e_1^A  = 0 \in \tR^2(\cM_2) ,
\end{equation}
where $\nu$ is the form defined in Equation~\ref{eq:def_nu}.
We refer to \cite[Section~4.10]{thesis} for details of this computation, and for a further discussion of Randal-Williams' method for forms.
\end{remark}

\begin{remark} \label{rem:normal_functions} Each of the morphisms $F_m$ from Equation \ref{eqn:F_m} is an example of a ``normal function'' on $\cC_g^r$  in the sense of Hodge theory. In \cite{HainNormal} Hain considers, apart from the normal functions $F_m$, also a certain normal function on $\cM_g$ related to the \emph{Ceresa cycle}. The analogue of the line bundle $F_m^*\cB$ above is the so-called \emph{Hain-Reed line bundle}, studied extensively in \cite{HainReed}. By \cite[Proposition~10.2]{dejongtorus} the Chern form of the Hain-Reed line bundle is proportional to the $2$-form called $e_1^J$ in \cites{kawazumi2008johnson,  kawazumi2009canonical}. As follows from \cite[Theorem~1.4]{dejongtorus}, the $2$-form $e_1^J$ on $\cM_g$ is a linear combination of the $2$-forms $e_1^A$ and  \(\frac{1}{\pi\i} \del\delbar\varphi\), and hence is a tautological $2$-form. 

Now \cite[Theorem~A.1]{HainNormal} states, in a very rough form, that the normal functions $F_m$ and $\nu$ are essentially the \emph{only} normal functions on the moduli spaces $\cC_g^r$ that satisfy the property that their monodromy representation factors through a rational representation of $\Sp_{2g}$. We find that Theorem~\ref{thm:char_alt} can be rephrased as saying that the system of rings of tautological forms can be characterized  as the smallest system of $\R$-algebras of differential forms that is closed under tautological pullbacks and submersions, and contains all natural $2$-forms obtained from the normal functions whose monodromy representations factor through a rational representation of $\Sp_{2g}$. We mention that the corresponding result in cohomology is implicit in the work of Kawazumi and Morita \cite{KawazumiMorita} and Petersen, Tavakol and Yin \cite{PTY}.
\end{remark}

\section{Marked graphs} \label{sec:marked_graphs}

The purpose of Sections~\ref{sec:marked_graphs}--\ref{sec:taut_forms_contr} is to prepare for the proofs of Theorems~\ref{thm:finite_dim}--\ref{thm:ZK_inv}.

\subsection{The category of \texorpdfstring{\(r\)-marked}{r-marked} graphs}
In this paper, a \emph{graph} is a pair \((V,E)\), consisting of a finite set \(V\) of \emph{vertices}, and a finite multiset \(E\) of \emph{edges} consisting of unordered pairs (multisets of cardinality 2) of elements of \(V\). If \(e\in E\) is an edge, its two elements are called the \emph{endpoints} of \(e\). If these endpoints are the same, we call \(e\) a \emph{loop}. The \emph{degree} of a vertex \(v\in V\), denoted \(\deg v\), is the number of times \(v\) occurs as an endpoint of an edge of \(E\); that is: the multiplicity of \(v\) in the multiset sum of all edges \(e\in E\). In particular, we see that each loop contributes 2 to the degree of the vertex it is based on.

If \(\Gamma = (V,E)\) is a graph, then the \emph{(Euler) characteristic} of \(\Gamma\) is defined as
\[\chi(\Gamma) = \abs{V} - \abs{E}.\]
The Euler characteristic is additive on disjoint unions of graphs. 

Let \(r\geq 0\) be an integer. An \emph{\(r\)-marked graph} \((V,E,m)\) is a graph \(\Gamma = (V,E)\) equipped with a \emph{marking} \(m\); that is: an injective map \(m: \{1,\dots,r\} \to V\). So a marked graph can be seen as a graph of which \(r\) vertices are labeled \(1,\dots,r\). An \emph{unmarked graph} is a \(0\)-marked graph, which is the same as an `ordinary' graph. 

Let \(\Gamma = (V,E,m)\) be an \(r\)-marked graph.
A vertex \(v\in V\) is \emph{marked} if it is in the image of \(m\), and \emph{unmarked} otherwise. We have a partition of \(V\) in a subset \(V_+\) of marked vertices and a subset \(V_-\) of unmarked vertices. 

Let \(\Gamma=(V,E,m)\) and \(\Gamma'=(V',E',m')\) be two \(r\)-marked graphs. A \emph{morphism of \(r\)-marked graphs} \(f:\Gamma \to \Gamma'\) is a pair of maps \((f_\vv: V\to V', f_\ee: E\to E')\), such that \(f_\vv\) respects the \(r\)-marking (that is: \(f_\vv\circ m = m'\)), and such that for each edge \(e\in E\) with endpoints \(v,w\), the edge \(f_\ee(e)\in E'\) has endpoints \(f_\vv(v)\) and \(f_\vv(w)\). 

We obtain a category \(\cG_r\) of \(r\)-marked graphs. Two \(r\)-marked graphs \(\Gamma\) and \(\Gamma'\) are isomorphic if and only if there exists a bijection on vertices that respects the markings of \(\Gamma\) and \(\Gamma'\), such that for each pair of vertices \(v,w\) of \(\Gamma\) the number of edges between \(v\) and \(w\) equals the number of edges between the corresponding vertices of \(\Gamma'\).

Assume that \(\Gamma=(V,E)\) is a graph, and let \(f: V \to V'\) be a map of finite sets. The \emph{graph induced from \(\Gamma\) by \(f\)}, notation \(\Gamma_f\), is the graph \((V',E')\) with set of vertices equal to \(V'\), and with edges
\[E' = \{\{f(v_1),f(v_2)\}: \{v_1,v_2\}\in E\}.\]
Notice that in particular we have \(\abs{E} = \abs{E'}\). 

The characteristic of \(\Gamma_f\) equals
\[\chi(\Gamma_f) = \chi(\Gamma) + \abs{V'} - \abs{V}.\]
If \(v'\in V'\) is a vertex in \(V'\), its degree is given by:
\[\deg(v') = \sum_{v\in f^{-1}(v')} \deg(v).\]

\subsection{Gluing marked graphs}
In this section we define a binary operation \(\sqcup_{r}\) on the category of \(r\)-marked graphs \(\cG_r\). It turns out that \(\sqcup_{r}\) is the coproduct in the category \(\cG_r\). We define the binary operation \(\sqcup_{r}\) on two \(r\)-marked graphs \(\Gamma,\Gamma'\) by gluing their marked vertices pairwise. More precisely, we proceed as follows.

Let \(\Gamma = (V,E,m)\) and \(\Gamma' = (V',E',m')\) be two \(r\)-marked graphs, and let \(\Gamma \sqcup \Gamma' = (V\sqcup V', E+E')\) denote the disjoint union of the underlying (unmarked) graphs. Consider the set \(V''\) defined by the pushout diagram
\begin{equation}\label{eqn:pushoutrglue}
	\begin{tikzcd}
		\{1,\dots,r\} \rar{m} \dar{m'} \arrow[dr,phantom,"\lrcorner", near end] & V \dar \\
		V' \rar & V''.
	\end{tikzcd}
\end{equation}
In other words, \(V''\) is the set \((V\sqcup V')/\sim\), where \(\sim\) is the smallest equivalence relation on \(V\sqcup V'\) such that \(m(i) \sim m'(i)\) for all \(i\in\{1,\dots,r\}\). Note, moreover, that the map \(m'':\{1,\dots,r\} \to V''\) induced by the above diagram is injective, since \(m\) and \(m'\) are injective.
\begin{definition}
	The \(r\)-marked graph \(\Gamma\sqcup_{r}\Gamma'\) is the graph induced from the disjoint union \(\Gamma\sqcup \Gamma'\) by the natural map \(V\sqcup V'\to V''\), endowed with the \(r\)-marking \(m'':\{1,\dots,r\}\to V''\) obtained from pushout diagram \ref{eqn:pushoutrglue}. 
\end{definition}

Suppose that \(\Gamma\) has \(u\) unmarked vertices and \(e\) edges, and that \(\Gamma'\) has \(u'\) unmarked vertices and \(e'\) edges. It follows that \(\Gamma\sqcup_{r}\Gamma'\) has \(u+u'\) unmarked vertices and \(e+e'\) edges. Therefore, the characteristic of \(\Gamma \sqcup_{r}\Gamma'\) is given by
\begin{equation}\label{eqn:charvsglue}
	\chi(\Gamma\sqcup_{r}\Gamma') = \chi(\Gamma) + \chi(\Gamma') - r.
\end{equation}

The set of vertices of \(\Gamma\sqcup_{r} \Gamma'\) is the pushout of the maps \(m\) and \(m'\). 
The operator \(\sqcup_{0}\) on \(\cG_0\) is simply the disjoint union. On \(\cG_1\) the operator \(\sqcup_{1}\) is the wedge sum. 

\subsection{Pushforward maps on marked graphs}\label{sec:pushforward}
Let \(\phi: \{1,\dots,s\} \to \{1,\dots,r\}\) be a map of sets. We will define a pushforward functor \(\phi_*: \cG_s \to \cG_r\). Given a graph \(\Gamma \in \cG_s\), the pushforward \(\phi_*\Gamma\) is obtained from \(\Gamma\) by replacing the \(s\) marked vertices by \(r\) marked vertices, as follows. 

Let \(\Gamma = (V,E,m)\) be an \(s\)-marked graph. Consider the pushout diagram (of sets)

\begin{equation}\label{diag:pushout}
\begin{tikzcd}
	\{1,\dots,s\} \rar{m}\ar[phantom,dr,"\lrcorner", near end] \dar{\phi} & V \dar{\phi_V} \\
	\{1,\dots,r\} \rar["m'"']{} & V'.
\end{tikzcd}
\end{equation}

As \(m\) is injective, it follows that \(m'\) must be injective. 

We define \(\phi_*\Gamma\) to be the graph \((V',E',m')\), where \((V',E')\) is the graph induced from \((V,E)\) by \(\phi_V\), and \(m'\) is the map defined in diagram \ref{diag:pushout}. 
Notice that \(\phi_V\) then induces a bijection between the unmarked vertices of \(\Gamma\) and \(\phi_*\Gamma\).

The characteristic of \(\phi_*\Gamma\) is given by
\[\chi(\phi_*\Gamma) = \chi(\Gamma) - s + r.\]

Moreover, if \(f:\Gamma_1 \to \Gamma_2\) is a morphism of \(s\)-marked graphs, we obtain an induced morphism of \(r\)-marked graphs \(\phi_*f: \phi_*\Gamma_1 \to \phi_*\Gamma_2\), via the universal property of the pushout diagram \ref{diag:pushout}. We obtain a covariant functor
\[\phi_*: \cG_s \to \cG_r.\]

The following properties of  the pushforward functor are not hard to show.
\begin{prop} \label{prop:pushforwardcomposition}
	Let \(\phi: \{1,\dots,s\} \to \{1,\dots,r\}\) and \(\psi: \{1,\dots, t\} \to \{1,\dots,s\}\) be maps. Then the functors \(\phi_*\psi_*\) and \((\phi\psi)_*\) from \(\cG_t\) to \(\cG_r\) are naturally isomorphic. \qed
\end{prop}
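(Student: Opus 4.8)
The statement asserts that the functors $\phi_*\psi_*$ and $(\phi\psi)_*$ from $\cG_t$ to $\cG_r$ are naturally isomorphic. The plan is to unwind the construction of the pushforward functor and reduce the claim to a formal property of pushout squares, namely that pasting two pushout squares horizontally yields a pushout square. First I would fix an $s$-marked graph---no wait, let me start from a $t$-marked graph $\Gamma = (V, E, m)$ in $\cG_t$ and trace what happens to its vertex set under each composite.

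The first step is to spell out the two vertex sets being compared. Applying $\psi_*$ to $\Gamma$ produces a graph whose vertex set $V'$ fits in a pushout square with top map $m \colon \{1,\dots,t\} \to V$, left map $\psi \colon \{1,\dots,t\}\to\{1,\dots,s\}$, bottom map $m' \colon \{1,\dots,s\}\to V'$, and right map $\psi_V \colon V \to V'$. Then applying $\phi_*$ to this produces a graph with vertex set $V''$ sitting in a pushout square with top map $m' \colon \{1,\dots,s\}\to V'$, left map $\phi \colon \{1,\dots,s\}\to\{1,\dots,r\}$, bottom map $m'' \colon \{1,\dots,r\}\to V''$, and right map $\phi_{V'} \colon V' \to V''$. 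On the other hand, $(\phi\psi)_*\Gamma$ has vertex set $\widetilde V$ in a pushout square with top map $m$, left map $\phi\psi$, and bottom map $\widetilde m$. The key step is to observe that stacking the first two pushout squares vertically (the $\psi_*$-square on top, the $\phi_*$-square below) gives an outer rectangle whose top map is $m$, whose left map is the composite $\phi \circ \psi$, and whose bottom map is $m''$; by the pasting lemma for pushouts this outer rectangle is itself a pushout square. Since pushouts are unique up to unique isomorphism, there is a canonical bijection $\widetilde V \isoto V''$ compatible with the maps from $V$ and with the $r$-markings $\widetilde m$ and $m''$.

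The next step is to check that this canonical bijection on vertices extends to an isomorphism of $r$-marked graphs, i.e., that it is compatible with the edge data, and that it is natural in $\Gamma$. For the edge compatibility: in all three constructions the edge set is the image multiset of $E$ under the relevant map on vertices (the graph \emph{induced} by a map of vertex sets keeps the same indexing set of edges, sending $\{v_1,v_2\}$ to the pair of images). Since the composite $\phi_{V'}\circ\psi_V \colon V \to V''$ equals the map $V \to \widetilde V$ appearing in the $(\phi\psi)_*$ construction---this is exactly the commutativity encoded in the pasted square---the induced edge multisets agree, and the canonical bijection on vertices carries one to the other. For naturality, given a morphism $f \colon \Gamma_1 \to \Gamma_2$ in $\cG_t$, both $\phi_*\psi_*f$ and $(\phi\psi)_*f$ are the maps induced on pushouts by the universal property applied to $f_\vv$; the required square commutes because both sides are determined by the same universal property of the outer pushout rectangle. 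Assembling these observations gives the natural isomorphism $\phi_*\psi_* \cong (\phi\psi)_*$.

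I do not expect a serious obstacle here; this is a bookkeeping lemma. The only mildly delicate point is being careful that "the graph induced by a map of vertex sets" is functorial in the map and behaves well under composition of maps of vertex sets---so that inducing by $\psi_V$ and then by $\phi_{V'}$ is the same as inducing by $\phi_{V'}\circ\psi_V$---but this is immediate from the definition of $\Gamma_f$ given earlier, since taking images of pairs is compatible with composition. The proof is essentially the pasting lemma for pushouts dressed up with edge and marking data.
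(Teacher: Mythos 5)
Your argument is correct and is exactly the standard one the paper has in mind: the paper omits the proof entirely (the proposition is stated with a \qed after the remark that these properties "are not hard to show"), and the pasting lemma for pushouts plus the compatibility of the induced-graph construction with composition of vertex maps is precisely the intended bookkeeping. Nothing to add.
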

\begin{prop} \label{prop:pushfwdglue}
	Let \(\phi: \{1,\dots,s\}\to \{1,\dots,r\}\) be a map.
	Let \(\Gamma\) and \(\Gamma'\) be two \(s\)-marked graphs.
	Then there is a canonical isomorphism of graphs
	\[\phi_*(\Gamma\sqcup_s\Gamma') \simeq \phi_*(\Gamma) \sqcup_r \phi_*(\Gamma').\]
\end{prop}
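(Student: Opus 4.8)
The plan is to unwind both sides of the claimed isomorphism to descriptions purely in terms of pushout diagrams of finite sets, and then invoke the compatibility of pushouts with one another. Recall that $\phi_*\Gamma$ has vertex set $V'$ defined by the pushout of $m\colon\{1,\dots,s\}\to V$ along $\phi\colon\{1,\dots,s\}\to\{1,\dots,r\}$, with edge multiset the image of $E$ under $\phi_V$. Likewise the vertex set of $\Gamma\sqcup_s\Gamma'$ is the pushout $W$ of $m\colon\{1,\dots,s\}\to V$ and $m'\colon\{1,\dots,s\}\to V'$, with edge multiset $E+E'$ transported along $V\sqcup V'\to W$. So the first step is to write down the vertex set of each side of the asserted isomorphism as an iterated colimit.

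On the left, the vertex set of $\phi_*(\Gamma\sqcup_s\Gamma')$ is obtained by first forming the pushout $W$ of $(m,m')$ over $\{1,\dots,s\}$, and then forming the pushout of the induced map $\{1,\dots,s\}\to W$ along $\phi$. On the right, the vertex set of $\phi_*(\Gamma)\sqcup_r\phi_*(\Gamma')$ is obtained by first forming the two pushouts $V'_1,V'_2$ of $m,m'$ along $\phi$ (so we have $\{1,\dots,r\}\to V'_1$ and $\{1,\dots,r\}\to V'_2$), and then forming the pushout of these two maps over $\{1,\dots,r\}$. Both constructions compute the colimit of the same diagram of finite sets, namely the diagram with objects $V$, $V'$, $\{1,\dots,s\}$, $\{1,\dots,r\}$ and arrows $V\xleftarrow{m}\{1,\dots,s\}\xrightarrow{m'}V'$ together with $\{1,\dots,s\}\xrightarrow{\phi}\{1,\dots,r\}$. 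The key step is therefore to check that this shared diagram is the one being computed in either order, which is a routine diagram chase (or a direct appeal to the fact that colimits commute with colimits). This gives a canonical bijection between the two vertex sets, and one checks that it carries the $r$-marking $m''$ of the left side to that of the right side, since both markings are the structure map from $\{1,\dots,r\}$ into the colimit.

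It remains to match up the edges. On both sides the edge multiset has the same cardinality $|E|+|E'|$ — indeed passing to induced graphs and to $\sqcup$ never changes the number of edges — and each edge is tracked through the vertex maps: the image of $e\in E$ on the left is obtained by sending its endpoints through $V\to W\to$ (colimit), on the right by sending them through $V\to V'_1\to$ (colimit), and these two composite maps on vertices agree under the bijection from the previous step. The same holds for $e\in E'$. Hence the canonical vertex bijection upgrades to an isomorphism of $r$-marked graphs, which is what we wanted. The only point requiring a little care — and the main obstacle if one is not careful about bookkeeping — is the handling of multiplicities in the edge multisets and of loops: one must ensure that, for instance, an edge of $\Gamma$ whose two endpoints get identified with endpoints of $\Gamma'$-edges is counted correctly, but since we are working at the level of multisets and all maps in sight are maps of multisets, no multiplicity is created or destroyed, and the argument goes through verbatim.
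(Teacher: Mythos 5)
Your argument is correct. The paper actually omits the proof of this proposition (it is one of the facts declared ``not hard to show''), and your approach -- identifying both sides' vertex sets as the colimit of the single diagram $V \xleftarrow{m} \{1,\dots,s\} \xrightarrow{m'} V'$, $\{1,\dots,s\} \xrightarrow{\phi} \{1,\dots,r\}$, computed as iterated pushouts in two different orders, and then observing that edges and markings are transported by the canonical structure maps of that colimit -- is precisely the intended routine verification.
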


\subsection{Pullback maps on marked graphs}
The next operation we will consider is a pullback operation. Let \[\phi:\{1,\dots,s\} \to \{1,\dots,r\}\] be an \emph{injective} map. Then we define a \emph{pullback} functor
\[\phi^*: \cG_r \to \cG_s\]
as follows. For any \(r\)-marked graph \(\Gamma = (V,E,m)\) we simply define \(\phi^*\Gamma\) by precomposing the marking \(m\) with the injection \(\phi\):
\[\phi^*(\Gamma) = (V,E,m\circ \phi).\]

It follows that
\[\chi(\phi^*\Gamma) = \chi(\Gamma).\]

If \(f:\Gamma_1 \to \Gamma_2\) is a morphism of \(r\)-marked graphs, then \(f\) induces a morphism \(\phi^*f: \phi^*\Gamma_1 \to \phi^*\Gamma_2\) in a natural way. It is straightforward to verify that \(\phi^*\) is a functor from \(\cG_r \to \cG_s\).

Similarly to the pushforward, it is easy to see that the pullback is well-behaved with respect to compositions.
\begin{prop} \label{prop:pullbackcomposition}
	Let \(\phi: \{1,\dots,s\} \to \{1,\dots,r\}\) and \(\psi: \{1,\dots,t\} \to \{1,\dots,s\}\) be injective maps. Then the functors \(\psi^*\phi^*\) and \((\phi\psi)^*\) from \(\cG_r\) to \(\cG_t\) are equal. \qed
\end{prop}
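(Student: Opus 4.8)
The plan is to unwind both functors on objects and on morphisms and to verify that they coincide on the nose, using only associativity of composition of maps of finite sets. The key point is that the pullback functor $\phi^*$ is the most transparent of the operations on marked graphs: it leaves the underlying graph $(V,E)$ completely untouched and only reinterprets the marking by precomposition with $\phi$. This is exactly what makes the stronger conclusion \emph{equality} (rather than mere natural isomorphism) available.

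First I would record the action on objects. Let $\Gamma=(V,E,m)$ be an $r$-marked graph, so $m\colon\{1,\dots,r\}\to V$. By definition $\phi^*\Gamma=(V,E,m\circ\phi)$, and applying $\psi^*$ yields $\psi^*\phi^*\Gamma=(V,E,(m\circ\phi)\circ\psi)$. On the other hand $(\phi\psi)^*\Gamma=(V,E,m\circ(\phi\circ\psi))$. Since composition of maps of finite sets is associative, $(m\circ\phi)\circ\psi=m\circ(\phi\circ\psi)$, so the two $t$-marked graphs are literally identical, not merely isomorphic.

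Next I would treat morphisms. Let $f=(f_\vv,f_\ee)\colon\Gamma_1\to\Gamma_2$ be a morphism of $r$-marked graphs, with $\Gamma_i=(V_i,E_i,m_i)$. The crucial observation is that $\phi^*f$ is given by the \emph{same} pair of maps $(f_\vv,f_\ee)$, now viewed as a morphism $\phi^*\Gamma_1\to\phi^*\Gamma_2$: the edge-endpoint condition does not refer to the marking at all, and marking-compatibility for the new markings follows from $f_\vv\circ(m_1\circ\phi)=(f_\vv\circ m_1)\circ\phi=m_2\circ\phi$. Applying $\psi^*$ again does not alter this underlying pair either, so $\psi^*\phi^*f=(f_\vv,f_\ee)=(\phi\psi)^*f$.

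There is essentially no obstacle here. In contrast with the pushforward case of Proposition~\ref{prop:pushforwardcomposition}, whose definition passes through a pushout and therefore produces only a natural isomorphism, the pullback performs no quotient construction and leaves both the underlying graph and the underlying vertex and edge maps of any morphism unchanged. The only content is associativity of set-map composition, which delivers the agreement of the two functors on objects and on morphisms simultaneously, hence the equality $\psi^*\phi^*=(\phi\psi)^*$ of functors $\cG_r\to\cG_t$.
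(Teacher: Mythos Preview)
Your proof is correct and matches the paper's approach: the paper states the proposition with an immediate \qed, treating it as a direct consequence of the definition of $\phi^*$ via precomposition of the marking. Your write-up simply makes explicit the two-line verification on objects and morphisms that the paper leaves implicit.
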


One can check that the pushforward and pullback functor are adjoints. Contrary to what the terms `pushforward' and `pullback' might suggest to a geometer, the pushforward functor is \emph{left} adjoint to the pullback. To ease our minds, we recall that the (left adjoint) pushforward functor does pushouts on sets of vertices, and the (right adjoint) pullback functor is a functor that \emph{forgets} some of the markings.

\section{Graphical formalism} \label{sec:tautformsassoctomarkedgraphs}

Fix an integer \(g\geq 2\). In this section, we will describe an operation that takes an \(r\)-marked graph and outputs a tautological form on \(\cC_g^r\). Let \(r, s \geq 0\) be a pair of integers and consider a map of sets \(\phi: \{1,\dots,s\}\to \{1,\dots,r\}\). We recall that to these data we have associated the tautological morphism
$ f^\phi: \cC_g^r \to \cC_g^s $
given by sending a marked family  \( (\cC \to S,\sigma_1,\dots,\sigma_r) \) of compact connected Riemann surfaces of genus $g$ to the marked family \( (\cC \to S,\sigma_{\phi(1)},\dots,\sigma_{\phi(s)}) \). A tautological morphism $f^\phi$ is a submersion if and only if the map $\phi$ is injective.

The following examples list some tautological morphisms that we often use. 

\begin{example}
	If \(1\leq i\leq r\) is an integer, the map \(\{1\} \to \{1,\dots,r\}\) given by \(1\mapsto i\) induces the map \(\cC_g^r \to \cC_g\) that projects onto the \(i\)th coordinate. We denote this map by \(p_i\). More generally, if \(1\leq i_1,\dots,i_s\leq r\) are integers, we denote by \[p_{i_1,\dots,i_s}: \cC_g^r\to\cC_g^s\] the tautological morphism associated to \(\phi: \{1,\dots,s\}\to \{1,\dots,r\}: k \mapsto i_k\). 
\end{example}
\begin{example}
	Let \(1\leq i_1<\dots<i_s\leq r\) be integers. Consider the unique increasing map \(\phi:\{1,\dots,r-s\} \to \{1,\dots,r\}\) whose image is \(\{1,\dots,r\} \setminus\{i_1,\dots,i_s\}\). Denote by
	\[ p_{(i_1,\dots,i_s)}: \cC_g^r \to \cC_g^{r-s} \]
	the tautological morphism associated to \(\phi\) (notice the parentheses!). Then \(p_{(i_1,\dots,i_s)}\) is the tautological submersion that `forgets the coordinates \(i_1,\dots,i_s\)'. For instance, the map \(p_{(2)}: \cC_g^2 \to \cC_g\) equals the map \(p_1:\cC_g^2\to\cC_g\). 
\end{example}

Consider a commutative diagram of sets, together with the associated diagram of moduli stacks:
\[
\begin{tikzcd}
	\{1,\dots,u\} & \{1,\dots,s\} \arrow[l,"\eta"'] \\
	\{1,\dots,t\} \arrow[u,"\chi"'] & \{1,\dots,r\} \arrow[l,"\psi"']\arrow[u,"\phi"']
\end{tikzcd}
\hspace{0.15\textwidth}
\begin{tikzcd}
	\cC_g^u \arrow[r,"f^\eta"]\arrow[d,"f^\chi"] & \cC_g^s \arrow[d,"f^\phi"] \\
	\cC_g^t \arrow[r,"f^\psi"] & \cC_g^r 
\end{tikzcd}
\]
It is not difficult to see that the diagram of moduli stacks is cartesian if and only if the diagram of sets is a pushout diagram. We will be using such cartesian diagrams often.

Let \(\Gamma = (V,E,m)\) be an \(r\)-marked graph, and let \(u\) be the number of unmarked vertices of \(\Gamma\). Choose a bijective extension \[\bar m: \{1,\dots,r+u\} \isoto V\] of the marking \(m: \{1,\dots,r\} \to V\). We will define a differential form \(\mu_\Gamma\) on \(\cC_g^{r+u}\) that will depend on the choice of this extension \(\bar m\). 

First, we associate to every edge \(e\in E\) a \(2\)-form \(h_e\) on \(\cC_g^{r+u}\). This form is defined as follows. Suppose that the endpoints of \(e\) are \(\bar m(i)\) and \(\bar m(j)\). We define
\[h_e = p_{i,j}^* h \in \tR^2(\cC_g^{r+u}),\]
where \(p_{i,j}: \cC_g^{r+u} \to \cC_g^2\) is the projection on the \(i\)th and \(j\)th coordinate. If $e$ is a loop based at vertex $\bar m(i)$, then
\[h_e = p_{i,i}^* h = p_i^* \Delta^* h = p_i^*e^A,\]
where $p_i: \cC_g^{r+u}\to \cC_g$ is the projection on the \(i\)th coordinate, and $\Delta: \cC_g \to \cC_g^2$ is the diagonal morphism. 
Notice that \(h_e\) does not depend on the order of \(i\) and \(j\) as the form \(h\) is symmetric in the two coordinates of \(\cC_g^2\). 

Now, we let \(\mu_\Gamma\) denote the product of all these 2-forms:
\[\mu_\Gamma = \bigwedge_{e\in E} h_e \in \tR^{2\abs{E}}(\cC_g^{r+u}).\]
This form depends on the choice of \(\bar m\). However, the form obtained from a different choice of \(\bar m\) only differs from \(\mu_\Gamma\) by permutation of the last \(u\) coordinates of \(\cC_g^{r+u}\). Therefore, by Fubini's theorem, the fiber integral
\begin{equation}\label{eqn:alpha_Gamma}
	\alpha_\Gamma := \int_{p_{1,\dots,r}: \cC_g^{r+u} \to \cC_g^r} \mu_\Gamma \in \tR^{2(\abs{E}-u)}(\cC_g^r)
\end{equation}
does not depend on the choice of \(\bar m\). It is clear that the forms $\alpha_\Gamma$ are all tautological forms. Note that the degree $2(|E|-u)$ of $\alpha_\Gamma$ can alternatively be written as $2(r-\chi(\Gamma))$, with $\chi(\Gamma)=r+u-|E|$ the Euler characteristic of $\Gamma$.

\begin{definition}
	Let \(\Gamma\) be an \(r\)-marked graph. The $2(r-\chi(\Gamma))$-form \(\alpha_\Gamma\)   on \(\cC_g^r\) defined in Equation~\ref{eqn:alpha_Gamma} is \emph{the (tautological) form associated to \(\Gamma\)}.  Here $\chi(\Gamma)$ denotes the Euler characteristic of $\Gamma$.
\end{definition}

As the following examples show, all tautological differential forms we found so far can be expressed as tautological forms associated to marked graphs. 
\begin{example} \label{ex:h-as-graph-form}
	Consider the unique \(2\)-marked graph \(\Gamma\) with no unmarked vertices and a single edge between the two marked vertices. The associated \(2\)-form \(\alpha_\Gamma\) on \(\cC_g^2\) is \(h\). 
	\begin{center}
		\begin{tikzpicture}
			\draw (0,0) node[left]{$\Gamma=$\;} node[mvertex,label=1]{} -- (1,0) node[mvertex,label=2]{};
		\end{tikzpicture}
	\end{center}
\end{example}
\begin{example} \label{ex:eA-as-graph-form}
	Consider the unique \(1\)-marked graph \(\Gamma\) with no unmarked vertices and a single loop based at the unique vertex of \(\Gamma\). The associated \(2\)-form \(\alpha_\Gamma\) on \(\cC_g\) is \(\Delta^* h = e^A\).
	\begin{center}
		\begin{tikzpicture}
			\draw (0,0) node[left]{$\Gamma=$\;} .. controls ++(45:1) and ++(-45:1) .. (0,0) node[mvertex,label=1]{};
		\end{tikzpicture}
	\end{center}
\end{example}
\begin{example} \label{ex:ddphi-as-graph-form}
	Consider the two 0-marked graphs in the following picture.
	\begin{center}
		\begin{tikzpicture}
			\begin{scope}
				\draw (0,0) node[left]{\(\Gamma_1=\)\;};
				\draw (0,0) node[vertex]{} -- (1,0) node[vertex]{} ;
				\draw (0,0) .. controls ++(45:0.5) and ++(135:0.5) .. (1,0);
				\draw (0,0) .. controls ++(-45:0.5) and ++(-135:0.5) .. (1,0);
			\end{scope}
			\begin{scope}[shift={(5,0)}]
				\draw (-0.5,0) node[left]{\(\Gamma_2=\)\;};
				\draw (0,0) node[vertex]{} -- (1,0) node[vertex]{} ;
				\draw (0,0) .. controls ++(135:1) and ++(-135:1) .. (0,0);
				\draw (1,0) .. controls ++(45:1) and ++(-45:1) .. (1,0);
			\end{scope}
		\end{tikzpicture}
	\end{center}
	The associated forms on \(\cM_g\) are
	\[\alpha_{\Gamma_1} = \int_{\cC_g^2/\cM_g} h^3 = \nu \]
	and
	\begin{align*}
		\alpha_{\Gamma_2} &= \int_{\cC_g^2/\cM_g} h\wedge p_1^*e^A \wedge p_2^*e^A \\
				  &= \int_{\cC_g/\cM_g}\int_{p_1:\cC_g^2\to\cC_g} h\wedge p_1^*e^A \wedge p_2^*e^A\\
				  &= \int_{\cC_g/\cM_g} \left( e^A \wedge \int_{p_1} h\wedge p_2^*e^A \right)\\
				  &= \int_{\cC_g/\cM_g} (e^A)^2\\
				  &= e_1^A,
	\end{align*}
	where we have used the projection formula for fiber integrals and Lemma \ref{lem:inteAc1L}.
\end{example}

\section{Tautological forms and graph operations} \label{sec:taut_forms_and_graph_oper}

In this section we will see that the forms $\alpha_\Gamma$ based on marked graphs $\Gamma$ behave rather nicely with respect to pullbacks, pushforwards, and gluing  of marked graphs. By using this fact, we will be able to prove the following theorem. 
\begin{thm} \label{thm:taut-forms-generated-by-graphs}
	For every integer \(r\geq 0\), the ring of tautological differential forms \(\tR^*(\cC_g^r)\) is spanned as an \(\R\)-vector space by forms \(\alpha_\Gamma\) associated to \(r\)-marked graphs \(\Gamma\). 
\end{thm}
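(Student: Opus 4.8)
The plan is to verify that the collection of $\R$-subspaces
\[
S^*(\cC_g^r) := \operatorname{span}_{\R}\{\alpha_\Gamma : \Gamma \text{ an } r\text{-marked graph}\} \subseteq A^*(\cC_g^r)
\]
satisfies the three closure conditions (1)--(3) from the definition of the rings of tautological forms, and that it is contained in $\tR^*(\cC_g^r)$. The latter containment is immediate: every $\alpha_\Gamma$ is built from $h$ by pullbacks along tautological morphisms, wedge products, and fiber integration along tautological submersions, so $\alpha_\Gamma \in \tR^*(\cC_g^r)$; hence $S^*(\cC_g^r) \subseteq \tR^*(\cC_g^r)$. For the reverse inclusion, minimality (condition~(4)) will force $\tR^*(\cC_g^r) \subseteq S^*(\cC_g^r)$ \emph{provided} the $S^*(\cC_g^r)$ themselves satisfy (1)--(3). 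There is one subtlety: to invoke minimality we need the $S^*(\cC_g^r)$ to be sub-\emph{algebras}, not merely subspaces, so a preliminary point is to check that $S^*(\cC_g^r)$ is closed under wedge product. This follows from the gluing operation: I expect $\alpha_\Gamma \wedge \alpha_{\Gamma'} = \alpha_{\Gamma \sqcup_r \Gamma'}$ (up to a harmless reindexing of unmarked coordinates and Fubini), since wedging the defining forms $\mu_\Gamma$, $\mu_{\Gamma'}$ on a common $\cC_g^{r+u+u'}$ and integrating out all $u+u'$ unmarked coordinates is exactly the recipe for $\alpha_{\Gamma\sqcup_r\Gamma'}$; the pushout of vertex sets in the definition of $\sqcup_r$ matches the fact that the marked coordinates are shared. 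Condition~(1) is Example~\ref{ex:h-as-graph-form}, which exhibits $h = \alpha_\Gamma$ for the $2$-marked single-edge graph.

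The heart of the argument is conditions~(2) and~(3): I must show that for a tautological morphism $f^\phi\colon \cC_g^r \to \cC_g^s$ associated to $\phi\colon\{1,\dots,s\}\to\{1,\dots,r\}$ one has $(f^\phi)^*\alpha_\Gamma \in S^*(\cC_g^r)$ for every $s$-marked graph $\Gamma$, and for injective $\phi$ one has $\int_{f^\phi}\alpha_\Gamma \in S^*(\cC_g^s)$ for every $r$-marked graph $\Gamma$. The natural statements to prove — and these are presumably the content of Section~\ref{sec:taut_forms_and_graph_oper} preceding the theorem — are the compatibilities
\[
(f^\phi)^*\alpha_\Gamma = \alpha_{\phi_*\Gamma} \quad (\phi \text{ arbitrary}), \qquad \int_{f^\phi}\alpha_\Gamma = \alpha_{\phi^*\Gamma} \quad (\phi \text{ injective}),
\]
matching the pushforward functor $\phi_*\colon \cG_s \to \cG_r$ and the pullback functor $\phi^*\colon \cG_r \to \cG_s$ on marked graphs defined in Section~\ref{sec:marked_graphs}. (The apparent clash of variance — a geometric pullback $(f^\phi)^*$ corresponding to the graph \emph{pushforward} $\phi_*$ — is exactly the adjointness remark made after Proposition~\ref{prop:pullbackcomposition}, and is a good sanity check that the bookkeeping is right.) Given these two identities, conditions~(2) and~(3) are immediate, and the theorem follows.

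To prove $(f^\phi)^*\alpha_\Gamma = \alpha_{\phi_*\Gamma}$: write $\alpha_\Gamma = \int_{p_{1,\dots,s}}\mu_\Gamma$ on $\cC_g^{s+u}$, form the base-change square expressing $(f^\phi)^*\circ\int_{p_{1,\dots,s}}$ via the cartesian diagram of moduli stacks induced by the pushout diagram~\eqref{diag:pushout} — this is precisely the remark in Section~\ref{sec:tautformsassoctomarkedgraphs} that a square of stacks is cartesian iff the square of index sets is a pushout — and use that fiber integration satisfies base change for $2$-cartesian diagrams (Section~\ref{sec:prelim}). One then checks that the pulled-back integrand is $\mu_{\phi_*\Gamma}$ on $\cC_g^{r+u}$, using that an edge $e$ of $\Gamma$ with endpoints $\bar m(i),\bar m(j)$ gives $h_e = p_{ij}^*h$, and that pulling back along the coordinate relabeling induced by $\phi_V$ sends this to the $2$-form attached to the corresponding edge of $\phi_*\Gamma$ (loops map to loops, and the loop case $h_e = p_i^* e^A = p_i^*\Delta^* h$ is handled the same way). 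The identity $\int_{f^\phi}\alpha_\Gamma = \alpha_{\phi^*\Gamma}$ for injective $\phi$ is easier: the submersion $f^\phi$ just remembers a subset of the marked coordinates, so by the projection formula and transitivity of fiber integration (Proposition~\ref{prop:pushforwardcomposition}'s geometric analogue), integrating $\mu_\Gamma$ first over the coordinates that $f^\phi$ forgets and then over the unmarked ones reproduces $\alpha_{\phi^*\Gamma}$, where $\phi^*\Gamma$ is $\Gamma$ with the extra markings demoted to unmarked vertices — matching the definition of the pullback functor on graphs. The main obstacle, and the place where care is genuinely needed, is the base-change/Fubini bookkeeping in the first identity: one must track the (arbitrary, possibly non-injective) map $\phi_V$ on vertex sets, the choice of bijective extensions $\bar m$ on both sides, and the fact that $\phi_*$ may identify and may create marked vertices, while keeping the degrees consistent via $\chi(\phi_*\Gamma) = \chi(\Gamma) - s + r$. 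Once the naturality of $\alpha_{(-)}$ under $\phi_*$ and $\phi^*$ is nailed down, closure and minimality assemble the proof.
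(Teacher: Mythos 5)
Your proposal is correct and follows essentially the same route as the paper: it reduces the theorem to showing that the span $S^*(\cC_g^r)$ of the forms $\alpha_\Gamma$ is a system of subalgebras containing $h$ and closed under tautological pullbacks and fiber integrals, via the three identities $\alpha_\Gamma\wedge\alpha_{\Gamma'}=\alpha_{\Gamma\sqcup_r\Gamma'}$, $(f^\phi)^*\alpha_\Gamma=\alpha_{\phi_*\Gamma}$, and $\int_{f^\phi}\alpha_\Gamma=\alpha_{\phi^*\Gamma}$, which are exactly Propositions~\ref{prop:graphformwedge}, \ref{prop:graphformpullback} and~\ref{prop:graphformintegral}, proved there by the same base-change and projection-formula bookkeeping you sketch.
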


By our definition of tautological forms it suffices to prove that the system of linear subspaces \(S^*(\cC_g^r) \subseteq \tR^*(\cC_g^r)\) spanned by forms associated to \(r\)-marked graphs is a system of sub-\(\R\)-algebras, that the system is closed under pullbacks and fiber integrals, and that \(h\) is contained in \(S^*(\cC_g^2)\).  The last item is accomplished by Example \ref{ex:h-as-graph-form}. The first two items will be accomplished by the propositions below.

We start by proving that \(S^*(\cC_g^r) \subseteq \tR^*(\cC_g^r)\) is a subring for every \(r\geq 0\). First of all, the form associated to the unique \(r\)-marked graph consisting of \(r\) vertices and no edges is \(1\). The following proposition implies that \(S^*(\cC_g^r)\) is closed under wedge products and therefore a subring of \(\tR^*(\cC_g^r)\). 

\begin{prop} \label{prop:graphformwedge}
	Let \(\Gamma = (V,E,m)\) and \(\Gamma' = (V',E',m')\) be two \(r\)-marked graphs, and let \(\alpha_\Gamma\) and \(\alpha_{\Gamma'}\) be the associated tautological forms on \(\cC_g^r\). Then
	\[\alpha_\Gamma \wedge \alpha_{\Gamma'} = \alpha_{\Gamma \sqcup_r \Gamma'}.\]
\end{prop}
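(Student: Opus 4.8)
The plan is to compute both sides of the claimed identity $\alpha_\Gamma \wedge \alpha_{\Gamma'} = \alpha_{\Gamma\sqcup_r\Gamma'}$ directly from the definitions, reducing the wedge product of two fiber integrals to a single fiber integral over the glued graph by means of a base-change (Fubini) argument. First I would fix bijective extensions $\bar m\colon\{1,\dots,r+u\}\isoto V$ and $\bar m'\colon\{1,\dots,r+u'\}\isoto V'$ of the markings of $\Gamma$ and $\Gamma'$, where $u$ and $u'$ are the numbers of unmarked vertices. These give forms $\mu_\Gamma$ on $\cC_g^{r+u}$ and $\mu_{\Gamma'}$ on $\cC_g^{r+u'}$, and $\alpha_\Gamma=\int_{p_{1,\dots,r}}\mu_\Gamma$, $\alpha_{\Gamma'}=\int_{p_{1,\dots,r}}\mu_{\Gamma'}$. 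The set of vertices $V''$ of $\Gamma\sqcup_r\Gamma'$ is the pushout of $m$ and $m'$ along $\{1,\dots,r\}$; concretely it is $V\sqcup V'$ with the $r$ marked vertices identified pairwise, so it has exactly $r+u+u'$ elements and a natural bijective extension $\bar m''\colon\{1,\dots,r+u+u'\}\isoto V''$ that restricts to $\bar m$ on the first $r+u$ coordinates and to $\bar m'$ on the $r$ marked plus the $u'$ unmarked coordinates of $\Gamma'$.

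Next I would identify the relevant cartesian square. The commutative diagram of finite sets expressing $\{1,\dots,r+u+u'\}$ as the pushout of $\{1,\dots,r+u\}\leftarrow\{1,\dots,r\}\rightarrow\{1,\dots,r+u'\}$ (where the maps are the evident coordinate inclusions) yields, by the observation in Section~\ref{sec:tautformsassoctomarkedgraphs} that pushout diagrams of sets correspond to cartesian diagrams of moduli stacks, a cartesian square
\[
\begin{tikzcd}
\cC_g^{r+u+u'} \arrow[r,"a"]\arrow[d,"b"'] & \cC_g^{r+u'}\arrow[d,"p_{1,\dots,r}"] \\
\cC_g^{r+u}\arrow[r,"p_{1,\dots,r}"'] & \cC_g^r,
\end{tikzcd}
\]
where $a$ forgets the $u$ middle coordinates and $b$ forgets the last $u'$ coordinates, and both vertical maps are tautological submersions. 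Applying the projection formula and the base-change formula for fiber integration along this square to the form $b^*\mu_\Gamma \wedge a^*\mu_{\Gamma'}$, I get
\[
\alpha_\Gamma \wedge \alpha_{\Gamma'} = \Big(\int_b b^*\mu_\Gamma\Big)\wedge\Big(\text{pullback of }\alpha_{\Gamma'}\Big) = \int_{p_{1,\dots,r}\colon\cC_g^{r+u+u'}\to\cC_g^r} b^*\mu_\Gamma \wedge a^*\mu_{\Gamma'},
\]
after which it remains to check that $b^*\mu_\Gamma \wedge a^*\mu_{\Gamma'} = \mu_{\Gamma\sqcup_r\Gamma'}$ with respect to the extension $\bar m''$. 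This last equality is essentially bookkeeping: each edge $e$ of $\Gamma$ contributes $p_{ij}^*h$ on $\cC_g^{r+u}$ with $i,j\le r+u$, and $b^*$ of this is $p_{ij}^*h$ on $\cC_g^{r+u+u'}$ for the \emph{same} indices, which matches the contribution of $e$ viewed as an edge of $\Gamma\sqcup_r\Gamma'$ under $\bar m''$; similarly $a^*$ of an edge-form of $\Gamma'$ reindexes the unmarked coordinates of $\Gamma'$ to the last $u'$ slots while fixing the marked ones, again matching $\bar m''$. Since the edge multiset of $\Gamma\sqcup_r\Gamma'$ is precisely $E+E'$ (gluing identifies only vertices, never edges), taking the wedge over all edges gives $\mu_{\Gamma\sqcup_r\Gamma'}$.

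The main obstacle I anticipate is not conceptual but organizational: one has to be careful that the chosen bijective extensions $\bar m,\bar m',\bar m''$ are genuinely compatible across the cartesian square so that the edge-by-edge matching is literally an equality of forms and not merely an equality up to a permutation of coordinates. Because $\alpha_\Gamma$ is independent of the choice of extension (by the Fubini remark preceding its definition), I can simply \emph{choose} compatible extensions at the outset; with that choice made, the verification $b^*\mu_\Gamma\wedge a^*\mu_{\Gamma'}=\mu_{\Gamma\sqcup_r\Gamma'}$ becomes immediate. A minor secondary point is to confirm that loops are handled correctly, but a loop at $\bar m(i)$ contributes $p_i^*e^A$, whose pullback along $b$ (which fixes coordinate $i\le r+u$) is again $p_i^*e^A$ for the same index, so loops cause no difficulty. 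I expect the write-up to be short once the cartesian square and the compatible extensions are in place.
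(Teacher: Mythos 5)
Your proposal is correct and follows essentially the same route as the paper's proof: the same pushout-of-sets/cartesian-square of moduli stacks, the same application of the projection and base-change formulas, and the same edge-by-edge verification that $b^*\mu_\Gamma\wedge a^*\mu_{\Gamma'}=\mu_{\Gamma\sqcup_r\Gamma'}$ for compatibly chosen extensions of the markings. The only blemish is the garbled intermediate expression $\bigl(\int_b b^*\mu_\Gamma\bigr)\wedge(\cdots)$, which does not live on $\cC_g^r$ as written, but the endpoints of your computation and the overall argument are exactly those of the paper.
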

\begin{proof}
	Assume that \(\Gamma\) and \(\Gamma'\) have respectively \(u\) and \(u'\) unmarked vertices. Choose bijective extensions
	\begin{align*}
		\bar m: \{1,\dots,r+u\} & \isoto V \\
		\bar m': \{1,\dots,r+u'\} & \isoto V'
	\end{align*}
	of \(m\) and \(m'\). Let \(\phi: \{1,\dots,r+u\} \to \{1,\dots,r+u+u'\}\) be the inclusion, and define the map
	\[
		\psi: \{1,\dots,r+u'\} \to \{1,\dots,r+u+u'\}: k \mapsto \begin{cases} k & \text{if } k\leq r\\ k+u & \text{if } k>r. \end{cases}
	\]
	It follows that the diagram
	\[
	\begin{tikzcd}
		\{1,\dots,r+u+u'\} & \{1,\dots,r+u\} \arrow[l,"\phi"'] \\
		\{1,\dots,r+u'\} \arrow[u,"\psi"'] & \{1,\dots,r\} \arrow[u, hook]\arrow[l,hook']
	\end{tikzcd}
	\]
	is a pushout diagram of sets, so we have the associated cartesian diagram of moduli stacks
	\[ 
	\begin{tikzcd}[column sep=7em]
		\cC_g^{r+u+u'} \arrow[r,"p_{1,\dots,r+u}"] \arrow[d,"p_{1,\dots,r,r+u+1,\dots,r+u+u'}"'] & \cC_g^{r+u} \arrow[d,"p_{1,\dots,r}"] \\ 
		\cC_g^{r+u'} \arrow[r,"p_{1,\dots,r}"] & \cC_g^r.
	\end{tikzcd}
	\]
	Now let \(\Gamma'' = (V'',E'',m'') = \Gamma \sqcup_r \Gamma'\). By the universal property of the pushout, we have an induced \(r+u+u'\)-marking
	\[\bar m'': \{1,\dots,r+u+u'\} \isoto V''\]
	of the set of vertices \(V''\) of \(\Gamma''\) that extends \(m''\). If \(e\in E\) is an edge in \(\Gamma\) between vertices \(\bar m(i)\) and \(\bar m(j)\), then the corresponding edge in \(\Gamma''\) has endpoints \(\bar m''(\phi(i))\) and \(\bar m''(\phi(j))\). Similarly, if \(e\in E'\) is an edge in \(\Gamma'\) between vertices \(\bar m'(i)\) and \(\bar m'(j)\), then the corresponding edge in \(\Gamma''\) has endpoints \(\bar m''(\psi(i))\) and \(\bar m''(\psi(j))\). It follows that
	\begin{align*}
		\mu_{\Gamma''} &= \bigwedge_{e\in E''} h_e \\
					   &= \bigwedge_{e\in E} p_{1,\dots,r+u}^* h_e \wedge \bigwedge_{e\in E'} p_{1,\dots,r,r+u+1,\dots,r+u+u'}^* h_e \\
		      &= p_{1,\dots,r+u}^* \mu_\Gamma \wedge p_{1,\dots,r,r+u+1,\dots,r+u+u'}^* \mu_{\Gamma'}.
	\end{align*}
	Using the base change formula and the projection formula for fiber integrals, we find that the fiber integral \(\alpha_{\Gamma \sqcup_r \Gamma'}\) equals \(\alpha_\Gamma \wedge \alpha_{\Gamma'}\). 	
\end{proof}

Next, we will show that the system of vector spaces \(S^*(\cC_g^r)\subseteq \tR^*(\cC_g^r)\) is closed under pullbacks along tautological morphisms. Let \(f^\phi: \cC_g^r \to \cC_g^s\) be a tautological morphism, induced by a map \(\phi: \{1,\dots,s\} \to \{1,\dots,r\}\). Recall  that \(\phi\) induces a pushforward operator \(\phi_*: \cG_r \to \cG_s\) from \(r\)-marked graphs to \(s\)-marked graphs. The following proposition implies that the pullback map \(f^{\phi,*}\) on differential forms is compatible with the pushforward map on graphs. From this one easily deduces that the system of forms \(S^*(\cC_g^r)\) is closed under pullbacks along tautological maps.
\begin{prop} \label{prop:graphformpullback}
	Let \(f^\phi: \cC_g^r \to \cC_g^s\) be the tautological morphism associated to a map \(\phi: \{1,\dots,s\}\to\{1,\dots,r\}\). Suppose that \(\alpha_\Gamma \in S^*(\cC_g^s)\) is the form associated to an \(s\)-marked graph \(\Gamma\). Then 
	\[f^{\phi,*} \alpha_\Gamma = \alpha_{\phi_*\Gamma}\]
	with \(\phi_*\Gamma\) the pushforward of \(\Gamma\) along \(\phi\).
\end{prop}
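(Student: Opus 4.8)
The plan is to route both $f^{\phi,*}\alpha_\Gamma$ and $\alpha_{\phi_*\Gamma}$ through a single cartesian square of moduli stacks and then invoke the base change formula for fiber integrals. Write $\Gamma=(V,E,m)$ with $u$ unmarked vertices and fix a bijective extension $\bar m\colon\{1,\dots,s+u\}\isoto V$. Since $\phi_V\colon V\to V'$ restricts to a bijection between the unmarked vertices of $\Gamma$ and those of $\phi_*\Gamma=(V',E',m')$, the graph $\phi_*\Gamma$ again has exactly $u$ unmarked vertices, and I would choose the bijective extension $\bar m'\colon\{1,\dots,r+u\}\isoto V'$ of $m'$ determined by $\bar m'(k)=m'(k)$ for $k\le r$ and $\bar m'(r+k)=\phi_V(\bar m(s+k))$ for $1\le k\le u$. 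Define $\tilde\phi\colon\{1,\dots,s+u\}\to\{1,\dots,r+u\}$ by $\tilde\phi(k)=\phi(k)$ for $k\le s$ and $\tilde\phi(s+k)=r+k$ otherwise. Comparing with the defining pushout square \ref{diag:pushout} of $\phi_*\Gamma$ one checks $\phi_V\circ\bar m=\bar m'\circ\tilde\phi$.

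First I would verify that $f^{\tilde\phi,*}\mu_\Gamma=\mu_{\phi_*\Gamma}$. Identify each edge of $\Gamma$ with the unordered pair $\{i,j\}$ of indices of its endpoints under $\bar m$ (loops being the case $i=j$), so that $h_e=p_{i,j}^*h$ uniformly; then the induced-graph construction identifies the corresponding edge of $\phi_*\Gamma$ with $\{\tilde\phi(i),\tilde\phi(j)\}$, so that $h_{e'}=p_{\tilde\phi(i),\tilde\phi(j)}^*h$ on $\cC_g^{r+u}$. Functoriality of tautological morphisms gives $p_{i,j}\circ f^{\tilde\phi}=p_{\tilde\phi(i),\tilde\phi(j)}$ — an identity that, because $i=j$ is permitted, automatically also handles edges that get contracted to loops. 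Hence $f^{\tilde\phi,*}h_e=h_{e'}$ for every edge, and therefore $f^{\tilde\phi,*}\mu_\Gamma=\bigwedge_{e'\in E'}h_{e'}=\mu_{\phi_*\Gamma}$.

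Next I would record the square
\[
\begin{tikzcd}[column sep=3.5em]
\cC_g^{r+u} \arrow[r,"f^{\tilde\phi}"] \arrow[d,"p_{1,\dots,r}"'] & \cC_g^{s+u} \arrow[d,"p_{1,\dots,s}"] \\
\cC_g^r \arrow[r,"f^\phi"'] & \cC_g^s
\end{tikzcd}
\]
which commutes since both composites are the tautological morphism attached to the map $k\mapsto\phi(k)$ on $\{1,\dots,s\}$. By the criterion recalled at the start of Section~\ref{sec:tautformsassoctomarkedgraphs}, this square is cartesian precisely because the corresponding square of finite sets exhibits $\{1,\dots,r+u\}$ as the pushout of $\{1,\dots,r\}\xleftarrow{\phi}\{1,\dots,s\}\hookrightarrow\{1,\dots,s+u\}$, which is immediate from the definition of $\tilde\phi$. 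The index maps $\{1,\dots,r\}\hookrightarrow\{1,\dots,r+u\}$ and $\{1,\dots,s\}\hookrightarrow\{1,\dots,s+u\}$ being injective, $p_{1,\dots,r}$ and $p_{1,\dots,s}$ are proper submersions, so the base change formula for fiber integrals applies and yields
\[
f^{\phi,*}\alpha_\Gamma=f^{\phi,*}\int_{p_{1,\dots,s}}\mu_\Gamma=\int_{p_{1,\dots,r}}f^{\tilde\phi,*}\mu_\Gamma=\int_{p_{1,\dots,r}}\mu_{\phi_*\Gamma}=\alpha_{\phi_*\Gamma},
\]
the last equality using that $\bar m'$ is a legitimate bijective extension of $m'$ and that $\alpha_{\phi_*\Gamma}$ does not depend on the chosen extension.

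Most of this is bookkeeping: the identification of edges with index pairs, the relation $p_{i,j}\circ f^{\tilde\phi}=p_{\tilde\phi(i),\tilde\phi(j)}$, and the projection/base-change formalism are all routine. The one point that deserves care is arranging $\bar m$, $\bar m'$ and $\tilde\phi$ so that the square above is simultaneously commutative and, via the pushout criterion, cartesian; once this is in place the conclusion is formal.
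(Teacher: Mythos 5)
Your proof is correct and follows essentially the same route as the paper, which only sketches this argument by reference to Proposition~\ref{prop:graphformwedge}: extend the marking of $\Gamma$ to a bijective labeling, induce a compatible labeling on $\phi_*\Gamma$, check that $f^{\tilde\phi,*}\mu_\Gamma=\mu_{\phi_*\Gamma}$, and conclude by base change along the cartesian square coming from the pushout of index sets. Your write-up simply supplies the bookkeeping the paper omits.
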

\begin{proof}
	The proof is similar to the proof of Proposition \ref{prop:graphformwedge}, so only a short sketch is given here. We extend the labeling on \(\Gamma\) to an \((s+u)\)-labeling, with \(u\) the number of unmarked vertices of \(\Gamma\). This induces an \((r+u)\)-labeling of \(\phi_*\Gamma\), and it follows that the pullback of \(\mu_\Gamma\) along the induced map \(\cC_g^{r+u} \to \cC_g^{s+u}\) equals \(\mu_{\phi_*\Gamma}\). By the base change formula the desired result follows. 
\end{proof}

Now, let \(f^\phi: \cC_g^r \to \cC_g^s\) be a tautological submersion, associated to an injective map \(\phi: \{1,\dots,s\} \hookrightarrow \{1,\dots,r\}\).	Recall that we have a pullback map \(\phi^*: \cG_r \to \cG_s\). The following proposition shows that the pullback map on graphs is compatible with the fiber integral map on differential forms. This implies that the system \(S^*(\cC_g^r)\subseteq \tR^*(\cC_g^r)\) is closed under fiber integrals.

\begin{prop} \label{prop:graphformintegral}
	Let \(\phi: \{1,\dots,s\} \to \{1,\dots,r\}\) be an injective map, and let \(f^\phi: \cC_g^r \to \cC_g^s\) be the associated tautological submersion. Let \(\Gamma\in\cG_r\) be an \(r\)-marked graph, and let \(\phi^*\Gamma\) be the \(s\)-marked graph induced by \(\phi\). Then
	\[\int_{f^\phi} \alpha_\Gamma = \alpha_{\phi^*\Gamma}\]
\end{prop}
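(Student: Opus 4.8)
The plan is to follow the pattern of the proofs of Propositions~\ref{prop:graphformwedge} and~\ref{prop:graphformpullback}: choose the bijective extensions of the markings on $\Gamma$ and on $\phi^*\Gamma$ in a compatible way, rewrite both sides as fiber integrals over $\cC_g^{r+u}$ of products of pulled-back copies of $h$, and then conclude with one application of functoriality of fiber integration and one application of the base change formula.

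Concretely, write $\Gamma = (V,E,m)$ and let $u$ be its number of unmarked vertices, so $|V| = r+u$; fix a bijective extension $\bar m \colon \{1,\dots,r+u\} \isoto V$ of $m$, giving $\mu_\Gamma \in A^*(\cC_g^{r+u})$ and $\alpha_\Gamma = \int_{p_{1,\dots,r}} \mu_\Gamma$. The graph $\phi^*\Gamma = (V,E,m\circ\phi)$ has the same underlying graph but $r+u-s$ unmarked vertices. Since $\alpha_{\phi^*\Gamma}$ does not depend on the chosen bijective extension of $m\circ\phi$, I will take this extension to be $\bar m\circ\sigma$, where $\sigma$ is a permutation of $\{1,\dots,r+u\}$ with $\sigma(j) = \phi(j)$ for all $j\le s$; such a $\sigma$ exists because $j\mapsto\phi(j)$ is an injection $\{1,\dots,s\}\to\{1,\dots,r+u\}$, being a composite of injections. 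With this choice the first $s$ values of $\bar m\circ\sigma$ are exactly the marked vertices of $\phi^*\Gamma$, so that $\alpha_{\phi^*\Gamma} = \int_{p_{1,\dots,s}} \mu_{\phi^*\Gamma}$, with $\mu_{\phi^*\Gamma} \in A^*(\cC_g^{r+u})$ formed from the extension $\bar m\circ\sigma$.

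Let $f^{\sigma^{-1}} \colon \cC_g^{r+u} \to \cC_g^{r+u}$ be the (iso)morphism associated to $\sigma^{-1}$. A direct check on each edge $e \in E$ — using $p_{i,j} \circ f^{\sigma^{-1}} = p_{\sigma^{-1}(i),\sigma^{-1}(j)}$, which applies equally to loops — shows that the edge forms satisfy $h_e^{\phi^*\Gamma} = (f^{\sigma^{-1}})^* h_e^{\Gamma}$, hence $\mu_{\phi^*\Gamma} = (f^{\sigma^{-1}})^* \mu_\Gamma$. On the other hand, writing $\iota \colon \{1,\dots,s\} \hookrightarrow \{1,\dots,r+u\}$ for $\phi$ followed by the standard inclusion $\{1,\dots,r\}\hookrightarrow\{1,\dots,r+u\}$, we have $f^\phi \circ p_{1,\dots,r} = f^\iota$, so functoriality of fiber integration gives $\int_{f^\phi}\alpha_\Gamma = \int_{f^\phi}\int_{p_{1,\dots,r}} \mu_\Gamma = \int_{f^\iota} \mu_\Gamma$.

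It remains to identify $\int_{f^\iota}\mu_\Gamma$ with $\int_{p_{1,\dots,s}}\mu_{\phi^*\Gamma}$. For this I will use the commutative square of moduli stacks with left vertical $p_{1,\dots,s}$, top horizontal $f^{\sigma^{-1}}$, right vertical $f^\iota$ and bottom horizontal $\id_{\cC_g^s}$; commutativity holds because $\sigma^{-1}\circ\iota$ equals the standard inclusion, which is the index map of $p_{1,\dots,s}$, so $f^\iota\circ f^{\sigma^{-1}} = p_{1,\dots,s}$. On index sets this corresponds to the square with horizontal maps $\sigma^{-1}$ and $\id$ and vertical maps the standard inclusion $\{1,\dots,s\}\hookrightarrow\{1,\dots,r+u\}$ and $\iota$; since $\sigma^{-1}$ is a bijection and $\sigma^{-1}\circ\iota$ is the standard inclusion, this index square is a pushout, hence the stack square is cartesian. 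The base change formula for fiber integrals then yields $\int_{f^\iota} = \int_{p_{1,\dots,s}}\circ(f^{\sigma^{-1}})^*$, and therefore $\int_{f^\phi}\alpha_\Gamma = \int_{f^\iota}\mu_\Gamma = \int_{p_{1,\dots,s}}(f^{\sigma^{-1}})^*\mu_\Gamma = \int_{p_{1,\dots,s}}\mu_{\phi^*\Gamma} = \alpha_{\phi^*\Gamma}$. The only delicate point is the combinatorial bookkeeping — choosing $\sigma$ so that all the relevant index-set squares genuinely commute and are pushouts; each remaining step is routine and parallel to the proof of Proposition~\ref{prop:graphformwedge}.
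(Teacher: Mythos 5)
Your proposal is correct and is essentially the paper's own argument: the permutation $\sigma$ extending $\phi$ plays exactly the role of the paper's $\bar\phi$, the identity $\mu_{\phi^*\Gamma}=(f^{\sigma^{-1}})^*\mu_\Gamma$ is the paper's $\mu_\Gamma=f^{\bar\phi,*}\mu_{\phi^*\Gamma}$, and the final step via functoriality of fiber integration plus base change over the square $f^\iota\circ f^{\sigma^{-1}}=p_{1,\dots,s}$ is the same computation $\int_{f^\phi}\int_{p_{1,\dots,r}}\mu_\Gamma=\int_{p_{1,\dots,s}}\int_{f^{\bar\phi}}\mu_\Gamma$ as in the paper.
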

\begin{proof}
	Let \(u\) be the number of unmarked vertices in \(\Gamma\). Extend the inclusion \(\phi: \{1,\dots,s\} \to \{1,\dots,r\}\) to a permutation \(\{1,\dots,r\} \to \{1,\dots,r\}\), and then join this map with the identity on \(\{r+1,\dots,r+u\}\) to obtain a bijective map
	\[\bar\phi: \{1,\dots,r+u\} \isoto \{1,\dots,r+u\}\]
	that extends \(\phi\).

	Moreover, choose a bijective extension \(\bar m:\{1,\dots,r+u\} \isoto V\) of the marking \(m\) of \(\Gamma\). We immediately obtain an extension \[\overline{m\phi} = \bar m \circ \bar \phi: \{1,\dots,r+u\} \isoto V\] of the marking \(m\phi\) of the \(s\)-marked graph \(\phi^*\Gamma = (V,E,m\phi)\). We have a commutative diagram of sets, inducing a commutative diagram of moduli stacks:
	\[ \begin{tikzcd}
		\{1,\dots,r+u\} & \{1,\dots,r\} \arrow[l,"\supseteq"'] \\
		\{1,\dots,r+u\} \arrow[u,"\bar\phi"'] & \{1,\dots,s\} \arrow[l,"\supseteq"'] \arrow[u,"\phi"']
	\end{tikzcd} \hspace{0.15\linewidth}
	\begin{tikzcd}[column sep=7ex]
		\cC_g^{r+u} \arrow[r,"p_{1,\dots,r}"]\arrow[d,"f^{\bar\phi}"] & \cC_g^r \arrow[d,"f^\phi"] \\
		\cC_g^{r+u} \arrow[r,"p_{1,\dots,s}"] & \cC_g^s.
	\end{tikzcd}
	\]

	If \(e\) is an edge in \(\Gamma\) with endpoints \(\bar m(i), \bar m(j)\), then the corresponding edge \(\phi^* e\) in \(\phi^*\Gamma\) has endpoints \(\overline{m\phi}(\bar\phi^{-1}(i))\) and \(\overline{m\phi}(\bar\phi^{-1}(j))\). It follows that the corresponding 2-forms on \(\cC_g^{r+u}\) are related as follows:
	\[h_e = f^{\bar\phi,*} h_{\phi^* e}.\]
	From this, we find that 
	\[\mu_\Gamma = f^{\bar\phi,*} \mu_{\phi^*\Gamma},\]
	so
	\[\mu_{\phi^*\Gamma} = \int_{f^{\bar\phi}} \mu_\Gamma.\]
	We therefore have:
	\[\int_{f^\phi} \alpha_\Gamma = \int_{f^\phi}\int_{p_{1,\dots,r}} \mu_\Gamma = \int_{p_{1,\dots,s}}\int_{f^{\bar\phi}} \mu_\Gamma = \int_{p_{1,\dots,s}} \mu_{\phi^*\Gamma} = \alpha_{\phi^*\Gamma},\]
	proving the proposition.
\end{proof}

\section{Contracted graphs}\label{sec:contractedgraphs}

Let \(\Gamma\) be an \(r\)-marked graph. 
\begin{definition}
	We say \(\Gamma\) is \emph{contracted} if all its unmarked vertices have degree at least 3, and each unmarked vertex of degree 3 is incident to three distinct edges. 
\end{definition}

If a graph is not contracted, we can attempt to turn this graph into a contracted graph by altering the problematic vertices. 

\begin{definition} \label{def:contracting}
	Let \(\Gamma=(V,E,m)\) be an \(r\)-marked graph, and let \(v\in V\) be an unmarked vertex such that \(\deg(v)\leq 2\), or such that \(\deg(v)=3\) and \(v\) is incident to a loop. The graph obtained from \(\Gamma\) by \emph{contracting} \(v\) is an \(r\)-marked graph \(\Gamma'\) defined by the following operation:
	\begin{enumerate}\setcounter{enumi}{-1}
		\item If \(\deg v = 0\), remove \(v\);\label{def:contr-0}
		\item If \(\deg v = 1\), remove \(v\) and the unique edge incident to \(v\);\label{def:contr-1}
		\item If \(\deg v = 2\), \emph{smooth out} the vertex \(v\); that is:
			\begin{enumerate}
				\item If \(v\) is incident to two distinct edges, whose other endpoints \(w,w'\) are distinct, remove \(v\) and these two edges, and add an edge between \(w\) and \(w'\);\label{def:contr-2a}
				\item If \(v\) is incident to two distinct edges, whose second endpoint is the same vertex \(w\), remove \(v\) and these two edges, and add a loop at \(w\);\label{def:contr-2b}
				\item If \(v\) is incident to a single loop, remove \(v\) and this loop;\label{def:contr-2c}
			\end{enumerate}
		\item If \(\deg v = 3\), and \(w\) is the other endpoint of the non-loop edge incident to \(v\), remove \(v\), this edge, and the loop at \(v\), and add a loop at \(w\).\label{def:contr-3}
	\end{enumerate}
\end{definition}

It follows that the vertex set of the graph obtained from \(\Gamma\) by contracting \(v\) is equal to \(V\setminus \{v\}\). In case (0), the Euler characteristic drops by $1$ upon contracting; in the other cases (1)--(3) the Euler characteristic remains the same.

If we are given an \(r\)-marked graph \(\Gamma\), we can always reduce \(\Gamma\) to a contracted \(r\)-marked graph by applying a finite amount of graph contractions. 
As the contraction operations only apply to unmarked vertices, it follows that  contraction of vertices commutes with gluing of \(r\)-marked graphs.

\subsection{Counting contracted graphs}\label{sec:countingcontractedgraphs}
The following result will be crucial in obtaining our basic finiteness result on tautological rings.

\begin{thm} \label{thm:finmanycontrgraphs}
	Let \(r\geq 0\), and \(\chi\in\Z\). There are, up to isomorphism, only finitely many contracted \(r\)-marked graphs of characteristic \(\chi\).
\end{thm}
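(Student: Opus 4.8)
The plan is to bound, for fixed $r$ and $\chi$, the number of unmarked vertices and the number of edges of a contracted $r$-marked graph $\Gamma=(V,E,m)$, after which the finiteness statement is immediate: a graph on a bounded vertex set with a bounded number of edges (each edge an unordered pair, so finitely many possibilities) with a marking chosen among finitely many injections admits only finitely many isomorphism classes. So everything reduces to a counting estimate on $|V_-|$ and $|E|$ in terms of $r$ and $\chi$.

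First I would set up the basic bookkeeping. Write $u=|V_-|$ for the number of unmarked vertices, so $|V|=r+u$ and $|E|=r+u-\chi$. The key input is the contractedness hypothesis: every unmarked vertex has degree $\geq 3$. Summing degrees over all vertices gives $2|E|=\sum_{v\in V}\deg v \geq 3u + \sum_{v\in V_+}\deg v \geq 3u$, hence $2(r+u-\chi)\geq 3u$, which rearranges to $u\leq 2(r-\chi)$. This bounds the number of unmarked vertices purely in terms of $r$ and $\chi$, and then $|E|=r+u-\chi\leq 3(r-\chi)$ is bounded as well (note $r-\chi\geq 0$ is forced once $u\geq 0$, or one simply observes that if $r-\chi<0$ there are no such graphs at all, vacuously finite).

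With these two bounds in hand, I would finish as follows. Fix the finite vertex set to be, say, $\{1,\dots,r\}$ for the marked vertices (identified via $m$) together with a subset of $\{r+1,\dots,r+u\}$ with $u\leq 2(r-\chi)$; there are finitely many choices of vertex set up to relabeling of the unmarked vertices. The edge multiset $E$ is a multiset of size $|E|\leq 3(r-\chi)$ of unordered pairs from this finite vertex set, so there are only finitely many possibilities for $E$. Since isomorphism of $r$-marked graphs only remembers, for each pair of vertices, the number of edges joining them (with marked vertices held fixed), every contracted $r$-marked graph of characteristic $\chi$ is isomorphic to one of this finite explicit list. This proves the theorem.

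I expect the only genuinely delicate point to be making sure the degree inequality is applied correctly in the presence of loops — recall from the conventions in Section~\ref{sec:marked_graphs} that a loop contributes $2$ to the degree of its vertex, so the handshake identity $2|E|=\sum_v \deg v$ still holds verbatim, and the estimate $\sum_v \deg v\geq 3u$ only uses $\deg v\geq 3$ for unmarked $v$, which is exactly the definition of contracted (the extra condition about degree-$3$ unmarked vertices being incident to three distinct edges is not even needed for this counting bound, only the degree bound is). A secondary, purely cosmetic, point is to handle the degenerate regime $r-\chi<0$ separately, where the bound forces $u<0$ and hence no contracted graph exists. Neither of these is a real obstacle; the argument is essentially a one-line application of the handshake lemma followed by a finiteness-of-finite-data observation.
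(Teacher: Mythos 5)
Your argument is correct and is essentially identical to the paper's proof: both apply the handshake identity $2|E|=\sum_v\deg v\geq 3u$ to the unmarked vertices, substitute $|E|=r+u-\chi$ to get $u\leq 2r-2\chi$ and $|E|\leq 3r-3\chi$, and conclude by finiteness of graphs with bounded vertex and edge counts. Your additional remarks on loops and the vacuous case $r-\chi<0$ are fine but not needed beyond what the paper already does.
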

\begin{proof}
	Let \(\Gamma\) be a contracted \(r\)-marked graph of characteristic \(\chi\), and let \(u\) denote its number of unmarked vertices, and \(e\) its number of edges. As every unmarked vertex has degree at least 3 it follows that
	\[2e = \sum_{v\in \Gamma} \deg(v) \geq 3u.\]
	After substituting \(e=r+u-\chi\), we find:
	\[u\leq 2r - 2\chi,\]
	and hence
	\[e = r+u-\chi \leq 3r-3\chi.\]
	We have obtained upper bounds for the number of vertices and edges of \(\Gamma\), and a simple combinatorial argument then shows that there can only be finitely many graphs of this form up to isomorphism. 
\end{proof}

\section{Tautological forms and contractions}
 \label{sec:taut_forms_contr}

The purpose of this section is to show that contracted graphs suffice to span the rings of tautological forms. Let \(g\geq 2\) be an integer.
\begin{thm} \label{lem:spanned-by-contracted-graphs}
	Let \(d\geq 0\) and \(r\geq 0\) be integers. The space \(\tR^{2d}(\cC_g^r)\) of tautological forms of degree \(2d\) on \(\cC_g^r\) is the linear span of the forms \(\alpha_\Gamma\) associated to contracted \(r\)-marked graphs \(\Gamma\) with Euler characteristic \(\chi(\Gamma) = r-d\).  
\end{thm}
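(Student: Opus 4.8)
The plan is to combine Theorem~\ref{thm:taut-forms-generated-by-graphs}, which tells us $\tR^{2d}(\cC_g^r)$ is spanned by forms $\alpha_\Gamma$ associated to arbitrary $r$-marked graphs $\Gamma$, with the contraction operation of Definition~\ref{def:contracting}. Since $\alpha_\Gamma$ has degree $2(r-\chi(\Gamma))$, only graphs with $\chi(\Gamma)=r-d$ contribute to degree $2d$; and contracting a vertex in cases (1)--(3) preserves $\chi$ while case (0) drops it. So if we can show that contracting a vertex only changes $\alpha_\Gamma$ by a scalar (possibly zero in case (0), which anyway produces a graph of the wrong Euler characteristic and hence a form that does not lie in $\tR^{2d}$ unless it vanishes), then iterating contractions until we reach a contracted graph expresses every $\alpha_\Gamma$ as a scalar multiple of $\alpha_{\Gamma'}$ with $\Gamma'$ contracted, and the theorem follows.

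The heart of the argument is therefore a lemma of the following shape: if $\Gamma$ has an unmarked vertex $v$ with $\deg v \le 2$, or $\deg v = 3$ with a loop at $v$, and $\Gamma'$ is obtained by contracting $v$, then $\alpha_\Gamma = c\cdot \alpha_{\Gamma'}$ for an explicit constant $c$ depending only on the case. One proves this by working with the form $\mu_\Gamma$ on $\cC_g^{r+u}$ and integrating out the coordinate corresponding to $v$ \emph{first}, using Fubini to reorder the fiber integral $\int_{p_{1,\dots,r}}$ as ``integrate over the $v$-coordinate, then integrate out the remaining unmarked coordinates.'' The partial fiber integral over the $v$-coordinate is a local computation at $v$: it only involves the $2$-forms $h_e$ for edges $e$ incident to $v$, since the other $h_e$'s are pulled back from the complement and come out of the integral by the projection formula. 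The relevant integrals are exactly the identities already in hand: $\int_{p_1} h = 1$ (case $\deg v=1$), $\int_{p_1} h \wedge p_i^* e^A = e^A$ and $\int_{p_1} h^2 = e^A$ (case $\deg v=2$ with a loop on the other side, resp.\ reducing to a single smoothing), Lemma~\ref{lem:inteAc1L}, and Lemma~\ref{lem:p13hp23h} for $\int_{p_{12}} p_{13}^*h\wedge p_{23}^*h = h$ (case $\deg v=2$, two distinct edges). The degree-$3$ case with a loop follows from $\int_{p_1} h\wedge p_i^* e^A = e^A$ together with $\Delta^* h = e^A$, matching clause (3) of the contraction definition (remove the non-loop edge, the loop, and $v$, add a loop at $w$). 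In each case one checks the constant is $1$, except possibly for degenerate subcases; the $\deg v = 0$ case gives $\alpha_\Gamma = (\int_p 1)\,\alpha_{\Gamma'}$, but here $\chi$ drops, so for forms in $\tR^{2d}$ with the matching Euler characteristic this case does not arise among the graphs we need.

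The main obstacle, I expect, is purely bookkeeping: one must carefully set up the bijective extension $\bar m$ so that the $v$-coordinate is the last one, identify precisely which projections $p_{i,j}$ appear among the $h_e$ incident to $v$ under all the sub-cases of Definition~\ref{def:contracting} (distinct endpoints vs.\ shared endpoint vs.\ loop), and verify that after the partial integration the resulting form on $\cC_g^{r+u-1}$ is literally $\mu_{\Gamma'}$ for the contracted graph with its induced extended marking. This is the same base-change-and-projection-formula dance as in the proofs of Propositions~\ref{prop:graphformwedge}--\ref{prop:graphformintegral}, so no new idea is needed, but the case analysis is the part that must be done with care. Once the lemma is established, a short induction on the number of ``bad'' unmarked vertices (using that contraction strictly decreases $|V|+|E|$, or simply that it removes a vertex) finishes the proof, since by the remark following Definition~\ref{def:contracting} every $r$-marked graph reduces to a contracted one in finitely many steps.
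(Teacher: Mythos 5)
Your proposal is correct and follows essentially the same route as the paper: Theorem~\ref{thm:taut-forms-generated-by-graphs} reduces everything to graph-associated forms, and a contraction lemma (the paper's Proposition~\ref{prop:contractions}) is proved exactly as you describe, by integrating out the $v$-coordinate first and invoking Lemma~\ref{lem:inteAc1L}, Lemma~\ref{lem:p13hp23h} and Equation~\ref{lem:inteA}, so that each $\alpha_\Gamma$ becomes a scalar multiple of the form of a contracted graph. The only slip is the claim that the constant is $1$ in every nondegenerate case: when the unmarked vertex $v$ carries a single loop and no other edge, the constant is $2-2g$ (from $\int_{\cC_g/\cM_g}e^A=2-2g$), which of course does not affect the span statement.
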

By combining Theorem \ref{lem:spanned-by-contracted-graphs} with Theorem \ref{thm:finmanycontrgraphs}, we obtain the following.
\begin{thm} \label{thm:taut-ring-degreewise-finite}
	 For all integers \(r\geq 0\) and \(d\geq 0\), the space \(\tR^{2d}(\cC_g^r)\) of tautological forms of degree \(2d\) on \(\cC_g^r\) is finite-dimensional. \qed
\end{thm}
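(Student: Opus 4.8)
I propose a proof of Theorem~\ref{lem:spanned-by-contracted-graphs}; given this, the finite-dimensionality of $\tR^{2d}(\cC_g^r)$ asserted in Theorem~\ref{thm:taut-ring-degreewise-finite} is immediate, since by Theorem~\ref{thm:finmanycontrgraphs} the resulting spanning set of forms is finite. By Theorem~\ref{thm:taut-forms-generated-by-graphs} the space $\tR^*(\cC_g^r)$ is spanned over $\R$ by the forms $\alpha_\Gamma$ attached to \emph{all} $r$-marked graphs $\Gamma$, and $\alpha_\Gamma$ has degree $2(r-\chi(\Gamma))$; hence $\tR^{2d}(\cC_g^r)$ is spanned by those $\alpha_\Gamma$ with $\chi(\Gamma)=r-d$. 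It therefore suffices to prove that for an arbitrary $r$-marked graph $\Gamma$ the form $\alpha_\Gamma$ lies in the $\R$-span of the forms $\alpha_{\Gamma'}$ with $\Gamma'$ contracted and $\chi(\Gamma')=\chi(\Gamma)$. I would do this by induction on the number $u$ of unmarked vertices of $\Gamma$; the base case $u=0$ is trivial, as then $\Gamma$ is itself contracted.

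For the inductive step I distinguish three cases. (i) If $\Gamma$ has an unmarked vertex $v$ of degree $0$, I choose the bijective extension $\bar m$ so that $v$ carries the last coordinate of $\cC_g^{r+u}$; since no edge is incident to $v$, the form $\mu_\Gamma$ is pulled back from $\cC_g^{r+u-1}$ along the map $q$ forgetting that coordinate, and fiber-integrating a form pulled back from the base over the positive-dimensional fibers of $q$ vanishes, so $\alpha_\Gamma=0$ lies trivially in the span. (ii) If $\Gamma$ is contracted we are done. (iii) Otherwise $\Gamma$ has an unmarked vertex $v$ which is of degree $1$, of degree $2$, or of degree $3$ and incident to a loop --- these, together with degree $0$, exhaust the unmarked vertices violating the definition of a contracted graph. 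Let $\Gamma'$ be obtained from $\Gamma$ by contracting $v$ as in Definition~\ref{def:contracting}; then $\Gamma'$ has $u-1$ unmarked vertices, and in each of the cases (1), (2a)--(2c), (3) of that definition one checks that $\chi(\Gamma')=\chi(\Gamma)$. I claim $\alpha_\Gamma=c\,\alpha_{\Gamma'}$ for a nonzero constant $c$; granting this, the induction hypothesis applied to $\Gamma'$ completes the argument.

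The crux is this claim. Choose $\bar m$ with $v$ labelled $r+u$, and restrict $\bar m$ to its first $r+u-1$ arguments to obtain a compatible extension for $\Gamma'$. Since only the edges incident to $v$ involve the last coordinate of $\cC_g^{r+u}$, we may write $\mu_\Gamma=p_{1,\dots,r+u-1}^*\beta\wedge\gamma$, where $\gamma$ is the wedge of the forms $h_e$ over the edges $e$ incident to $v$, and $\beta$ is the $\mu$-form of the graph obtained from $\Gamma'$ by removing the edge(s) newly created in the contraction. Factoring the fiber integral defining $\alpha_\Gamma$ as $\int_{\cC_g^{r+u-1}\to\cC_g^r}\circ\int_{\cC_g^{r+u}\to\cC_g^{r+u-1}}$ and using the projection formula to extract $p_{1,\dots,r+u-1}^*\beta$, it remains to compute the inner integral of $\gamma$, which by the base change formula comes down to the identities of Section~\ref{sec:canonical_h}: $\int_{p_1}h=1$ (a single non-loop edge, case (1), $c=1$); Lemma~\ref{lem:p13hp23h} (two non-loop edges to distinct vertices, case (2a), output $p_{ij}^*h$ and $c=1$); $\int_{p_1}h^2=e^A$ (two parallel non-loop edges, case (2b), output $p_i^*e^A$ and $c=1$); $\int_p e^A=2-2g$ from \eqref{lem:inteA} (a single loop, case (2c), $c=2-2g$); and $\int_{p_1}h\wedge p_i^*e^A=e^A$ from Lemma~\ref{lem:inteAc1L} (a loop together with one non-loop edge, case (3), output $p_i^*e^A$ and $c=1$). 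In every instance the outcome is exactly the $\mu$-contribution of the edges of $\Gamma'$ replacing those removed, so $\alpha_\Gamma=c\,\alpha_{\Gamma'}$ with $c\in\{1,2-2g\}$, which is nonzero because $g\ge 2$.

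The main obstacle is this final case analysis: for each of the five contraction operations one must verify carefully that the inner fiber integral of the decorations carried by the edges at $v$ reproduces the decorations of the replacement edges of $\Gamma'$, placed on the correct coordinates of $\cC_g^{r+u-1}$, which requires keeping precise track of loops versus non-loop edges and of the base-change and projection-formula bookkeeping for the iterated fiber integral. A secondary, more routine point is that the reduction terminates and never gets stuck: each contraction strictly decreases the number of vertices, and any non-contracted $r$-marked graph without an isolated unmarked vertex contains a vertex of one of the three types in case (iii), so the process reaches a contracted graph of the same Euler characteristic.
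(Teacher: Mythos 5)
Your proposal is correct and follows essentially the same route as the paper: Theorem~\ref{thm:taut-ring-degreewise-finite} is obtained by combining Theorem~\ref{lem:spanned-by-contracted-graphs} with Theorem~\ref{thm:finmanycontrgraphs}, and your reduction to contracted graphs via the case-by-case contraction analysis (with constants $0$, $1$, or $2-2g$) is exactly the content of Proposition~\ref{prop:contractions} and its use in the paper's proof of Theorem~\ref{lem:spanned-by-contracted-graphs}.
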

For the proof of Theorem~\ref{lem:spanned-by-contracted-graphs} we need the following technical result.
\begin{prop} \label{prop:contractions}
	Let \(\Gamma=(V,E,m)\) be an \(r\)-marked graph, and suppose that \(\Gamma\) has an unmarked vertex \(v\), such that either $\deg(v)\leq 2$, or $\deg(v)=3$ and $v$ is incident to precisely two edges. Let $\Gamma'$ be the $r$-marked graph obtained from $\Gamma$ by contracting $v$, see Definition~\ref{def:contracting}. 
	\begin{enumerate}
		\item[0.] \label{prop:contr-deg0}If \(\deg v = 0\), then \(\alpha_\Gamma = 0\).  
		\item[1.] \label{prop:contr-deg1}If \(\deg v = 1\), then \(\alpha_\Gamma = \alpha_{\Gamma'}\).
		\item[2a.] \label{prop:contr-deg2-edge}Suppose that \(\deg v = 2\) and that \(v\) has two distinct neighbors \(w\neq w'\). Then \(\alpha_\Gamma = \alpha_{\Gamma'}\).
		\item[2b.] \label{prop:contr-deg2-loop}Suppose that \(\deg v = 2\) and that \(v\) has a single neighbor \(w\neq v\). Then \(\alpha_{\Gamma} = \alpha_{\Gamma'}\).
		\item[2c.] \label{prop:contr-deg2-component}Suppose that \(\deg v = 2\) and that \(v\) is its own neighbor; that is: there is a loop based at \(v\). Then \(\alpha_\Gamma = (2-2g)\alpha_{\Gamma'}\). 
		\item[3.] \label{prop:contr-deg3-loop}Suppose that \(\deg v = 3\) and that $v$ is incident to precisely two edges. Then $\alpha_\Gamma = \alpha_{\Gamma'}$. 
	\end{enumerate}
\end{prop}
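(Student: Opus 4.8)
The plan is to treat all six cases by one mechanism: integrate out the coordinate attached to $v$ \emph{first}, so that what remains is a local computation at $v$ which reduces directly to the basic fiber-integral identities of Section~\ref{sec:canonical_h}.

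First I would fix the following setup. Since $v$ is an unmarked vertex, $\Gamma$ has $u\ge 1$ unmarked vertices; choose a bijective extension $\bar m\colon\{1,\dots,r+u\}\isoto V$ of the marking of $\Gamma$ with $\bar m(r+u)=v$. As contraction only alters the unmarked vertex $v$, the restriction $\bar m'=\bar m|_{\{1,\dots,r+u-1\}}$ is a bijective extension of the marking of $\Gamma'$, and $\Gamma'$ has $u-1$ unmarked vertices. Let $q\colon\cC_g^{r+u}\to\cC_g^{r+u-1}$ be the tautological submersion forgetting the last coordinate; this is a family of compact Riemann surfaces. Every edge of $\Gamma$ not incident to $v$ involves only the coordinates $1,\dots,r+u-1$, so the product $\rho:=\bigwedge_{e\not\ni v}h_e$ is the $q$-pullback of the analogous product on $\cC_g^{r+u-1}$; writing $\theta:=\bigwedge_{e\ni v}h_e$ and using that the $h_e$ are $2$-forms and hence commute, we get $\mu_\Gamma=\theta\wedge q^*\rho$. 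By functoriality of fiber integration along $p_{1,\dots,r}=p_{1,\dots,r}\circ q$ together with the projection formula,
\[ \alpha_\Gamma = \int_{p_{1,\dots,r}} \Big( \big( \int_q \theta \big) \wedge \rho \Big) . \]
Hence it suffices to compute $\int_q\theta\in A^*(\cC_g^{r+u-1})$ in each case and to check that $(\int_q\theta)\wedge\rho$ is exactly $\mu_{\Gamma'}$ built from $\bar m'$, up to the factor $2-2g$ in case~2c.

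Then I would carry out the computation of $\int_q\theta$ case by case. In case~0 there is no edge at $v$, so $\theta=1$ and $\int_q\theta=0$ since $q$ has positive relative dimension; thus $\alpha_\Gamma=0$. In each remaining case $\theta$ is a wedge of one or two of the forms $p_{i,r+u}^*h$ and $p_{r+u}^*e^A$, and I would exhibit a cartesian square, obtained by projecting $\cC_g^{r+u}$ onto a few coordinates, relating $q$ to one of $p_1\colon\cC_g^2\to\cC_g$, $p_{12}\colon\cC_g^3\to\cC_g^2$, or $p\colon\cC_g\to\cM_g$, and then apply the base-change formula for fiber integrals together with the appropriate identity. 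When $v$ has a single non-loop edge to $\bar m(i)$, or two edges to $\bar m(i)$ (cases~1 and~2b), the square over $p_1$ with top map $p_{i,r+u}$ and $\int_{p_1}h=1$, resp.\ $\int_{p_1}h^2=e^A$ (Lemma~\ref{lem:inteAc1L}), gives $\int_q\theta=1$, resp.\ $p_i^*e^A$. When $v$ has edges to distinct vertices $\bar m(i),\bar m(j)$ (case~2a), the square over $p_{12}$ with top map $p_{i,j,r+u}$ and Lemma~\ref{lem:p13hp23h} gives $\int_q\theta=p_{i,j}^*h$. When the only edge at $v$ is a loop (case~2c), the square $\cC_g^{r+u}=\cC_g^{r+u-1}\times_{\cM_g}\cC_g$ with $\int_p e^A=2-2g$ gives $\int_q\theta=2-2g$. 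When $v$ carries one loop and one non-loop edge to $\bar m(i)$ (case~3), writing $\theta=p_{i,r+u}^*(p_2^*e^A\wedge h)$ and using $\int_{p_1}(h\wedge p_2^*e^A)=e^A$ (Lemma~\ref{lem:inteAc1L}) over the square from cases~1/2b gives $\int_q\theta=p_i^*e^A$. In every case, comparing with Definition~\ref{def:contracting} shows that the edge or loop created by contracting $v$ carries exactly the form $\int_q\theta$ just obtained while all other edges are left alone, so $(\int_q\theta)\wedge\rho$ equals $\mu_{\Gamma'}$ (resp.\ $(2-2g)\mu_{\Gamma'}$); integrating along $p_{1,\dots,r}$ then yields the stated equalities.

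The only genuine work here is bookkeeping: selecting, in each case, the right projection of $\cC_g^{r+u}$ that exposes a cartesian square, and verifying that $(\int_q\theta)\wedge\rho$ literally reproduces $\mu_{\Gamma'}$ as assembled from $\bar m'$. I expect this last matching step --- not the individual fiber integrals, which are immediate from Section~\ref{sec:canonical_h} --- to be the point requiring the most care; nothing beyond the identities of Section~\ref{sec:canonical_h} and the base-change and projection formulas for fiber integration is needed.
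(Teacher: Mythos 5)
Your proposal is correct and follows essentially the same route as the paper's proof: the same choice of labeling placing $v$ last, the same factorization of $\mu_\Gamma$ into the edges at $v$ times the pullback of the remaining edges (your $\rho$ is the paper's $\mu_{\Gamma''}$), and the same base-change squares reducing each case to the identities $\int_{p_1}h=1$, $\int_{p_1}h^2=e^A$, $\int_{p_1}h\wedge p_2^*e^A=e^A$, $\int_{p_{12}}p_{13}^*h\wedge p_{23}^*h=h$ and $\int_p e^A=2-2g$. No substantive differences.
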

\begin{proof}[Proof of Theorem~\ref{lem:spanned-by-contracted-graphs}] 
Theorem~\ref{thm:taut-forms-generated-by-graphs}  implies that the space \(\tR^{2d}(\cC_g^r)\)  is the linear span of the forms associated to  \(r\)-marked graphs \(\Gamma\) with Euler characteristic \(\chi(\Gamma) = r-d\).
Proposition \ref{prop:contractions} implies that if $\Gamma$ is an  \(r\)-marked graph and $\Gamma'$ is  an  \(r\)-marked graph that is obtained from $\Gamma$ by successively contracting vertices, then the form $\alpha_\Gamma$ can be obtained from the form $\alpha_{\Gamma'}$ by multiplying it by zero or a power of \((2-2g)\). This shows that \(\tR^{2d}(\cC_g^r)\) is the linear span of forms associated to contracted \(r\)-marked graphs. 
\end{proof}
\begin{proof}[Proof of Proposition \ref{prop:contractions}]
	Let \(\Gamma = (V,E,m)\) be an \(r\)-marked graph, and let \(v\) be an unmarked vertex of degree \(\leq 2\), or an unmarked vertex of degree \(\leq 3\) with a loop. 
	Define a graph \(\Gamma''\) by removing \(v\), and all edges emanating from \(v\), from \(\Gamma\). Moreover, we have the graph $\Gamma'$ that is obtained from $\Gamma$ by contracting $v$. 

	The graph \(\Gamma''\) represents an `intermediate step' in obtaining \(\Gamma'\) from \(\Gamma\). The following picture describes the situation in the case where \(v\) has two distinct neighbors.
	\begin{center}
		\begin{tikzpicture}
			\begin{scope}
				\node [cloud, draw, cloud puffs=20, cloud puff arc=150, aspect=1.5, inner ysep=15] {};
				\draw (-0.5,-0.2) node[vertex,label={\scriptsize\(w\)}]{} to[out=45,in=180] (0,0.2) node[vertex,label={\scriptsize\(v\)}]{} to[out=0,in=135] (0.5,-0.2) node[vertex,label={\scriptsize\(w'\)}]{} ;
				\draw (-1.3,0.5) node {\(\Gamma\)};
			\end{scope}
			\begin{scope}[shift={(4,0)}]
				\node [cloud, draw, cloud puffs=20, cloud puff arc=150, aspect=1.5, inner ysep=15] {};
				\draw (-0.5,-0.2) node[vertex,label={\scriptsize\(w\)}]{} (0.5,-0.2) node[vertex,label={\scriptsize\(w'\)}]{} ;
				\draw (-1.3,0.5) node {\(\Gamma''\)};
			\end{scope}
			\begin{scope}[shift={(8,0)}]
				\node [cloud, draw, cloud puffs=20, cloud puff arc=150, aspect=1.5, inner ysep=15] {};
				\draw (-0.5,-0.2) node[vertex,label={\scriptsize\(w\)}]{} to[out=45,in=135] (0.5,-0.2) node[vertex,label={\scriptsize\(w'\)}]{} ;
				\draw (-1.3,0.5) node {\(\Gamma'\)};
			\end{scope}
			\draw [->,decorate,decoration={snake,amplitude=1.5,segment length=8}] (1.5,0) -- (2.5,0);
			\draw [->,decorate,decoration={snake,amplitude=1.5,segment length=8}] (5.5,0) -- (6.5,0);
		\end{tikzpicture}
	\end{center}

	Let \(u\geq 0\) be such that \(\Gamma\) has \(u+1\) unmarked points. Fix an extension of \(m\) to an \((r+u+1)\)-marking
	\[\bar m: \{1,\dots,r+u+1\} \isoto V,\]
	such that $\bar m(r+u+1)=v$. 

	Restricting \(\bar m\) to \(\{1,\dots,r+u\}\) induces an \((r+u)\)-marking on \(\Gamma'\) and \(\Gamma''\) that extends the \(r\)-marking on these graphs. We obtain differential forms \(\mu_\Gamma\), \(\mu_{\Gamma'}\), and \(\mu_{\Gamma''}\) that live on \(\cC_g^{r+u+1}\), \(\cC_g^{r+u}\), and \(\cC_g^{r+u}\), respectively.

	The inclusions \(\{1,\dots,r\} \subseteq \{1,\dots,r+u\} \subseteq \{1,\dots,r+u+1\}\) induce tautological submersions 
	\[
		\begin{tikzcd}
			\cC_g^{r+u+1} \arrow[r,"q"] \arrow[dr,"pq"'] & \cC_g^{r+u} \arrow[d,"p"] \\
								    & \cC_g^r.
		\end{tikzcd}
	\]
	We have
	\[\alpha_\Gamma = \int_{pq} \mu_\Gamma \quad\text{and}\quad \alpha_{\Gamma'} = \int_{p} \mu_{\Gamma'}.  \]
	If we can prove that \(\smallint_q \mu_\Gamma = 0\) in case 0, \(\smallint_q \mu_\Gamma = \mu_{\Gamma'}\) in cases 1, 2a, 2b, and 3, and \(\smallint_q \mu_\Gamma = (2-2g)\mu_{\Gamma'}\) in case 2c, we are done. 
	\begin{enumerate}
		\item[0.] Suppose \(v\) has degree 0. The set of edges of \(\Gamma\) is equal to the set of edges of \(\Gamma'\), so we obtain
			\[\mu_\Gamma = q^*\mu_{\Gamma'}.\]
			Taking fiber integrals and applying the projection formula yields:
			\[\int_q \mu_\Gamma = \mu_{\Gamma'} \int_q 1 = 0,\]
			and we find that \(\alpha_\Gamma = 0\).
		\item[1.] Suppose \(v\) has degree \(1\); let \(i\in\{1,\dots,r+u\}\) be such that \(\bar{m}(i)\) is the neighbor of \(v\). The graph \(\Gamma\) is obtained from \(\Gamma'\) by adding the vertex \(v\) and the edge between \(v\) and \(\bar{m}(i)\). We therefore have:
			\[\mu_\Gamma = q^* \mu_{\Gamma'} \wedge p_{i,r+u+1}^* h,\]
			so
			\[\int_q \mu_\Gamma = \mu_{\Gamma'} \wedge \int_q p_{i,r+u+1}^* h.\]
			By using the base change formula with the cartesian diagram
			\[\begin{tikzcd}[column sep=8ex]
					\cC_g^{r+u+1} \arrow[rd,phantom,"\square"] \arrow[r,"p_{i,r+u+1}"] \arrow[d,"q"] & \cC_g^2 \arrow[d,"p_1"] \\
				\cC_g^{r+u} \arrow[r,"p_i"'] & \cC_g,
			\end{tikzcd}\]
			we find:
			\[\int_q p_{i,r+u+1}^* h = p_i^* \int_{p_1} h = 1,\]
			where the latter equality follows from Equation~\ref{lem:inteA}.
			This shows that \(\smallint_q \mu_\Gamma = \mu_{\Gamma'}\), so \(\alpha_\Gamma = \alpha_{\Gamma'}\).
		\item[2a.] Suppose \(v\) has degree \(2\), and that \(v\) has two distinct neighbors \(w\) and \(w'\). Let \(i,j\in \{1,\dots,r+u\}\) be such that \(\bar m(i) = w\) and \(\bar m(j)=w'\). In this case, we find
			\[\mu_\Gamma = q^*\mu_{\Gamma''} \wedge p_{i,r+u+1}^* h \wedge p_{j,r+u+1}^* h,\]
			and
			\[\mu_{\Gamma'} = \mu_{\Gamma''} \wedge p_{i,j}^* h.\]
			In this case, another application of the base change formula, together with the identity of forms 
			\[\int_{p_{12}} p_{13}^* h \wedge p_{23}^* h = h\]
			from Lemma \ref{lem:p13hp23h}
			shows that \(\int_q \mu_\Gamma = \mu_{\Gamma'}\), and hence \(\alpha_\Gamma = \alpha_{\Gamma'}\).
		\item[2b.] The proof in this case is very similar to the proofs for cases 1 and 2a. In this case, we use the identity
			\[\int_{p_1} h^2 = e^A\]
			from Lemma \ref{lem:inteAc1L}.
		\item[2c.] Again, the proof of this case is similar to that of the previous cases. The identity used here is
			\[\int_{\cC_g/\cM_g} e^A = (2-2g),\]
			see Equation~\ref{lem:inteA}.
		\item[3.] Finally, the proof in case 3 is analogous to that of earlier cases, where we use the identity
			\[\int_{p_1} h \wedge p_2^* e^A = e^A\]
			from Lemma \ref{lem:inteAc1L}.
	\end{enumerate}
\end{proof}

\section{Proof of Theorem~\ref{thm:finite_dim}} \label{sec:proof_fin_dim} 

Theorem~\ref{thm:finite_dim} is a consequence of Theorem~\ref{thm:taut-ring-degreewise-finite} as follows. 

Let $\cT_g$ denote Teichm\"uller space in genus $g$. Let   \(\cX_g\to \cT_g\) denote the universal family of genus \(g\) Riemann surfaces with Teichm\"uller structure and \(\cX_g^r\) the \(r\)-fold fiber product of \(\cX_g\) over \(\cT_g\). We have a natural surjective submersion $\cX_g^r \to \cC_g^r$. By Lemma~\ref{lem:submersion_inj} we obtain an inclusion \(A^*(\cC_g^r) \to A^*(\cX_g^r)\). As \(\cX_g^r\) is a manifold of (real) dimension \(6g-6+2r\), it follows that \(A^d(\cX_g^r)\) is zero for all \(d>6g-6+2r\). We see that the same is true for \(A^d(\cC_g^r)\) and hence for \(\tR^d(\cC_g^r)\). We see that \[\tR^*(\cC_g^r) = \bigoplus_{d\geq 0} \tR^d(\cC_g^r) = \bigoplus_{d=0}^{3g-3+r} \tR^{2d}(\cC_g^r).\] 
Each of the finitely many direct summands is finite-dimensional by Theorem~\ref{thm:taut-ring-degreewise-finite}, hence the ring \(\tR^*(\cC_g^r)\)
 is itself finite-dimensional. This proves Theorem~\ref{thm:finite_dim}.

\begin{remark}  For each $d \in \Z_{\ge 0}$ there exists a polynomial $f_d \in \Q[X]$ of degree~$2d$ such that for all $r \in \Z_{\ge 0}$ the number of isomorphism classes of contracted \(r\)-marked graphs of characteristic \(r-d\) is given by $f_d(r)$. We refer to \cite[Section~3.8]{thesis} for a proof of this statement. We see that for $d$ fixed, the dimension $\dim \tR^{2d}(\cC_g^r) $ is bounded by a polynomial in $r$, independent of the genus.
\end{remark}

\section{Proof of Theorems~\ref{thm:degree_two} and~\ref{thm:ZK_inv}}\label{sec:2forms}

Let \(r\geq 0\) be an integer. We start with the following result.
\begin{thm} \label{thm:generate_deg_two}
The degree-two part \(\tR^2(\cC_g^r)\) of the ring of tautological forms is spanned by the following collection of 2-forms:
\begin{equation}\label{eqn:formsspanningR2}
	\{p_{ij}^* h: 1\leq i<j\leq r\} \cup \{p_i^*e^A: 1\leq i\leq r\} \cup \{e_1^A, \nu\}.
\end{equation}
\end{thm}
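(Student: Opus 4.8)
The plan is to invoke Theorem~\ref{lem:spanned-by-contracted-graphs}, which tells us that \(\tR^2(\cC_g^r)\) is spanned by the forms \(\alpha_\Gamma\) attached to contracted \(r\)-marked graphs \(\Gamma\) of Euler characteristic \(\chi(\Gamma)=r-1\); it then suffices to classify such graphs and to read off the associated forms. Write \(u\) for the number of unmarked vertices of \(\Gamma\) and \(e\) for the number of edges, so that \(\chi(\Gamma)=r+u-e=r-1\), i.e.\ \(e=u+1\).

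First I would bound \(u\). Since every unmarked vertex of a contracted graph has degree at least \(3\), we get \(2(u+1)=2e=\sum_v\deg(v)\geq 3u\), hence \(u\leq 2\). A short degree count then shows that when \(u\geq 1\) all marked vertices must be isolated: a marked vertex of degree \(\geq 1\) would either push the total degree of the unmarked vertices strictly below \(3u\) (impossible when \(u=2\)), or, when \(u=1\), would leave the unique unmarked vertex with degree \(3\) but incident to only two distinct edges, contradicting contractedness.

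Next I would enumerate the possibilities. When \(u=0\) we have \(e=1\), and the single edge either joins two distinct marked vertices \(m(i),m(j)\) with \(i<j\), giving \(\alpha_\Gamma=p_{ij}^*h\) directly from the definition, or is a loop at some marked vertex \(m(i)\), giving \(\alpha_\Gamma=p_i^*e^A\). When \(u=1\) we have \(e=2\); the unique unmarked vertex \(v\) carries all four edge-ends, and it cannot have degree \(3\) (that would require three distinct incident edges), so \(\deg(v)=4\) and \(\Gamma\) consists of \(v\) with two loops together with \(r\) isolated marked vertices. When \(u=2\) we have \(e=3\); both unmarked vertices then have degree exactly \(3\), hence are each incident to three distinct edges, which with only three edges available forces all three edges to join the two unmarked vertices, so \(\Gamma\) is the theta graph on those vertices together with \(r\) isolated marked vertices.

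For the last two cases I would compute \(\alpha_\Gamma\) exactly as in Example~\ref{ex:ddphi-as-graph-form}: extending the marking so the unmarked vertices become the last coordinates and applying the base-change formula to the cartesian squares \(\cC_g^{r+1}=\cC_g^r\times_{\cM_g}\cC_g\) and \(\cC_g^{r+2}=\cC_g^r\times_{\cM_g}\cC_g^2\) yields \(\alpha_\Gamma=p^*e_1^A\) and \(\alpha_\Gamma=p^*\nu\) respectively, where \(p\colon\cC_g^r\to\cM_g\) is the projection. Every form produced lies in the collection~\eqref{eqn:formsspanningR2}, which proves the theorem. The only delicate point is the small-graph combinatorics: one has to be sure every contracted graph of the prescribed Euler characteristic really is of one of the listed shapes, and that near-misses such as a bridge with a loop at each end are correctly discarded as non-contracted (they contract to the theta graph, respectively to a vertex with two loops). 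There is no analytic content here, and the linear independence required to upgrade this spanning statement to a basis (Theorem~\ref{thm:degree_two}) is a separate matter which, as the relation~\eqref{eq:special_rel_g=2} shows, genuinely depends on \(g\).
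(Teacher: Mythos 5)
Your proposal is correct and follows essentially the same route as the paper: reduce to contracted $r$-marked graphs of characteristic $r-1$ via Theorem~\ref{lem:spanned-by-contracted-graphs}, classify them (single edges, the double loop at one unmarked vertex, the theta graph on two unmarked vertices), and identify the associated forms as $p_{ij}^*h$, $p_i^*e^A$, $e_1^A$ and $\nu$. The paper leaves the graph classification as ``not hard to show''; your degree count filling that in is exactly the intended argument, and the only blemish is the parenthetical aside about near-misses, where the dumbbell graph in fact contracts to the double-loop vertex rather than to the theta graph --- this does not affect the proof, since your classification rules it out directly.
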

\begin{proof}
By Theorem \ref{thm:taut-ring-degreewise-finite}, we find that the space  \(\tR^2(\cC_g^r)\) is spanned by forms \(\alpha_\Gamma\), where \(\Gamma\) ranges over all contracted \(r\)-marked graphs of characteristic \(r-1\). It is not hard to show using an argument as in the proof of Theorem~\ref{thm:finmanycontrgraphs} that a contracted \(r\)-marked graph of characteristic \(r-1\) is one of the following graphs:
\begin{itemize}
	\item Graphs \(\Gamma\) with \(r\) marked vertices, no unmarked vertices, and a single edge: 
	\begin{center}
				\begin{tikzpicture}
					\begin{scope}
						\draw (0,0) node[mvertex,label=1]{} -- (.7,0) node[mvertex,label=2]{};
					\end{scope}
					\begin{scope}[shift={(3,0)}]
						\draw (0,0) node[mvertex,label=1]{} .. controls ++(135:.5) and ++(-135:.5) .. (0,0)  (.7,0) node[mvertex,label=2]{};
					\end{scope}
					\begin{scope}[shift={(6,0)}]
						\draw (0,0) node[mvertex,label=1]{}  (.7,0) node[mvertex,label=2]{} .. controls ++(45:.5) and ++(-45:.5) .. (.7,0);
					\end{scope}
				\end{tikzpicture}
			\end{center}
	If this edge is a loop based at vertex \(i\) then the associated form is \[\alpha_\Gamma = p_i^*e^A.\] If the edge is not a loop, and its endpoints are vertices \(i\) and \(j\), then the associated form is \[\alpha_\Gamma = p_{ij}^*h.\]
	\item The graph \(\Gamma\) with \(r\) marked vertices, one unmarked vertex, and two loops based at the unmarked vertex: 
	\begin{center}
				\begin{tikzpicture}
					\draw (0,0) node[mvertex,label=1]{} (.7,0) node[mvertex,label=2]{} (1.4,0) node[vertex]{}  .. controls ++(135:.5) and ++(-135:.5) .. ++(0,0) .. controls ++(45:.5) and ++(-45:.5) .. ++(0,0);
				\end{tikzpicture}
			\end{center}
	The associated form is
		\[\alpha_\Gamma = \int_{p_{1,\dots,r}:\cC_g^{r+1} \to \cC_g^r} p_{r+1}^*(e^A)^2 = e_1^A\]
		by the base change formula. Note the slight abuse of notation here: we write \(e_1^A\) for the pullback of \(e_1^A\) along the tautological morphism \(\cC_g^r\to\cM_g\). 
	\item The graph \(\Gamma\) with \(r\) marked vertices, two unmarked vertices, and three edges between the unmarked vertices: 
	\begin{center}
				\begin{tikzpicture}
					\draw (0,0) node[mvertex,label=1]{} (.7,0) node[mvertex,label=2]{} (1.4,0) node[vertex]{}  .. controls ++(45:.35) and ++(135:.35) .. (2.1,0) node[vertex]{} .. controls ++(-135:.35) and ++(-45:.35) .. (1.4,0) -- (2.1,0);
				\end{tikzpicture}
			\end{center}
	By using the base change formula we obtain 
		\[\alpha_\Gamma = \int_{p_{1,\dots,r}: \cC_g^{r+2} \to \cC_g^r} p_{r+1,r+2}^* h^3 = \nu\]
		where we again abuse the notation by writing \(\nu\) for the pullback of \(\nu\) along the projection \(\cC_g^r\to \cM_g\).
\end{itemize}
The theorem follows.
\end{proof}
\begin{lem} \label{prop:exist_unique_potential} Assume that $g \ge 3$. Every pluriharmonic function on $\cM_g$ is constant.
\end{lem}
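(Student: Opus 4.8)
The plan is to reduce the lemma to the assertion $\cO(\cM_g)=\C$ for $g\ge 3$, that is, that the only global holomorphic functions on $\cM_g$ are the constants; this is where the genus hypothesis genuinely enters, since it fails for $g=2$ (where $\cM_2$ is affine). So the first two steps are soft, and the third is the heart of the matter.

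First I would pass to Teichm\"uller space $\cT_g$, which is contractible. Pulling back $f$ along the covering $\cT_g\to\cM_g$ gives a $\Gamma_g$-invariant pluriharmonic function $\tilde f$ on $\cT_g$, where $\Gamma_g$ is the mapping class group; conversely, $\Gamma_g$-invariant functions on $\cT_g$ descend to $\cM_g$. Since $H^1(\cT_g;\C)=0$, the $d$-closed holomorphic $1$-form $\partial\tilde f$ is exact, say $\partial\tilde f=d\tilde F_0$ with $\tilde F_0$ holomorphic, and a short computation with bidegrees then shows $\tilde f=2\Re\tilde F_0+c_0$ for some $c_0\in\R$. Writing $\tilde F:=2\tilde F_0+c_0$, we get $\tilde f=\Re\tilde F$, with $\tilde F$ determined by $\tilde f$ up to an additive purely imaginary constant. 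Next I would descend $\tilde F$: for $\gamma\in\Gamma_g$ the function $\gamma^*\tilde F-\tilde F$ is holomorphic with vanishing real part, hence a purely imaginary constant $\i\,c(\gamma)$, and $\gamma\mapsto c(\gamma)$ is a homomorphism $\Gamma_g\to\R$. As $H_1(\Gamma_g;\Z)$ is finite for $g\ge 2$ (indeed trivial for $g\ge 3$), this homomorphism vanishes, so $\tilde F$ is $\Gamma_g$-invariant and descends to a holomorphic function $F$ on $\cM_g$ with $f=\Re F$. It remains to prove $F$ is constant.

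The main obstacle is this last step: a global holomorphic function $F$ on $\cM_g$ is constant for $g\ge 3$. I would regard $F$ as a rational function on the projective variety $\overline{\cM}_g$; being regular on $\cM_g$, its polar divisor is effective and supported on the boundary divisors $\delta_0,\dots,\delta_{\lfloor g/2\rfloor}$, and if $F$ is non-constant this polar divisor is nonzero. Write $\divisor(F)=Z+\sum_i(a_i-b_i)\delta_i$, with $Z\ge 0$ supported in $\cM_g$, $a_i\ge 0$ the orders of vanishing along the $\delta_i$ and $b_i\ge 0$ the pole orders; since $\divisor(F)$ is principal this gives $[Z]=\sum_i(b_i-a_i)[\delta_i]$ in $\Pic(\overline{\cM}_g)_\Q$. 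Now for $g\ge 3$ the boundary classes $\delta_0,\dots,\delta_{\lfloor g/2\rfloor}$ are linearly independent in $\Pic(\overline{\cM}_g)_\Q$ (Harer; Arbarello--Cornalba); moreover $\lambda$ is ample on $\cM_g$ (Cornalba--Harris), while on each $\delta_i$ it restricts to a class pulled back from a moduli space of dimension strictly smaller than $\dim\delta_i=3g-4$, so $\lambda^{3g-4}\cdot\delta_i=0$ for every $i$, whereas $\lambda^{3g-4}\cdot[Z]>0$ whenever $Z\neq 0$. Intersecting the displayed identity with $\lambda^{3g-4}$ therefore forces $Z=0$, and then $a_i=b_i$ for all $i$ by the linear independence, so $\divisor(F)=0$ on $\overline{\cM}_g$; hence $F$ is a nowhere-vanishing regular function on a projective variety, i.e. constant, contradicting non-constancy. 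Consequently $f=\Re F$ is constant. (Alternatively one may simply invoke the known fact that $\cO(\cM_g)=\C$ for $g\ge 3$.) I expect the genuine content to lie entirely in this positivity/Picard argument on $\overline{\cM}_g$; everything preceding it is formal.
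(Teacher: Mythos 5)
Your reduction of the lemma to the statement $\cO(\cM_g)=\C$ is sound: passing to Teichm\"uller space, writing $\tilde f=\Re\tilde F$ there, and descending $\tilde F$ via $\Hom(\Gamma_g,\R)=0$ all work. The gap is in the step you yourself identify as the heart of the matter. You cannot simply ``regard $F$ as a rational function on $\overline{\cM}_g$'': a holomorphic function on a quasi-projective variety whose boundary is a \emph{divisor} need not extend meromorphically across that divisor --- it can have an essential singularity there (think of $e^{1/q}$ in a coordinate $q$ transverse to $\delta_0$, or of $e^{j}$ on the $j$-line, which is why the analogous statement fails for $\cM_{1,1}$ and for the affine variety $\cM_2$). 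Writing $\divisor(F)=Z+\sum_i(a_i-b_i)\delta_i$ presupposes exactly the extension property that carries the real content of the lemma, and nothing in your argument supplies it. (Granting meromorphy, your intersection computation does work, although $\lambda$ is not ample on $\overline{\cM}_g$; what you actually use is that $\lambda$ is pulled back from an ample class on the Satake compactification of $A_g$ under a map that is quasi-finite on $\cM_g$ and contracts each $\delta_i$ to dimension $<3g-4$.)

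The paper's proof sidesteps this entirely by using the \emph{Satake} compactification of $\cM_g$ rather than the Deligne--Mumford one: for $g\ge 3$ its boundary has complex codimension at least two, so a pluriharmonic function on $\cM_g$ extends across the boundary by a Hartogs/Riemann-type extension theorem on a normal compact complex space and is then constant by the maximum principle (this is Kawazumi's Lemma 8.1). That codimension-two boundary is precisely the device that furnishes the extension your argument is missing, and it also explains the failure at $g=2$, where the Satake boundary has codimension one. If you want to salvage your Picard-group route on $\overline{\cM}_g$, you would first have to prove meromorphic extension of $F$ along the boundary divisors separately, which is not easier than invoking the Satake compactification in the first place; alternatively, your parenthetical fallback of citing $\cO(\cM_g)=\C$ for $g\ge3$ is legitimate but outsources exactly the point at issue.
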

\begin{proof} This follows from the existence of a Satake compactification of $\cM_g$, where all boundary components have complex codimension at least two. See also \cite[Lemma 8.1]{kawazumi2009canonical}.
\end{proof}
Let $\varphi \colon \cM_g \to \R$ be the Kawazumi--Zhang invariant, see Equation~\ref{eqn:def_ZK_inv}.
\begin{lem} \label{prop:non-zero} The tautological $2$-form \( \frac{1}{\pi\i} \del\delbar\varphi  \) on $\cM_g$ is non-zero. 
\end{lem}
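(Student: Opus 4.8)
The plan is to derive a contradiction from the assumption that $\frac{1}{\pi\i}\del\delbar\varphi = 0$ on $\cM_g$. If this form vanishes, then $\varphi$ is pluriharmonic on $\cM_g$, and by Lemma~\ref{prop:exist_unique_potential} (using $g \ge 3$) it would follow that $\varphi$ is constant. So it suffices to exhibit a single point of $\cM_g$, or a single one-parameter degeneration, at which $\varphi$ is not constant — equivalently, to show $\varphi$ is non-constant. The cleanest route is an asymptotic one: consider a family of curves degenerating to a stable curve with a non-separating node, and recall the known logarithmic blow-up of the Kawazumi--Zhang invariant near such a boundary point.

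First I would recall the boundary behaviour of $\varphi$. By work of de Jong (and of Zhang, and Wentworth, de Jong--Wentworth) one has: along a holomorphic one-parameter family of smooth curves of genus $g$ degenerating to a stable curve with a single non-separating node, $\varphi$ grows like $-c \log|t| + O(1)$ for an explicit positive constant $c$ depending only on $g$ (for instance $c = \frac{g-1}{6g}$ in one normalization), while along a family degenerating to a curve of compact type the invariant stays bounded. In either case this shows $\varphi$ is unbounded, hence certainly non-constant, on $\cM_g$ — which is all we need. Alternatively, and if one wants to stay strictly within the tools of this paper, one can argue that $\varphi$ non-constant is forced by the fact that $\nu - e_1^A = \frac{1}{\pi\i}\del\delbar\varphi$ together with the observation that $\nu$ and $e_1^A$ are genuinely distinct forms: from Lemma~\ref{lem:inteAc1L} one computes $\int_{\cC_g^2/\cM_g} h^3$ and compares with $\int_{\cC_g/\cM_g}(e^A)^2$, but these coincide in cohomology, so this route needs the finer analytic input anyway.

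Concretely the steps are: (1) assume $\frac{1}{\pi\i}\del\delbar\varphi = 0$; (2) conclude $\varphi$ is pluriharmonic; (3) invoke Lemma~\ref{prop:exist_unique_potential} to get $\varphi$ constant; (4) contradict this by citing the logarithmic divergence of $\varphi$ near the non-separating boundary divisor of $\overline{\cM}_g$ (e.g.\ \cite{dejongtorus} or the original references \cites{kawazumi2009canonical, zhang2010gross} together with de Jong--Wentworth's asymptotics), or equivalently by citing that $\varphi$ is a non-constant, positive, real-analytic function on $\cM_g$. The main obstacle is purely expository: one must decide how much of the asymptotic analysis of $\varphi$ to quote versus reprove. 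Since a full proof of the boundary asymptotics would take us well outside the scope of this paper, I would simply cite it; the genuine mathematical content of the lemma is the combination of the classical fact ``$\varphi$ is non-constant'' with the rigidity statement Lemma~\ref{prop:exist_unique_potential}, and there is no real calculation to grind through once those two inputs are in hand.
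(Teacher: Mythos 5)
Your argument for $g \ge 3$ is essentially the paper's own proof: establish that $\varphi$ is non-constant from its logarithmic divergence near the non-separating boundary (the paper cites the asymptotics in \cite{dejong2014}), and then conclude via the pluriharmonic rigidity statement of Lemma~\ref{prop:exist_unique_potential}. That part is fine, and you are right that the boundary asymptotics should simply be cited rather than reproved.

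There is, however, a genuine gap: the lemma is stated for all $g \ge 2$, and the case $g = 2$ is actually needed downstream (it is what forces $\dim \tR^2(\cM_2) \ge 1$ in Proposition~\ref{prop:dimR2Mg}, and it underlies the first part of Theorem~\ref{thm:ZK_inv} for $g=2$). Your proof only covers $g \ge 3$, because Lemma~\ref{prop:exist_unique_potential} is unavailable in genus $2$: the Satake boundary of $\cM_2$ has a divisorial component, so pluriharmonic functions on $\cM_2$ need not be constant, and indeed non-constancy of $\varphi$ alone does not rule out $\del\delbar\varphi = 0$ there. The paper closes this case by a separate analytic input special to genus $2$: the identity $(\Delta - 5)\varphi = 0$ on $\cM_2$ (with $\Delta$ the Laplace--Beltrami operator for the metric induced from the Siegel upper half space), from which $\del\delbar\varphi = 0$ would give $\Delta\varphi = 0$, hence $\varphi = 0$, contradicting non-constancy (or positivity) of $\varphi$. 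You would need to add this, or some substitute, to make the proof complete.
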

\begin{proof} By observing the asymptotic behavior of \(\varphi\) near the boundary of \(\cM_g\) studied in \cite{dejong2014}, we find  that \(\varphi\) is not constant.  When $g \ge 3$ the result then follows from Lemma~\ref{prop:exist_unique_potential}. When $g=2$, the result follows from the identity $(\Delta-5)\varphi=0$, proved in \cite{Matching}, where $\Delta$ is the Laplace-Beltrami operator with respect to the metric on $\cM_2$ induced from the Siegel metric on the Siegel upper half space in degree two.
\end{proof}
\begin{lem}  \label{prop:some_not_exact} Assume that $g \ge 3$. The form \(e_1^A\) is not exact, in particular \(e_1^A\) and \(\frac{1}{\pi\i} \del\delbar\varphi\) are linearly independent elements of \(\tR^2(\cM_g)\).
\end{lem}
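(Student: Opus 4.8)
The plan is to reduce the non-exactness of $e_1^A$ to the classical non-vanishing of the kappa class $\kappa_1$, and then to combine this with Lemma~\ref{prop:non-zero} to obtain the linear independence statement. Recall from Section~\ref{sec:canonical_h} that $e_1^A$ represents $\kappa_1$ in $H^2(\cM_g;\R)$, and that every tautological form is closed; hence a tautological $2$-form on $\cM_g$ is exact if and only if its de Rham class vanishes. So the first step is to recall that $\kappa_1\ne 0$ in $H^2(\cM_g;\R)$ for $g\ge 3$.

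For this I would use the standard description of the second cohomology of the moduli space. On the open part $\cM_g$ Mumford's relation gives $\kappa_1 = 12\lambda_1$, where $\lambda_1 = c_1(p_*\omega)$ is the first Chern class of the Hodge bundle (see \cite{Mumford1983}); and $\lambda_1$ generates $\Pic(\cM_g)\otimes_\Z\Q\cong\Q$, equivalently $H^2(\cM_g;\Q)\cong\Q$, when $g\ge 3$. Hence $\kappa_1\ne 0$, and therefore $e_1^A$ is not exact. (One could instead argue analytically, using Wolpert's theorem that a positive multiple of $\kappa_1$ is represented by the Weil--Petersson K\"ahler form, which has strictly positive integral over a complete curve in $\cM_g$.)

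It remains to deduce the linear independence of $e_1^A$ and $\frac{1}{\pi\i}\del\delbar\varphi$ in $\tR^2(\cM_g)$. By~\eqref{eqn:lin_comb} we have $\frac{1}{\pi\i}\del\delbar\varphi = \nu - e_1^A$; this form is exact, being $d$ of the $1$-form $\frac{1}{\pi\i}\delbar\varphi$, and it is non-zero by Lemma~\ref{prop:non-zero}. Now suppose $a\,e_1^A + b\,\frac{1}{\pi\i}\del\delbar\varphi = 0$ for some $a,b\in\R$. Passing to de Rham cohomology classes annihilates the exact summand and yields $a\kappa_1 = 0$ in $H^2(\cM_g;\R)$, so $a = 0$; then $b\,\frac{1}{\pi\i}\del\delbar\varphi = 0$, and hence $b = 0$ by Lemma~\ref{prop:non-zero}. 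Therefore $e_1^A$ and $\frac{1}{\pi\i}\del\delbar\varphi$ are linearly independent in $\tR^2(\cM_g)$.

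I do not expect a genuine obstacle: the only ingredient beyond formal manipulation of cohomology classes of closed forms is the non-vanishing of $\kappa_1$ on $\cM_g$ for $g\ge 3$, which is well documented in the literature on the cohomology of the moduli space of curves.
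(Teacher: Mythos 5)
Your proposal is correct and follows essentially the same route as the paper: non-exactness is reduced to the non-vanishing of $\kappa_1$ via Mumford's relation $\kappa_1=12\lambda_1$ and the fact that $\lambda_1$ generates $\Pic(\cM_g)$ (injecting into $H^2(\cM_g,\Q)$), and linear independence then follows by passing to cohomology and invoking Lemma~\ref{prop:non-zero}. The only cosmetic difference is that you spell out the linear-algebra step and mention Harer's $H^2(\cM_g;\Q)\cong\Q$ and a Weil--Petersson alternative, none of which changes the argument.
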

\begin{proof} It follows from \cite[Theorem]{Mumford_Ab_Quot} that the Picard group $\Pic(\cM_g)$ injects into the cohomology group $H^2(\cM_g,\Q)$. Let $\lambda_1 \in \Pic(\cM_g)$ be the first Chern class of the Hodge bundle on $\cM_g$. It is shown in \cite[Section~5]{Mumford1983} that $\kappa_1 = 12\lambda_1$, and in \cite[Theorem~1]{arbarellocornalba1987} it is proved that $\lambda_1$ freely generates the Picard group of $\cM_g$. It follows that the cohomology class $\kappa_1$ does not vanish. This proves that $e_1^A$ is not exact. The linear independence then follows from Lemma~\ref{prop:non-zero}.
\end{proof}

\begin{prop} \label{prop:dimR2Mg}
	If \(g=2\), then \(\tR^2(\cM_g)\) is one-dimensional, and spanned by \(e_1^A\). If \(g\geq 3\), then \(\tR^2(\cM_g)\) is two-dimensional, and spanned by \(e_1^A\) and \(\nu\).
\end{prop}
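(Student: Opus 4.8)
The plan is to combine the spanning statement of Theorem~\ref{thm:generate_deg_two} in the case $r=0$ with the linear relations among $e_1^A$, $\nu$ and $\frac{1}{\pi\i}\del\delbar\varphi$ that have already been recorded. Specializing Theorem~\ref{thm:generate_deg_two} to $r=0$ (so that $\cC_g^0=\cM_g$ and the index sets for $p_{ij}^*h$ and $p_i^*e^A$ are empty) gives at once that $\tR^2(\cM_g)$ is spanned by the two forms $e_1^A$ and $\nu$. Hence $\dim_\R\tR^2(\cM_g)\le 2$ for every $g\ge2$, and what remains is to determine the exact dimension; here the two ranges of $g$ behave differently.

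For $g\ge 3$ I would simply quote Lemma~\ref{prop:some_not_exact}, which says that $e_1^A$ and $\frac{1}{\pi\i}\del\delbar\varphi$ are linearly independent in $\tR^2(\cM_g)$. Since $\frac{1}{\pi\i}\del\delbar\varphi=\nu-e_1^A$ by~\eqref{eqn:lin_comb}, this is the same as saying that $e_1^A$ and $\nu$ are linearly independent. Combined with the spanning statement above, this shows $\tR^2(\cM_g)$ is two-dimensional with basis $\{e_1^A,\nu\}$.

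For $g=2$ the additional ingredient is the relation~\eqref{eq:special_rel_g=2}, $8\nu+12e_1^A=0$ in $\tR^2(\cM_2)$. This yields $\nu=-\frac32 e_1^A$, so that $e_1^A$ alone spans $\tR^2(\cM_2)$ and $\dim_\R\tR^2(\cM_2)\le 1$. To rule out the zero space I would show $e_1^A\ne0$: subtracting, $-\frac52 e_1^A=\nu-e_1^A=\frac{1}{\pi\i}\del\delbar\varphi$, so $e_1^A=-\frac25\cdot\frac{1}{\pi\i}\del\delbar\varphi$, which is non-zero by Lemma~\ref{prop:non-zero} --- note that the proof of that lemma does cover $g=2$, via the eigenvalue equation $(\Delta-5)\varphi=0$. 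Thus $\tR^2(\cM_2)$ is one-dimensional, spanned by $e_1^A$.

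In effect this proposition is pure bookkeeping once the non-vanishing facts are available, so there is no real obstacle internal to its proof. The genuinely non-trivial points are the inputs it rests on: the non-vanishing of $\kappa_1\in H^2(\cM_g,\Q)$ for $g\ge3$ (Lemma~\ref{prop:some_not_exact}, via Mumford's $\kappa_1=12\lambda_1$ and Arbarello--Cornalba's description of $\Pic(\cM_g)$) and the non-constancy of $\varphi$ (Lemma~\ref{prop:non-zero}, via its boundary asymptotics and, for $g=2$, the differential equation it satisfies). Given those, nothing further is required.
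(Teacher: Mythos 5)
Your proposal is correct and follows essentially the same route as the paper: spanning via Theorem~\ref{thm:generate_deg_two}, linear independence for $g\ge 3$ from Lemma~\ref{prop:some_not_exact}, and the relation~\eqref{eq:special_rel_g=2} together with Lemma~\ref{prop:non-zero} and~\eqref{eqn:lin_comb} for $g=2$. Your explicit computation $e_1^A=-\tfrac25\cdot\frac{1}{\pi\i}\del\delbar\varphi$ in genus $2$ is just a slightly more detailed rendering of the paper's ``dimension at least one'' step.
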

\begin{proof} It follows from Theorem~\ref{thm:generate_deg_two} that  \(\tR^2(\cM_g)\) is spanned by \(\nu\) and \(e_1^A\). Therefore, the dimension of \(\tR^2(\cM_g)\) is at most two. From Lemma~\ref{prop:non-zero} combined with Equation \ref{eqn:lin_comb} we find that the dimension of \(\tR^2(\cM_g)\) is at least one. We obtain the proposition for \(g=2\) by observing that in \( \tR^2(\cM_2) \) we have Equation \ref{eq:special_rel_g=2}.
	We obtain the proposition for  \(g\geq 3\)  by Lemma \ref{prop:some_not_exact}.
\end{proof}

\begin{proof}[Proof of Theorem \ref{thm:degree_two}]
We use induction on~$r$.
	The case \(r=0\) is proved in Proposition \ref{prop:dimR2Mg}. For the case \(r=1\) we observe that  by Lemma~\ref{lem:submersion_inj}  the projection \(p:\cC_g \to \cM_g\) induces an inclusion \(p^* : \tR^2(\cM_g) \to \tR^2(\cC_g)\). Moreover, forms in the image of \(p^*\) are in the kernel of the fiber integral along \(p\),  by the projection formula.
	As 
	\[\int_p e^A = (2-2g) \neq 0,\]
	we find that \(e^A\) is not an element of \(p^* \tR^2(\cM_g)\). As $\tR^2(\cC_g)$ is spanned by the forms $e^A$, $e_1^A$, and $\nu$, see Theorem~\ref{thm:generate_deg_two}, we obtain the statement in the case $r=1$.

	Now let \(r\geq 2\), and assume that $\tR^2(\cC_g^s)$ has a basis as given in the Theorem for $0\le s <r$.
	Consider the following three tautological morphisms:
	\begin{align*}
		p_{(r)}: \cC_g^r \to \cC_g^{r-1}:&\; (x_1,\dots,x_r) \mapsto (x_1,\dots,x_{r-1});\\
		p_{(r-1)}: \cC_g^r \to \cC_g^{r-1}:&\; (x_1,\dots,x_r) \mapsto (x_1,\dots,x_{r-2},x_r);\\
		q_{(r-1)}: \cC_g^{r-1} \to \cC_g^{r-2}:&\; (x_1,\dots,x_{r-1}) \mapsto (x_1,\dots,x_{r-2}).
	\end{align*}
	We have a cartesian square
	\[
	\begin{tikzcd}
		\cC_g^r \arrow[r,"p_{(r)}"]\arrow[d,"p_{(r-1)}"']\drar[phantom,"\square"] & \cC_g^{r-1} \arrow[d,"q_{(r-1)}"] \\
		\cC_g^{r-1} \arrow[r,"q_{(r-1)}"] & \cC_g^{r-2}.
	\end{tikzcd}
	\]
	These maps induce linear subspaces \(W_1 := \Im p_{(r)}^*\), \(W_2 := \Im p_{(r-1)}^*\), and \(W_{12} := W_1 \cap W_2\) of \(\tR^2(\cC_g^r)\). The forms $e_1^A$, $\nu$, $p_i^*e^A$, and $p_{ij}^* h$, (possibly) except for the form $p_{r-1,r}^* h$, all lie in $W_1$ or $W_2$. It follows from Theorem~\ref{thm:generate_deg_two} that  
	\[\tR^2(\cC_g^r) = (W_1 + W_2) + \R\cdot p_{r-1,r}^* h.\]
	
	Obviously the pullback of each form on \(\cC_g^{r-2}\) along the composition \(q_{(r-1)}\circ p_{(r-1)} = q_{(r-1)}\circ p_{(r)}\) is an element of \(W_{12}\).
	Conversely, we claim that each form in \(W_{12}\) is the pullback along this composition of some form on \(\cC_g^{r-2}\).
	Indeed, let \(\alpha\in W_{12}\) be any form; we may write \(\alpha = p_{(r)}^* \beta = p_{(r-1)}^*\gamma\) for forms \(\beta,\gamma\in \tR^2(\cC_g^{r-1})\). Let $\mu\in \tR^2(\cC_g)$ be the 2-form given by \(\mu=e^A/(2-2g)\); it follows that \(\int_{\cC_g/\cM_g} \mu = 1\), and by the base change formula we obtain
	\[\int_{p_{(r)}} p_r^*\mu = 1.\] 
	We then find by repeatedly using the base change formula and the projection formula:
	\begin{align*}	
		\beta &= \beta\wedge \int_{p_{(r)}} p_r^*\mu \\&=\int_{p_{(r)}} p_{(r)}^* \beta \wedge p_r^*\mu \\&= \int_{p_{(r)}} p_{(r-1)}^*\gamma\wedge p_r^*\mu \\&= \int_{p_{(r)}}p_{(r-1)}^*(\gamma\wedge p_{r-1}^*\mu) \\&= q_{(r-1)}^*\int_{q_{(r-1)}} \gamma\wedge p_{r-1}^*\mu,
	\end{align*}
	and therefore
	\[\alpha = p_{(r)}^*\beta = p_{(r)}^* q_{(r-1)}^* \int_{q_{(r-1)}} \gamma \wedge p_{r-1}^* \mu,\]
	which proves our claim.

	As pullbacks along tautological submersions are injective, we see that a basis of $ \tR^2(\cC_g^{r-1})$ pulls back to give bases of both $W_1$ and $W_2$, and that a basis of $ \tR^2(\cC_g^{r-2})$ pulls back to give a basis of $W_{12}$. Applying the induction hypothesis and using simple linear algebra we see that $W_1 + W_2$ has a basis consisting of exactly the forms mentioned in the Theorem, except for the form $p_{r-1,r}^* h$.
	
	If we can prove that \(p_{r-1,r}^* h \notin W_1+W_2\) then we may conclude that $\tR^2(\cC_g^r)$ has a basis as given in the Theorem. Suppose, therefore, that \(p_{r-1,r}^* h \in W_1 + W_2\); we can write \(p_{r-1,r}^* h = p_{(r)}^* \alpha + p_{(r-1)}^* \beta\) for some 2-forms \(\alpha,\beta\) on \(\cC_g^{r-1}\). As \(h\) is symmetric, we may even assume with no loss of generality that \(\alpha = \beta\):
	\[p_{r-1,r}^* h = p_{(r)}^* \alpha + p_{(r-1)}^*\alpha.\]
	Consider the map \[f: \cC_g^{r-1} \to \cC_g^r: (x_1,\dots,x_{r-1})\mapsto (x_1,\dots,x_{r-1},x_{r-1});\] this map is a section of both \(p_{(r)}\) and \(p_{(r-1)}\) and fits in a cartesian diagram
	\[
		\begin{tikzcd}
			\cC_g^{r-1} \arrow[r,"f"]\arrow[d,"p_{r-1}"] & \cC_g^r \arrow[d,"p_{r-1,r}"] \\
			\cC_g \arrow[r,"\Delta"] & \cC_g^2.
		\end{tikzcd}
	\]
	We then find:
	\[p_{r-1}^* e^A = p_{r-1}^*\Delta^* h = f^* p_{r-1,r}^* h = 2\alpha;\]
	so \(\alpha = \tfrac12 p_{r-1}^* e^A\), and
	\[p_{r-1,r}^*h = \tfrac12 p_{r-1}^* e^A + \tfrac12 p_r^*e^A \in \tR^2(\cC_g^r).\]
	Integration along the fibers of the morphism \(p_{(r)}: \cC_g^r \to \cC_g^{r-1}\) then yields:
	\[1 = \int_{p_{(r)}} p_{r-1,r}^* h = \int_{p_{(r)}} \tfrac12(p_{r-1}^* e^A + p_r^*e^A) = 0 + \tfrac12(2-2g), \]
	which contradicts with our assumption that \(g\geq 2\). We conclude that \(p_{r-1,r}^*h \notin W_1+W_2 \). 
		The theorem follows by induction.
\end{proof}

\begin{remark} We have observed in the above proof that for high values of $r$, no `new' tautological $2$-forms appear; more precisely, for $r>2$ the space $\tR^2(\cC_g^r)$ is spanned by pullbacks of $2$-forms in $\tR^2(\cC_g^2)$ along tautological submersions. This pattern generalizes to higher cohomological degrees: let \(d\geq 0\) be an integer, then for all \(r>2d\) the space \(\tR^{2d}(\cC_g^r)\) is spanned by pullbacks of tautological \(2d\)-forms on \(\cC_g^{2d}\) along tautological submersions \(\cC_g^r \to \cC_g^{2d}\). We refer to \cite[Section~4.9]{thesis} for more details. 
\end{remark}

\begin{proof}[Proof of Theorem~\ref{thm:ZK_inv}]
It follows from Lemma~\ref{prop:non-zero} that \( \frac{1}{\pi\i} \del\delbar\varphi  \) spans a subspace of  \(I^2(\cM_g)\) of dimension one.
	If \(g=2\) this concludes our proof, since \( \tR^2(\cM_g)\) is one-dimensional by Proposition~\ref{prop:dimR2Mg}. Assume now that \(g\geq 3\). Then by Lemma \ref{prop:some_not_exact} we have that \(I^2(\cM_g)\) is a proper subspace of \(\tR^2(\cM_g)\), and by Proposition~\ref{prop:dimR2Mg} the latter space is two-dimensional.  
	This gives the first part of the theorem. The second part of the theorem is then immediate from the first part and Lemma~\ref{prop:exist_unique_potential}. 
\end{proof}

\begin{remark} An argument very similar to the proof of Theorem~\ref{thm:degree_two} will show that the degree-two part $RH^2(\cC_g^r)$ of the tautological ring in cohomology has basis given by the classes $p_{ij}^*\Delta$ and $p_i^*K$ when $g=2$ and the classes  $p_{ij}^*\Delta$, $p_i^*K$ and $p^*\kappa_1$ when $g \geq 3$. This result is well known. Combined with Theorem~\ref{thm:degree_two} we obtain that for each $r \ge 0$ the canonical surjection $\tR^2(\cC_g^r) \to RH^2(\cC_g^r) \otimes \R$ has one-dimensional kernel. We conclude that in fact $(0) \neq I^2(\cC_g^r) = \R \cdot \frac{1}{\pi\i} \del\delbar\varphi $ for all $r \ge 0$.
\end{remark}

\bibliography{refs}
\bibliographystyle{plain}

\vspace{0.5cm}
\end{document}